\numberwithin{equation}{section}
\DeclareMathOperator{\sign}{sign}
\newtheorem{theorem}{Theorem}{}
\newtheorem{corollary}{Corollary}{}
\newtheorem{definition}{Definition}{}
\newtheorem{remark}{Remark}{}
{}
\newtheorem{proposition}{Proposition}{}
{}
\numberwithin{definition}{section}
\numberwithin{theorem}{section}
\numberwithin{corollary}{section}
\numberwithin{remark}{section}
\numberwithin{lemma}{section}
\numberwithin{proposition}{section}
\numberwithin{equation}{section}
\numberwithin{example}{section}
\newcommand{\thedate}{\today}
\begin{document}

\begin{center}
\Large \bf{Natural and Conjugate Mates of Frenet Curves in Three-Dimensional Lie Group}
\end{center}
\vspace{12pt}
\centerline{\large Mahmut MAK\footnote[1]{ E-mail: mmak@ahievran.edu.tr\hfill\thedate}}
\vspace{12pt}

\centerline{$^{1}$\it K{\i}r\c{s}ehir Ahi Evran University, Faculty of Arts and Sciences, Department of Mathematics, K{\i}r\c{s}ehir, Turkey.}
\vspace{12pt}

\begin{abstract}
In this study, we introduce the natural mate and conjugate mate of a Frenet curve in a three dimensional Lie group $ \mathbb{G} $ with bi-invariant metric. Also, we give some relationships between a Frenet curve and its natural mate or its conjugate mate in  $ \mathbb{G} $. Especially, we obtain some results for the natural mate and the conjugate mate of a Frenet curve in $ \mathbb{G} $ when the Frenet curve is a general helix, a slant helix, a spherical curve, a rectifying curve, a Salkowski (constant curvature and non-constant torsion), anti-Salkowski (non-constant curvature and constant torsion), Bertrand curve. Finally, we give nice graphics with numeric solution in Euclidean 3-space as a commutative Lie group.
\end{abstract}

$~~~${\bf Keywords:} Natural mate,  conjugate mate, helix, slant helix, spherical curve, rectifying curve,\\
\indent{~~~Salkowski curve, anti-Salkowski curve.}\smallskip

$~~~${\bf MSC2010:}$~~$53A04, 53A35, 22E15.

\section{Introduction}
In the theory of curves in differential geometry, "generating a new curve from a regular curve, examining the relationships between them and obtaining new characterizations for them" is always a matter of curiosity and has taken its place among popular topics. In this sense, in the category of curves associated with the Frenet vector fields of regular curves; Bertrand curve, involute-evolute curve, Mannheim curve, principal-direction or binormal-direction curve, and also with respect to their position vector; rectifying curve, osculating curve, normal curve are among the leading examples. These curves and their geometric properties have been studied by many authors in different ambient spaces.

From the fundamental theorem of curves, we have general information about the structure of the curve if the curvatures of any regular curve are known. Therefore, in the theory of curves, "to give the characterization of the curve in terms of curvatures" is another attracted topic. For example; in Euclidean 3-space, for any regular curve with curvature $\kappa $ and torsion $\tau $, the following characterizations are well known:
\begin{itemize}
\item $\kappa =0$ iff it is a straight line,
\item $\tau =0$ iff it is a planar curve,
\item $\tau =0$ and $\kappa = c$ is a non-zero constant iff it is a circle with radius $c^{-1} $ \cite{Carmo},
\end{itemize}
and under assuming that  $\kappa \ne 0$; 
\begin{itemize}
\item $ {\left( {{{\left( {1/\kappa} \right)}^\prime }(1/\tau)} \right)^2} + {\left( {1/\kappa} \right)^2} = {r^2} $ iff it is a spherical curve which is lying on a sphere with radius $ r $ \cite{Carmo},
\item the ratio $(\tau /\kappa )$ is a constant iff it is a general helix. Especially, both of the curvatures are a non-zero constant iff it is a circular helix \cite{Lancret1806,Struik1961},
\item the ratio $(\tau /\kappa) $ is a linear function with respect to arc-length parameter iff it is rectifying curve \cite{chen2003},
\item $ \left( {{\kappa^2}/{{\left( {{\tau^2} + {\kappa^2}} \right)}^{3/2}}} \right){\left( {\tau/\kappa} \right)^\prime } $ is a constant iff it is a slant helix \cite{izumiya},
\item $\kappa $ is a constant but $\tau $ is a non-constant function iff it is a Salkowski curve. Conversely,  $\kappa $ is a non-constant function but $\tau $ is a constant iff it is a anti-Salkowski curve \cite{salkowski,monterde}.   
\end{itemize}

The other versions of the associated curves and the special curves were also defined in various ambient spaces such as Riemannian or Lorentzian space forms. In particularly, these curves and their geometric properties are also studied in three-dimensional Lie groups. In this sense, general helix \cite{ciftci}, slant helix \cite{okuyucu}, rectifying curve \cite{bozkurt} and recently Darboux helix \cite{ozturk} as special curves; Bertrand curve \cite{okuyucu_2016}, Mannheim curve \cite{gok} as mate curves; and also principal-direction curve \cite{kiziltug} as associated curves, are studied in a three-dimensional Lie group with bi-invariant Riemannian metric.

In Euclidean 3-space, Choi and Kim introduced the concept of principal direction curve and binormal direction curve, are defined as the integral curve of principal normal $ N $  and binormal $ B $ of a Frenet curve (i.e. $ \kappa \neq 0 $), respectively \cite{choi}. However, in Euclidean 3-space, Deshmukh et al.\cite{deshmukh} defined a new mate curve which is tangent to the principal normal vector (resp. binormal vector) of the curve, and it is called the natural mate curve (resp. conjugate mate curve). The natural mate curve and conjugate mate curve are same with principal-direction curve and binormal-direction curve from algebraic viewpoint, respectively. But, Desmukh et al. \cite{deshmukh} used the terminology of natural mate or conjugate mate, which is more accurate and comprehensive from geometric viewpoint since the integral curve is defined only for vector fields on a region which containing a curve, not along a curve. This idea be also valid in three-dimensional Lie groups.

In this paper, by using this idea, we defined natural mate and conjugate mate of a Frenet curve in a three-dimensional Lie group $\mathbb{G}$ with bi-invariant Riemannian metric. Moreover, we give some relationships between a Frenet curve and its natural mate. Especially, we obtain new characterizations for the natural mate of a Frenet curve which is a general helix, a slant helix, a spherical curve, a rectifying curve and a curve with constant curvature or torsion in $\mathbb{G}$. However, we get some corollaries for the conjugate mate of a Frenet curve which is a general helix, slant helix in  $ \mathbb{G} $. Finally, we show that a Frenet curve and its conjugate mate curve are Bertrand mate curves and involute-evolute curves.   
\section{Preliminary}
Let $ \mathbb{G} $ be a three dimensional Lie Group with a bi-invariant Riemannian metric $ < , > $ and $\mathfrak{g}$ be the Lie algebra of $\mathbb{G}$, which is consisted of all smooth vector fields of $\mathbb{G}$. Then $\mathfrak{g}$ is isomorphic to $T_e{\mathbb{G}}$, where $e$ is identity of $\mathbb{G}$. Moreover the following equations 
\begin{eqnarray} \label{eq_2.1}
\left\langle {X,[Y,Z]} \right\rangle-\left\langle {[X,Y],Z} \right\rangle=0
\end{eqnarray}
and
\begin{eqnarray} \label{eq_2.2}
\nabla_{X}Y = \dfrac{1}{2}[X,Y]
\end{eqnarray}
are satisfied with respect to bi-invariant metric for all $X, Y,Z \in \mathfrak{g}$, where $\nabla$ is the Levi-Civita connection of Lie group $\mathbb{G}$.

Let $ \gamma:I\subset{\mathbb{R}} \to \mathbb{G} $ be arc-lenghted (unit speed) curve  and $ \{X_{1},X_{2},X_{3}\}$ be an orthonormal basis of $\mathfrak{g}$. $ U = \sum_{i=1}^{3}{{u_{i}}{X_{i}}} $ and $ V = \sum_{j=1}^{3}{{v_{j}}{X_{j}}} $ along the curve $\gamma$ where $u_{i}$ and $v_{j}$ are smooth functions from $ I $ to $\mathbb{R}$. Moreover, Lie bracket of two vector fields $U$ and $ V $ along the curve $\gamma$ is given by 
\begin{eqnarray} \label{eq_2.3}
[U,V] = \sum_{i,j=1}^{3}{{u_{i}}{v_{j}}}[X_{i},X_{j} ].
\end{eqnarray}
Let $\nabla_{T}{U}$ is the covariant derivative of $U$ along the curve $\gamma$ and it is given by 
\begin{eqnarray} \label{eq_2.4}
\nabla_{T} U = U' + \dfrac{1}{2}[T,U] 
\end{eqnarray} 
where $T = \gamma'$ and $U' = \sum_{i=1}^{3}{{u_{i}'}{X_{i}}}$ such that $u_{i}'(t) = \dfrac{du_{i}}{dt} , t\in I$. Note that, if $U$ is the restriction of a left-invariant vector field to $\gamma$, then $U' = 0$.

Let $ \gamma:I\subset{\mathbb{R}} \to \mathbb{G} $ be a unit speed curve with the Frenet-Serret apparatus $ \{T,N,B,\kappa,\tau \}$ such that $\kappa>0$. Then the Frenet-Serret equations of $\gamma$ is given by 
\begin{eqnarray} \label{eq_2.5}
\nabla_{T}{T} = \kappa{N},\, \nabla_{T}{N} = -\kappa{T}+\tau{B},\, \nabla_{T}{B} = - \tau{N},
\end{eqnarray}
where the curvature and torsion of $\gamma$ is 
\begin{eqnarray} \label{eq_2.6}
\kappa = \left\| \nabla_{T}{T} \right\| ,~ \tau = \left\langle \nabla_{T}N,{B} \right\rangle.
\end{eqnarray}
If torsion $ \tau \neq 0$, the curve $\gamma$ is called a Frenet curve in $ \mathbb{G} $. Also, a smooth function $ \tau_\mathbb{G} $ which is called Lie group torsion of $ \gamma $, is given by    
\begin{eqnarray} \label{eq_2.7}
\tau_\mathbb{G}= \dfrac{1}{2}{\left\langle [T,N],B \right\rangle}.
\end{eqnarray}

\begin{proposition}\label{prp2_01}
Let $\gamma:I\subset{\mathbb{R}} \to \mathbb{G}$ be a unit speed curve in $\mathbb{G}$ with Frenet-Serret apparatus $ \{T,N,B,\kappa,\tau \}$. Then the following equations	
\begin{eqnarray} \label{eq_2.8}
\left\{ \begin{array}{l}
[T,N] = \left\langle {[T,N],B} \right\rangle B = 2{\tau_\mathbb{G}}B,\\
{[N,B]} = \left\langle {[N,B],T} \right\rangle T = 2{\tau_\mathbb{G}}T,\\
{[B,T]} = \left\langle {[T,B],N} \right\rangle N =  - 2{\tau_\mathbb{G}}N.
\end{array} \right.
\end{eqnarray}
are satisfied \cite{ciftci,yoon}.
\end{proposition}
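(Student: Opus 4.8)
The plan is to combine two ingredients that are already available in the preliminaries: the compatibility identity \eqref{eq_2.1} of the bi-invariant metric, and the fact that $\{T,N,B\}$ is an orthonormal frame along $\gamma$. The first step I would carry out is the elementary observation that for any $X,Y\in\mathfrak{g}$ the vector $[X,Y]$ is orthogonal to both $X$ and $Y$. Indeed, taking $Z=Y$ in \eqref{eq_2.1} gives $\langle [X,Y],Y\rangle=\langle X,[Y,Y]\rangle=0$, and symmetry of the metric together with \eqref{eq_2.1} applied to the triple $(X,X,Y)$ gives $\langle [X,Y],X\rangle=\langle X,[X,Y]\rangle=\langle [X,X],Y\rangle=0$, since $[Y,Y]=[X,X]=0$.

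Second, I would apply this to the three pairs drawn from the Frenet frame. Since $T,N,B$ span a three-dimensional space at each point of $\gamma$ and $[T,N]$ is orthogonal to $T$ and to $N$, it must be a scalar multiple of $B$; likewise $[N,B]$ is a multiple of $T$ and $[B,T]$ is a multiple of $N$. Hence $[T,N]=\langle [T,N],B\rangle\,B$, $[N,B]=\langle [N,B],T\rangle\,T$, and $[B,T]=\langle [B,T],N\rangle\,N$. The first coefficient equals $2\tau_{\mathbb{G}}$ directly from the definition \eqref{eq_2.7}, which settles the first line.

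Third, I would reduce the two remaining coefficients to this same scalar using \eqref{eq_2.1} (and the antisymmetry of the Lie bracket for the sign in the last line). For instance $\langle [N,B],T\rangle=\langle T,[N,B]\rangle=\langle [T,N],B\rangle=2\tau_{\mathbb{G}}$, giving $[N,B]=2\tau_{\mathbb{G}}T$; and $\langle [B,T],N\rangle=\langle N,[B,T]\rangle=\langle [N,B],T\rangle=2\tau_{\mathbb{G}}$, so that $[B,T]=2\tau_{\mathbb{G}}N$, equivalently $[T,B]=\langle [T,B],N\rangle N=-2\tau_{\mathbb{G}}N$ after using $[T,B]=-[B,T]$.

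I do not expect a genuine obstacle here: the argument is a short structural computation rather than an analytic one. The only point requiring care is the bookkeeping in the third step, namely matching the arguments in \eqref{eq_2.1} and the bracket antisymmetry correctly so that all three mixed inner products $\langle[T,N],B\rangle$, $\langle[N,B],T\rangle$, $\langle[B,T],N\rangle$ collapse to the single quantity $2\tau_{\mathbb{G}}$, and keeping track of the minus sign appearing in the $[T,B]$ relation.
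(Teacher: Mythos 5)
Your argument is correct, and it is worth noting that the paper itself offers no proof of this proposition: it is simply quoted from \cite{ciftci,yoon}. Your derivation is therefore a self-contained justification, and it uses exactly the ingredients one would expect: the ad-invariance identity \eqref{eq_2.1} to show $[X,Y]\perp X$ and $[X,Y]\perp Y$, the orthonormality of the Frenet frame to conclude that each bracket is a multiple of the remaining frame vector, the definition \eqref{eq_2.7} to identify the coefficient $\left\langle [T,N],B\right\rangle=2\tau_\mathbb{G}$, and \eqref{eq_2.1} again to transport this coefficient to the other two brackets. One point your careful bookkeeping actually exposes: the third line of \eqref{eq_2.8} as printed is internally inconsistent, since it equates $[B,T]$ with $\left\langle [T,B],N\right\rangle N=-2\tau_\mathbb{G}N$, whereas your computation gives $\left\langle [B,T],N\right\rangle=2\tau_\mathbb{G}$, i.e. $[B,T]=2\tau_\mathbb{G}N$ and $[T,B]=-2\tau_\mathbb{G}N$; your version is the one consistent with the subsequent equations \eqref{eq_2.9} (one needs $-\tfrac{1}{2}[T,B]=\tau_\mathbb{G}N$ there), so the paper's left-hand bracket in that line should be read as $[T,B]$. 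Apart from flagging that notational slip, there is nothing to add; your proof is complete.
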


Let $ \tau_\mathbb{G} $ be the Lie group torsion of $ \gamma $, which is given by (\ref{eq_2.7}). Then, we easily see that 
\begin{eqnarray} \label{eq_2.9}
\left\{ \begin{array}{l}
{T}'= {\kappa}{N}, \\
{N}'=- {\kappa}{T} + (\tau - \tau_\mathbb{G}){B},\\
{B}'= - (\tau - \tau_\mathbb{G}){N},
\end{array} \right.
\end{eqnarray}
by using the equations (\ref{eq_2.4}), (\ref{eq_2.5}), (\ref{eq_2.7}) and (\ref{eq_2.8}).


\begin{definition} \label{defn_2.2}
Let $\gamma$ be a unit speed curve in $\mathbb{G}$ with Frenet-Serret apparatus $ \{T,N,B,\kappa,\tau \}$. Then, the harmonic curvature function of the curve $\gamma$ is given by 
\begin{eqnarray} \label{eq_2.10}
H = \dfrac{\tau - \tau_\mathbb{G}}{\kappa}
\end{eqnarray}
where $\tau_\mathbb{G}$ is the Lie group torsion of $ \gamma $ \cite{okuyucu}.
\end{definition}

\begin{theorem} \label{teo_2.1}
The curve $\gamma$ is a general helix in $\mathbb{G}$ iff its harmonic function is a constant function \cite{ciftci,okuyucu}.
\end{theorem}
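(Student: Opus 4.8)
The plan is to run the classical Lancret argument inside $\mathbb{G}$, using the ordinary frame derivatives recorded in (2.9) — which are formally the Euclidean Frenet equations with torsion replaced by $\tau-\tau_{\mathbb{G}}$ — together with the remark from the preliminaries that a vector field $U$ along $\gamma$ has $U'=0$ if and only if it has constant components in the fixed orthonormal left-invariant frame $\{X_1,X_2,X_3\}$, i.e. is the restriction of a left-invariant vector field. Throughout I take "general helix in $\mathbb{G}$" in the sense of \cite{ciftci}: there exists a unit left-invariant vector field $X$ (the axis) with $\langle T,X\rangle$ constant; note that on $\{X_i\}$ the metric is the constant identity frame, so $\tfrac{d}{dt}\langle U,V\rangle=\langle U',V\rangle+\langle U,V'\rangle$ for ordinary derivatives.

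For the forward implication I would let $\gamma$ be a general helix with axis $X$ and put $c=\langle T,X\rangle$. Differentiating and using $T'=\kappa N$ with $\kappa>0$ forces $\langle N,X\rangle=0$, so along $\gamma$ one has $X=cT+e\,B$ with $e=\langle B,X\rangle$; since $|X|=1$ and $c$ is constant, $e$ is a constant with $c^{2}+e^{2}=1$. Differentiating $\langle N,X\rangle=0$ and substituting $N'=-\kappa T+(\tau-\tau_{\mathbb{G}})B$ gives $-\kappa c+(\tau-\tau_{\mathbb{G}})e=0$. If $c=0$ then $X=\pm B$, whence $B'=0$ and $\tau-\tau_{\mathbb{G}}\equiv 0$, so $H\equiv 0$; if $c\neq 0$ then necessarily $e\neq 0$ and $H=(\tau-\tau_{\mathbb{G}})/\kappa=c/e$ is constant. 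Either way $H$ is constant.

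For the converse I would assume $H$ constant and exhibit the axis explicitly by setting, along $\gamma$,
\[
X=\frac{H}{\sqrt{1+H^{2}}}\,T+\frac{1}{\sqrt{1+H^{2}}}\,B .
\]
Since the coefficients are constants, $X'=\frac{1}{\sqrt{1+H^{2}}}\bigl(H\kappa-(\tau-\tau_{\mathbb{G}})\bigr)N=0$ by the very definition $H=(\tau-\tau_{\mathbb{G}})/\kappa$. Hence $X$ has constant components, so it is a unit left-invariant vector field, and $\langle T,X\rangle=H/\sqrt{1+H^{2}}$ is constant; thus $\gamma$ is a general helix with $X$ as axis.

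The computations are routine; I expect the only points needing care to be the correct reading of "general helix" in a Lie group (constant angle with a \emph{left-invariant} field, which is what makes $U'=0$ relevant) and the degenerate branch $H\equiv 0$, where the axis degenerates to $B$ and one must check separately that $B$ is then left-invariant. I do not anticipate a substantive obstacle beyond this short case analysis.
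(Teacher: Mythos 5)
The paper does not prove Theorem \ref{teo_2.1} at all; it is quoted from \cite{ciftci,okuyucu} as a preliminary, so there is no internal proof to compare against. Your argument is correct and is essentially the Lancret-type proof of the cited source, only transposed to the primed frame equations (\ref{eq_2.9}): the forward direction (differentiate $\langle T,X\rangle$ and $\langle N,X\rangle$, split off the degenerate branch $c=0$, i.e. $X=\pm B$ and $H\equiv 0$) and the converse (exhibit the axis $X=\bigl(H\,T+B\bigr)/\sqrt{1+H^{2}}$ with $X'=0$) are both sound, and your use of the fact that $U'=0$ in the orthonormal left-invariant frame $\{X_1,X_2,X_3\}$ characterizes restrictions of left-invariant fields is exactly the point that makes the Euclidean argument carry over; in \cite{ciftci} the same computation is phrased with $\nabla_T$ and the bracket identities (\ref{eq_2.1})--(\ref{eq_2.2}) instead of (\ref{eq_2.9}), which is an equivalent bookkeeping choice rather than a different method.
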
 


\begin{theorem} \label{teo_2.2}
The curve $\gamma$ is a slant helix in $\mathbb{G}$  iff the function 
\begin{eqnarray} \label{eq_2.11}
\sigma = \dfrac{\kappa(H^2+1)^{3/2}}{H'}
\end{eqnarray}
is a constant function, where $H$ is harmonic curvature of $\gamma$ \cite{okuyucu}.
\end{theorem}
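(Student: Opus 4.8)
The plan is to adapt the classical Izumiya--Takeuchi argument \cite{izumiya} to the modified Frenet system (\ref{eq_2.9}), which on a Frenet curve in $\mathbb{G}$ plays exactly the role of the Euclidean Frenet--Serret equations with ``torsion'' $\tau-\tau_{\mathbb{G}}=\kappa H$. Two facts from the Preliminary section drive everything: first, $\gamma$ is a slant helix in $\mathbb{G}$ iff there is a unit left-invariant vector field $U$ along $\gamma$ with $\langle N,U\rangle$ constant; second, a vector field $W=\sum w_i X_i$ along $\gamma$ is (the restriction of) a left-invariant field precisely when $W'=0$, and such a $W$ then has constant norm because the metric is bi-invariant. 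I work in the nondegenerate range $H'\neq 0$; where $H'\equiv 0$ the curve is already a general helix by Theorem~\ref{teo_2.1}, the limiting case ``$\sigma\equiv\infty$''.

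For the direct implication, let $U$ be an axis of the slant helix and expand $U=n_1T+n_2N+n_3B$ with $n_i=\langle\,\cdot\,,U\rangle$. Substituting (\ref{eq_2.9}) into $U'=0$ and collecting components gives $n_1'=n_2\kappa$, $n_2'=-n_1\kappa+n_3(\tau-\tau_{\mathbb{G}})$, $n_3'=-n_2(\tau-\tau_{\mathbb{G}})$. The slant-helix hypothesis is $n_2\equiv a$ with $a$ constant; the middle equation then collapses to $n_1=n_3H$, while the first and third become $n_1'=a\kappa$ and $n_3'=-a\kappa H$. Differentiating $n_1=n_3H$ and eliminating $n_1',n_3'$ yields $n_3H'=a\kappa(1+H^2)$, hence, after writing $\kappa(1+H^2)/H'=\sigma(1+H^2)^{-1/2}$, $n_1=\dfrac{a\sigma H}{\sqrt{1+H^2}}$ and $n_3=\dfrac{a\sigma}{\sqrt{1+H^2}}$. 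Imposing $\|U\|^2=n_1^2+n_2^2+n_3^2=1$ one gets $n_1^2+n_3^2=a^2\sigma^2$, so $a^2(\sigma^2+1)=1$. Thus $\sigma^2=(1-a^2)/a^2$ is a finite nonzero constant; being continuous and nowhere zero, $\sigma$ has constant sign, hence $\sigma$ is itself constant.

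For the converse, suppose $\sigma\equiv c_0$ is a nonzero constant, put $a=(c_0^2+1)^{-1/2}$, and define along $\gamma$ the vector field $W=\dfrac{a\sigma H}{\sqrt{1+H^2}}\,T+aN+\dfrac{a\sigma}{\sqrt{1+H^2}}\,B$. Then $\|W\|^2=a^2\sigma^2+a^2=a^2(c_0^2+1)=1$. Differentiating $W$ with (\ref{eq_2.9}) and using $\sigma\equiv c_0$ in the form $H'/(1+H^2)^{3/2}=\kappa/\sigma$, the $T$-coefficient collapses to $n_1'-a\kappa=0$, the $N$-coefficient to $\kappa(n_1-n_3H)=0$, and the $B$-coefficient to $a\kappa H+n_3'=0$; hence $W'=0$. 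Therefore $W$ is a unit left-invariant vector field with $\langle N,W\rangle=a$ constant, i.e.\ $\gamma$ is a slant helix in $\mathbb{G}$.

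The computations are routine once (\ref{eq_2.9}) is in hand; the points that require care are conceptual rather than computational: recognizing the middle component equation as the slant-helix condition $n_1=n_3H$ (not an identity valid for every left-invariant $U$), keeping the degenerate general-helix case $H'\equiv 0$ out of the way, and upgrading $\sigma^2=\mathrm{const}$ to $\sigma=\mathrm{const}$ by continuity. The one structural input peculiar to the Lie-group setting is the characterization of left-invariant fields along $\gamma$ by $W'=0$, which is precisely what lets the whole argument run with the ordinary derivative $'$ and the equations (\ref{eq_2.9}) rather than with $\nabla_T$.
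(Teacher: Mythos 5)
This statement appears in the paper only as a quoted preliminary from \cite{okuyucu}; the paper itself gives no proof, so there is nothing internal to compare against. Your argument is correct and is essentially the standard adaptation of the Izumiya--Takeuchi axis computation that the cited source follows: you work with the modified Frenet system (\ref{eq_2.9}), characterize restrictions of left-invariant fields along $\gamma$ by $W'=0$, and use the definition of a slant helix in $\mathbb{G}$ as a curve whose principal normal has constant inner product with a unit left-invariant field. The computations check out: the middle component equation does collapse to $n_1=n_3H$ under $n_2\equiv a$, the relation $n_3H'=a\kappa(1+H^2)$ and the normalization $a^2(\sigma^2+1)=1$ are right, and in the converse direction $W'=0$ follows exactly as you say from $H'(1+H^2)^{-3/2}=\kappa/\sigma$. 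Two small caveats. First, the slant-helix definition you invoke is not actually stated in the paper's Preliminary --- the paper never defines slant helix, so you should cite it explicitly as the definition from \cite{okuyucu} rather than as a ``fact from the Preliminary'' (likewise, the paper only states the implication ``left-invariant $\Rightarrow$ $U'=0$''; the converse you use is immediate from constancy of the coefficients in the left-invariant orthonormal frame, but deserves the one-line justification you gesture at). Second, the upgrade from $\sigma^2=\mathrm{const}$ to $\sigma=\mathrm{const}$ by continuity requires the set where $H'\neq 0$ (where $\sigma$ is defined) to be connected; this is the same tacit assumption made in the Euclidean argument and in the statement of the theorem itself, but it is worth flagging rather than leaving implicit.
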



\begin{theorem} \label{teo_2.3}
	Let $\gamma$ be a unit speed curve in $\mathbb{G}$ with Frenet-Serret apparatus $ \{T,N,B,\kappa,\tau\}$. Then $\gamma$ is a rectifying curve iff the harmonic function $H$ is linear function of are length parameter $s$ of $\gamma$ (i.e. $H(s) = as + b$, where $a \neq 0$ and b are constant) \cite{bozkurt}.
\end{theorem}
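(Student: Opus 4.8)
\emph{Proof proposal.} The plan is to transport C.\ Chen's Euclidean argument \cite{chen2003} into $\mathbb{G}$ by means of the ordinary derivative formulas (\ref{eq_2.9}). The crucial structural observation is that, with respect to arc length, the frame $\{T,N,B\}$ obeys a Frenet-type system in which the role of the torsion is played by $\tau-\tau_\mathbb{G}=\kappa H$; consequently the harmonic curvature $H$ takes over the role that $\tau/\kappa$ plays in $\mathbb{R}^3$, and every step of Chen's computation goes through with $\tau/\kappa$ replaced by $H$. Throughout I use the characterization of a rectifying curve by the requirement that its position vector lie in the rectifying plane $\mathrm{span}\{T,B\}$, i.e.\ that there exist smooth functions $\lambda,\mu$ on $I$ with $\gamma=\lambda T+\mu B$; this is the description adopted in \cite{bozkurt}, and it is legitimate in $\mathbb{G}$ because, for a bi-invariant metric, left translations are isometries preserving the Levi-Civita connection and hence the entire Frenet apparatus.

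For the forward implication, I would assume $\gamma=\lambda T+\mu B$, differentiate with respect to arc length using $\gamma'=T$ and (\ref{eq_2.9}), and then equate coefficients of the linearly independent frame $\{T,N,B\}$. This is expected to yield $\lambda'=1$, $\mu'=0$ and $\lambda\kappa=\mu(\tau-\tau_\mathbb{G})$. The first two relations give $\lambda(s)=s+c$ and $\mu\equiv b_{1}$ for constants $c,b_{1}$; moreover $b_{1}\neq0$, since otherwise $\lambda\kappa\equiv0$ with $\kappa>0$ would force $\lambda\equiv0$ and collapse $\gamma$ to a point. Dividing the third relation by $\kappa\mu$ then produces $H=(\tau-\tau_\mathbb{G})/\kappa=\lambda/\mu=(s+c)/b_{1}$, a linear function of $s$ with nonzero leading coefficient $a=1/b_{1}$.

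For the converse, suppose $H(s)=as+b$ with $a\neq0$. I would set $\mu=1/a$, $\lambda(s)=s+b/a$, and consider $Y:=\gamma-\lambda T-\mu B$. Differentiating and invoking (\ref{eq_2.9}) together with the identity $\mu(\tau-\tau_\mathbb{G})=\mu\kappa H=\kappa(s+b/a)=\lambda\kappa$, every term should cancel, so that $Y'\equiv0$. Hence $Y$ is a constant vector; composing $\gamma$ with the corresponding translation (an isometry of $\mathbb{G}$ carrying the Frenet apparatus along) lets me take $Y\equiv0$, whence $\gamma=\lambda T+\mu B$ lies in the rectifying plane and $\gamma$ is a rectifying curve.

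The calculations are routine; the two points that need care are making precise the position-vector formalism in $\mathbb{G}$ — for which I would lean on the set-up of \cite{bozkurt} and the isometry property of left translations — and, in the converse, promoting the infinitesimal statement $Y'\equiv0$ to the geometric conclusion by absorbing the constant of integration into a translation, where the hypothesis $a\neq0$ is exactly what makes $\mu=1/a$ admissible. I would also record that the degenerate possibilities ($\mu\equiv0$, or $\kappa$ vanishing) are excluded by the standing hypotheses $\kappa>0$, $\tau\neq0$ and the fact that $\gamma$ is nonconstant, so that $H$ and its linear form are well defined.
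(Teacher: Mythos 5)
First, a remark on the target: the paper itself gives no proof of Theorem \ref{teo_2.3} — it is quoted from \cite{bozkurt} as a preliminary — so your proposal can only be compared with the approach of that source. Your structural idea is the right one, and it is essentially how the result is obtained there: by (\ref{eq_2.9}) the frame obeys a Frenet-type system in which $\tau-\tau_\mathbb{G}=\kappa H$ plays the role of the torsion, so Chen's Euclidean computation carries over with $\tau/\kappa$ replaced by $H$; your coefficient equations $\lambda'=1$, $\mu'=0$, $\lambda\kappa=\mu(\tau-\tau_\mathbb{G})$, the exclusion of $\mu\equiv 0$ via $\kappa>0$, and the converse cancellation are all correct as computations.

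The genuine gap is in the object you differentiate. In a non-abelian $\mathbb{G}$ (e.g.\ $\mathbb{S}^3$ or $\mathrm{SO}(3)$, which Remark \ref{rmrk_2.2} explicitly includes) the identity $\gamma=\lambda T+\mu B$ is meaningless: $\gamma(s)$ is a group element, not a tangent vector, and there is no ambient linear structure in which a point equals a combination of frame vectors. The appeal to left translations being isometries does not repair this — isometries preserve the Frenet apparatus but do not manufacture a position vector. The correct device, which the paper itself already uses for spherical curves (Definition \ref{defn_2.9}, Proposition \ref{prp_2.01}, Definition \ref{defn_2.10}) and which underlies \cite{bozkurt}, is the left shift $\alpha:I\to\mathfrak{g}$ with $\alpha'(s)=dL_{\gamma^{-1}(s)}\gamma'(s)$: the rectifying condition is that $\alpha(s)$ lies in $\mathrm{span}\{T(s),B(s)\}$, all vectors being carried to $\mathfrak{g}$ by left translation. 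With this replacement your argument becomes verbatim correct, because the components $u_i$ in (\ref{eq_2.4}) are exactly the $\mathfrak{g}$-components of the translated frame, so the ordinary derivative of that frame is precisely (\ref{eq_2.9}) and $\alpha'=T$; in the converse the constant vector $Y\in\mathfrak{g}$ is absorbed not by translating $\gamma$ but simply through the freedom in the initial condition of $\alpha$ (Proposition \ref{prp_2.01} gives uniqueness only up to initial conditions), and the hypothesis $a\neq 0$ is indeed what makes the choice $\mu=1/a$ admissible. In short: right computation, but the rectifying condition must be formulated on the left shift in $\mathfrak{g}$, not on $\gamma$ itself.
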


\begin{definition} \label{defn_2.4}
Let $\gamma$ be a unit speed curve with the Frenet-Serret apparatus $ \{T,N,B,\kappa,\tau \}$. Then the Darboux vector of the curve $\gamma$ is given by 
\begin{eqnarray} \label{eq_2.12}
{D} = \tau{T} + \kappa{B},
\end{eqnarray}
which is satisfying the following equations  
\begin{eqnarray} \label{eq_2.13}
\nabla_{T}{T} = {{D}} \times {T} ,~\nabla_{T}{N} = {{D}} \times {N},~ \nabla_{T}{B} = {{D}} \times {B}.
\end{eqnarray}
where "$ \times $" is (natural) cross product in three-dimensional Lie algebra $ \mathfrak{g} $. Moreover, we define the vector field is given by 
\begin{eqnarray} \label{eq_2.14}
\Omega = (\tau - \tau_\mathbb{G}){T} + {\kappa}{B},
\end{eqnarray}
which is satisfying the following equations
\begin{eqnarray} \label{eq_2.15}
T' = {\Omega} \times {T} ,~ N' = {\Omega} \times {N} , ~B' = {\Omega} \times {B}.
\end{eqnarray}
Hence $\Omega$ is called extrinsic Darboux vector of the curve $\gamma$ with respect to usual derivative and its length is 
\begin{eqnarray} \label{eq_2.16}
{\omega} = \sqrt{(\tau - \tau_\mathbb{G})^2 + {\kappa}^2}.
\end{eqnarray}
\end{definition}

\begin{definition} \label{defn_2.8}
The extrinsic co-Darboux vector of the curve $\gamma$ is given by 
\begin{eqnarray} \label{eq_2.17}
\Omega^* = -{\kappa}{T} + (\tau - \tau_\mathbb{G}){B}.
\end{eqnarray}
It is easily seen that $\Omega^*$ corresponds to usual derivative ($\,\,'$) of the principal normal of the curve $\gamma$. That is, $\Omega^* = N'$.
\end{definition}

\begin{definition} \label{defn_2.9}
Let $\gamma:I\subset{\mathbb{R}} \to \mathbb{G}$ be an arc length parametrized curve, then a curve $\alpha:I\subset{\mathbb{R}} \to \mathfrak{g}$ where $\mathfrak{g}$ is the Lie algebra of $\mathbb{G}$, for which ${\alpha}'(s) = dL_{{\gamma}^{-1}(s)}{\gamma}'(s)$ for all $s \in I$ is called the left shift of $\gamma$ \cite{ciftci}.
\end{definition}

\begin{remark} \label{rmrk_2.01}
A left shift is a canonical map from the tangent space of $\mathbb{G}$ to the Lie algebra $\mathfrak{g}$ \cite{fokas}.
\end{remark}

\begin{proposition} \label{prp_2.01}
Let $\gamma:I\subset{\mathbb{R}} \to \mathbb{G}$ be a curve, then there exists a left shift (unique up to initial conditions) $\alpha:I\subset{\mathbb{R}} \to \mathfrak{g}$ \cite{ciftci}.
\end{proposition}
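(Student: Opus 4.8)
The plan is to recognize that the defining relation ${\alpha}'(s) = dL_{{\gamma}^{-1}(s)}{\gamma}'(s)$ is not a genuine differential equation in ${\alpha}$ at all: the right-hand side depends only on ${\gamma}$, so the assertion reduces to antidifferentiating a smooth ${\mathfrak{g}}$-valued function on the interval $I$.

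First I would set ${\beta}:I\to{\mathfrak{g}}$ by ${\beta}(s):=dL_{{\gamma}^{-1}(s)}{\gamma}'(s)$ and check that ${\beta}$ is smooth. This follows from the Lie group axioms: inversion $g\mapsto g^{-1}$ and multiplication $\mathbb{G}\times\mathbb{G}\to\mathbb{G}$ are smooth, hence $(g,v)\mapsto dL_g v$ is a smooth map $T\mathbb{G}\to T\mathbb{G}$; since $s\mapsto({\gamma}^{-1}(s),{\gamma}'(s))$ is a smooth curve in $T\mathbb{G}$, the composition ${\beta}$ is smooth, and by left-invariance it takes values in $T_e\mathbb{G}\cong{\mathfrak{g}}$.

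Next, fixing any $s_0\in I$ and any $A_0\in{\mathfrak{g}}$, I would define
\[
{\alpha}(s):=A_0+\int_{s_0}^{s}{\beta}(u)\,du,
\]
the integral taken component-wise with respect to a basis $\{X_1,X_2,X_3\}$ of the vector space ${\mathfrak{g}}$. By the fundamental theorem of calculus ${\alpha}$ is smooth with ${\alpha}'(s)={\beta}(s)=dL_{{\gamma}^{-1}(s)}{\gamma}'(s)$ for all $s\in I$, so ${\alpha}$ is a left shift of ${\gamma}$ with prescribed value ${\alpha}(s_0)=A_0$. For uniqueness up to initial conditions, if ${\alpha}_1,{\alpha}_2$ are both left shifts of ${\gamma}$ then $({\alpha}_1-{\alpha}_2)'\equiv 0$ on the interval $I$, so ${\alpha}_1-{\alpha}_2$ is a constant vector in ${\mathfrak{g}}$; thus any two left shifts differ only by their value at a single point. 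There is no substantive obstacle in this argument; the only step that genuinely uses the ambient structure is the smoothness of ${\beta}$, which is precisely where the smoothness of inversion and left translation in $\mathbb{G}$ enters.
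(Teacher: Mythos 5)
Your argument is correct and is exactly the standard one behind the cited result: the defining relation prescribes $\alpha'$ as a smooth $\mathfrak{g}$-valued function of $s$, so existence follows by componentwise integration and uniqueness up to the choice of $\alpha(s_0)$ because two solutions differ by a constant in the vector space $\mathfrak{g}$. The paper itself gives no proof (it cites \c{C}ift\c{c}i), and your write-up matches the intended reasoning, including the only nontrivial point, namely smoothness of $s\mapsto dL_{\gamma^{-1}(s)}\gamma'(s)$ via smoothness of inversion and left translation.
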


\begin{definition} \label{defn_2.10}
The curve $\gamma$ is called a spherical curve in $ \mathbb{G} $ if the left shift $\alpha$ of $ \gamma $ lies on the unit central sphere in $\mathfrak{g}$ (i.e. $\left\langle \alpha(t),\alpha(t) \right\rangle = 1$ for all $t \in I$) \cite{ciftci}.
\end{definition}

By using equation (\ref{eq_2.9}) and Definition \ref{defn_2.10}, we get easily the following characterization of spherical curves in $ \mathbb{G}$. 

\begin{theorem} \label{teo_2.4}
 Let $\gamma$ be a unit speed Frenet curve in $\mathbb{G}$ with Frenet-Serret apparatus $ \{T,N,B,\kappa,\tau \}$. Then $\gamma$ is a spherical curve whose left shift is lying a sphere with radius $ r $ if and only if the following equations satisfy:
 \begin{enumerate}[label={\upshape(\roman*)}, align=left, widest=i, leftmargin=5pt]\vspace{-5pt}
 	\item if $ {\tau-{\tau_\mathbb{G}}} = 0$, then $ \kappa=1/r $,
 	\item if $ {\tau-{\tau_\mathbb{G}}} \neq 0$, then
 \begin{eqnarray}\label{eq_2.18}
 \left({\left(\dfrac{1}{\kappa}\right)'{\dfrac{1}{\tau - \tau_\mathbb{G}}}}\right)^2 + \left(\dfrac{1}{\kappa}\right)^2 = r^2,
\end{eqnarray}
or equivalently,
\begin{eqnarray}\label{eq_2.19}
{\left( {{{\left( {\frac{1}{\kappa }} \right)}^\prime }\frac{1}{{\tau  - {\tau _\mathbb{G}}}}} \right)^\prime } + H = 0.
\end{eqnarray}
where $H$ is harmonic curvature of $\gamma$.
\end{enumerate}\vspace{-5pt}

\end{theorem}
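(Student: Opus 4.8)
The plan is to push the whole problem down to the Lie algebra $\mathfrak g\cong\mathbb R^{3}$ via the left shift, where it becomes the classical characterization of spherical space curves, the only change being that the role of the torsion is played by $\tau-\tau_{\mathbb G}$. Let $\alpha$ be the left shift of $\gamma$ and let $\widetilde T,\widetilde N,\widetilde B$ denote the left shifts of $T,N,B$, i.e. $\widetilde X(s)=dL_{\gamma^{-1}(s)}X(s)$. Because the metric is bi-invariant each $dL_{\gamma^{-1}(s)}$ is a linear isometry, so $\{\widetilde T,\widetilde N,\widetilde B\}$ is an orthonormal frame in $\mathfrak g$; moreover, writing a vector field along $\gamma$ in the left-invariant orthonormal basis $\{X_{1},X_{2},X_{3}\}$ shows that the left shift of $U'$ equals the ordinary (vector-space) derivative of the left shift of $U$ — this is precisely the content of the remark that $U'=0$ for left-invariant $U$. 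Hence $\alpha'=\widetilde T$ and, by (\ref{eq_2.9}),
\begin{eqnarray*}
\widetilde T{}'=\kappa\,\widetilde N,\qquad \widetilde N{}'=-\kappa\,\widetilde T+(\tau-\tau_{\mathbb G})\widetilde B,\qquad \widetilde B{}'=-(\tau-\tau_{\mathbb G})\widetilde N .
\end{eqnarray*}
So $\alpha$ is a unit-speed curve in $\mathbb R^{3}$ with Frenet curvature $\kappa>0$ and torsion $\tau-\tau_{\mathbb G}$, and by Definition~\ref{defn_2.10} the statement ``$\gamma$ is spherical with left shift on a sphere of radius $r$'' just means ``$\alpha$ lies on a $2$-sphere of radius $r$ in $\mathfrak g$''. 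It therefore suffices to prove the classical result for $\alpha$ with $(\kappa,\tau-\tau_{\mathbb G})$ in place of $(\kappa,\tau)$.

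\textbf{Case (i): $\tau-\tau_{\mathbb G}\equiv 0$.} Then $\widetilde B{}'=0$, so $\widetilde B$ is a fixed unit vector and $\langle\alpha,\widetilde B\rangle'=\langle\widetilde T,\widetilde B\rangle=0$, i.e. $\alpha$ lies in an affine plane $\Pi$. A curve lying in $\Pi$ and on a sphere of radius $r$ lies on the circle $\Pi\cap S$, so it has constant curvature equal to the reciprocal of that circle's radius; conversely a planar curve with constant curvature $\kappa=1/r$ is a circle of radius $r$ and thus lies on a sphere of radius $r$. This gives the equivalence with $\kappa=1/r$.

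\textbf{Case (ii): $\tau-\tau_{\mathbb G}\neq 0$.} For the forward direction, suppose $\alpha$ lies on the sphere with centre $m$ and radius $r$, and set $\vec p=\alpha-m$. Differentiating $\langle\vec p,\vec p\rangle=r^{2}$ three times in succession and using the transported Frenet equations yields, in order, $\langle\vec p,\widetilde T\rangle=0$, then $\langle\vec p,\widetilde N\rangle=-1/\kappa$, then $\langle\vec p,\widetilde B\rangle=-(1/\kappa)'/(\tau-\tau_{\mathbb G})$; substituting these three components back into $\langle\vec p,\vec p\rangle=r^{2}$ gives (\ref{eq_2.18}). For the converse, define
\[
m=\alpha+\frac{1}{\kappa}\,\widetilde N+\frac{(1/\kappa)'}{\tau-\tau_{\mathbb G}}\,\widetilde B .
\]
A direct differentiation with the Frenet equations makes all terms cancel except one, leaving $m'=-\big(\big[(1/\kappa)'/(\tau-\tau_{\mathbb G})\big]'+H\big)\widetilde B$, so $m'=0$ exactly when (\ref{eq_2.19}) holds; then $m$ is constant and $\langle\alpha-m,\alpha-m\rangle$ is constant (its derivative is $2\langle\alpha-m,\widetilde T\rangle=0$), equal to $r^{2}$ by (\ref{eq_2.18}). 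Finally, the equivalence of (\ref{eq_2.18}) and (\ref{eq_2.19}) comes from differentiating the left side of (\ref{eq_2.18}) and observing it factors as $\dfrac{2(1/\kappa)'}{\tau-\tau_{\mathbb G}}\Big(\big[(1/\kappa)'/(\tau-\tau_{\mathbb G})\big]'+H\Big)$, so (\ref{eq_2.19}) forces the left side of (\ref{eq_2.18}) to be a constant, which we name $r^{2}$.

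The single genuinely new ingredient is the first paragraph: verifying that the left shift intertwines the operator $'$ with the ordinary derivative on the vector space $\mathfrak g$, so that (\ref{eq_2.9}) becomes a bona fide Frenet system in $\mathbb R^{3}$; once this is in place the argument is the standard do~Carmo-type computation. The only points needing a word of care are the degenerate sub-case $(1/\kappa)'\equiv 0$ (constant $\kappa$) when passing between (\ref{eq_2.18}) and (\ref{eq_2.19}) — where one notes that constant $\kappa$ together with $\tau-\tau_{\mathbb G}\neq 0$ is incompatible with $\vec p$ having the form above — and the interpretation of ``radius $r$'' in case (i).
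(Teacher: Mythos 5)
Your proposal is correct and takes essentially the approach the paper intends: the paper gives no proof of this theorem, asserting it follows "by using equation (\ref{eq_2.9}) and Definition \ref{defn_2.10}", which is exactly your reduction — transport everything to $\mathfrak{g}$ by the left shift, observe that (\ref{eq_2.9}) becomes a genuine Frenet system with $\tau-\tau_{\mathbb{G}}$ playing the role of torsion, and run the classical do Carmo spherical-curve computation. Your explicit check that the left shift intertwines the derivative $'$ with the ordinary derivative in $\mathfrak{g}$ (and your flagged caveats about constant $\kappa$ and case (i)) simply supply details the paper leaves implicit.
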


\begin{remark} \label{rmrk_2.2}
Under special cases, it is known that a three dimensional Lie group $\mathbb{G}$ with bi-invariant metric contains $ \mathbb{S}^1\times \mathbb{S}^1 \times \mathbb{S}^1$, $ \mathbb{S}^1\times \mathbb{S}^1 \times \mathbb{R}$, $ \mathbb{S}^1 \times \mathbb{R}^2 $, $ \mathbb{R}^3 $ as a commutative group, and also 3-dimensional unit sphere $ \mathbb{S}^3 $ (or special unitary group $ \rm{SU}(2) $), 3-dimensional special orthogonal group $ \rm{SO}(3)$. Especially, $\mathbb{G}$ is a commutative group,  $ \mathbb{S}^3 $ or $ \rm{SO}(3)$ when $ \tau_\mathbb{G} = 0, 1$ or $\frac{1}{2} $, respectively \cite{esprito,ciftci}. Thus, we say that a three dimensional Lie group $\mathbb{G}$ with bi-invariant metric has a very rich structure and also, the following results are extended versions of the results which is given in \cite{deshmukh}.
\end{remark}

\section{Natural mates of Frenet curves in $ \mathbb{G} $}
In this section, we introduce natural mate of a Frenet curve in three dimensional Lie group $\mathbb{G}$ with bi-invariant metric. Also, we give some relationship between Frenet curve and its natural mates. Moreover, we obtain some results for the natural mate of a Frenet curve which is especially a general helix, a slant helix, a spherical curve or a curve with constant curvature. 

\begin{definition} \label{defn_3.1}
Let $\gamma:I\subset{\mathbb{R}} \to \mathbb{G}$ be a unit speed Frenet curve in $\mathbb{G}$ with the Frenet-Serret apparatus $ \{T,N,B,\kappa,\tau \}$. A unit speed curve $\beta:I\subset{\mathbb{R}} \to \mathbb{G}$ with Frenet-Serret apparatus $ \left\{ {\overline T ,\overline N ,\overline B ,\overline \kappa ,\overline \tau } \right\} $, is called the natural mate of the curve $\gamma$ if the curve $\beta$ is tangent to the principal normal vector of the curve $\gamma$ \mbox{(i.e. $\overline{T} = N$)}. 
\end{definition}

\begin{remark}\label{rmrk_3.1}
It is easily seen that the natural mate  curve of the Frenet curve $ \gamma $ is given by integral of principal normal $ N $  from Definition \ref{defn_3.1}. So, natural mate curve is same from algebraic viewpoint with principal-direction curve in \cite{kiziltug}. But natural mate curve is different from geometric viewpoint since it is defined as along the Frenet curve $ \gamma $ in three-dimensional Lie group $\mathbb{G}$.
\end{remark}

Now, let $ \gamma:I\subset{\mathbb{R}} \to \mathbb{G} $ be a unit speed Frenet curve in $\mathbb{G}$ with the Frenet-Serret apparatus $ \{T,N,B,\kappa,\tau \}$. Then, it is easily seen that $ \left\{ {N,\frac{{{\Omega^*}}}{\omega},\frac{\Omega}{\omega}} \right\} $ is an orthonormal basis in Lie algebra $ \mathfrak{g} $ of $ \mathbb{G} $ along the curve $\gamma$, where $ \Omega,\omega,\Omega^* $ are defined by (\ref{eq_2.14}), (\ref{eq_2.16}), (\ref{eq_2.17}), respectively. Now, by using (\ref{eq_2.10}), (\ref{eq_2.11}), (\ref{eq_2.16}), we get 
\begin{eqnarray*}
	{\left( {\frac{\kappa}{\omega}} \right)^\prime } =  - \frac{{\tau - {\tau_\mathbb{G}}}}{\sigma},~{\left( {\frac{{\tau  - {\tau _\mathbb{G}}}}{\omega}} \right)^\prime} = \frac{\kappa }{\sigma }.
\end{eqnarray*}
Also, by using equations (\ref{eq_2.9}) with these functions, we have
\begin{eqnarray}\label{eq_3.1}
\left[ {\begin{array}{*{20}{c}}
	N'\\
	{{{\left( {\frac{{{\Omega^*}}}{\omega}} \right)}^\prime }}\\
	{{{\left( {\frac{\Omega}{\omega}} \right)}^\prime }}
	\end{array}} \right] = \left[ {\begin{array}{*{20}{c}}
	0&\omega&0\\
	{ - \omega}&0&{\frac{\omega}{\sigma}}\\
	0&{ - \frac{\omega}{\sigma}}&0
	\end{array}} \right]\left[ {\begin{array}{*{20}{c}}
	N\\
	{\left( {\frac{{{\Omega^*}}}{\omega}} \right)}\\
	{\left( {\frac{\Omega}{\omega}} \right)}
	\end{array}} \right].
\end{eqnarray}
Moreover, equations (\ref{eq_3.1}) means that there exists a unit speed Frenet curve $ \beta $  in $\mathbb{G}$ with Frenet-Serret apparatus $ \{\overline{T},\overline{N},\overline{B},\overline{\kappa},\overline{\tau} \}$ which is satisfying the following equations 
\begin{eqnarray} \label{eq_3.2}
\left\{ \begin{array}{l}
\overline{T} = N,~\overline{N} = ({\Omega^*}/{\omega}),~\overline B = ({\Omega}/{\omega}),\\
\overline{\kappa} = {\omega}=\kappa \sqrt{1+H^2},\\
{ \overline{\tau} - \,\overline{\tau_\mathbb{G}}}= ({\omega}/{\sigma})={H'}/({1+H^2}),
\end{array} \right.  
\end{eqnarray}
where $\overline{\tau_\mathbb{G}} = \dfrac{1}{2}\left\langle [\overline{T},\overline{N}],\overline{B} \right\rangle$. Thus, we have $ \overline{T}=N $ along the curve $ \gamma $ by equations (\ref{eq_3.2}).
That is, the curve $ \beta $ is natural mate of the curve $ \gamma $ by Definition \ref{defn_3.1}. Then, we obtain the following theorem for natural mate curve of a Frenet curve in $ \mathbb{G} $.

\begin{theorem} \label{teo_3.1}
Let $\gamma$ be a unit speed Frenet curve in $\mathbb{G}$ with Frenet-Serret apparatus $ \{T,N,B,\kappa,\tau \}$. Then $\beta$ is natural mate of the curve $\gamma$ iff there exists a unit speed curve $\beta$ in $\mathbb{G}$ with Frenet-Serret apparatus $ \{\overline{T},\overline{N},\overline{B},\overline{\kappa},\overline{\tau} \}$ which is given by the equations  (\ref{eq_3.2}), where $ \Omega,\omega,\Omega^* $ are defined by (\ref{eq_2.14}), (\ref{eq_2.16}), (\ref{eq_2.17}), respectively and $\overline{\tau_\mathbb{G}} = \dfrac{1}{2}\left\langle [\overline{T},\overline{N}],\overline{B} \right\rangle$.
\end{theorem}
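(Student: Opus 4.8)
The plan is to prove the two implications of the "iff" separately, with essentially all of the work concentrated in the forward direction; most of it has in fact already been carried out in the computation leading up to the statement, so the proof will largely consist of organizing that computation.

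The reverse implication is definitional. If $\beta$ is a unit speed curve whose Frenet--Serret data satisfy (\ref{eq_3.2}), then in particular $\overline{T} = N$ along $\gamma$, so $\beta$ is tangent to the principal normal of $\gamma$ and hence is a natural mate of $\gamma$ by Definition \ref{defn_3.1}. Nothing further is required.

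For the forward implication, suppose $\beta$ is a natural mate of $\gamma$, so that $\overline{T} = \beta' = N$. Since $\|N\| = 1$, the curve $\beta$ is automatically unit speed, and its Frenet apparatus obeys the ``primed'' equations (\ref{eq_2.9}). First I would record that $\left\{ N, \Omega^*/\omega, \Omega/\omega \right\}$ is a positively oriented orthonormal frame along $\gamma$: orthonormality follows from $\Omega = (\tau-\tau_\mathbb{G})T + \kappa B$ and $\Omega^* = -\kappa T + (\tau-\tau_\mathbb{G})B$ together with $\|\Omega\| = \|\Omega^*\| = \omega$ from (\ref{eq_2.16}), while $N \times (\Omega^*/\omega) = \Omega/\omega$ is checked using the cross-product relations $T \times N = B$, $N \times B = T$, $B \times T = N$ for the Frenet frame. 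The key step is then to differentiate the three frame vectors. Using (\ref{eq_2.9}) one gets $N' = \Omega^*$ at once (this is also Definition \ref{defn_2.8}); for the remaining two one needs the identities
\begin{eqnarray*}
\left( \frac{\kappa}{\omega} \right)' = -\frac{\tau - \tau_\mathbb{G}}{\sigma}, \qquad \left( \frac{\tau - \tau_\mathbb{G}}{\omega} \right)' = \frac{\kappa}{\sigma},
\end{eqnarray*}
which follow from $\omega^2 = \kappa^2 + (\tau-\tau_\mathbb{G})^2$, the definition $H = (\tau-\tau_\mathbb{G})/\kappa$, and the observation that $1 + H^2 = \omega^2/\kappa^2$ turns (\ref{eq_2.11}) into $\sigma = \omega^3/(\kappa^2 H')$. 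Substituting these identities, along with $T' = \kappa N$ and $B' = -(\tau-\tau_\mathbb{G})N$, and checking that the $N$-components cancel, produces exactly the skew-symmetric system (\ref{eq_3.1}).

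It then remains to read off the Frenet--Serret apparatus of $\beta$ from (\ref{eq_3.1}). Comparing its first row with $\overline{T}' = \overline{\kappa}\,\overline{N}$ and using $\overline{\kappa} > 0$, $\|\overline{N}\| = 1$ gives $\overline{\kappa} = \omega$ and $\overline{N} = \Omega^*/\omega$; then $\overline{B} = \overline{T} \times \overline{N} = \Omega/\omega$; and comparing the third row with $\overline{B}' = -(\overline{\tau} - \overline{\tau_\mathbb{G}})\,\overline{N}$ gives $\overline{\tau} - \overline{\tau_\mathbb{G}} = \omega/\sigma$. Rewriting $\omega = \kappa\sqrt{1+H^2}$ and $\omega/\sigma = H'/(1+H^2)$ through the relations above puts everything in the form (\ref{eq_3.2}); note also $\overline{\kappa} = \omega \ge \kappa > 0$, so $\beta$ genuinely has a well-defined Frenet frame. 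I expect the main obstacle to be precisely the middle step: establishing the derivative identities for $\kappa/\omega$ and $(\tau-\tau_\mathbb{G})/\omega$ and confirming that the cross terms cancel so that (\ref{eq_3.1}) has the clean form claimed — though this is a routine computation once $\sigma$ is expressed through $\omega$, $\kappa$, and $H'$.
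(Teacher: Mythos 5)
Your proposal is correct and takes essentially the same route as the paper: the paper's own proof of Theorem \ref{teo_3.1} is exactly the computation displayed just before it (the orthonormal frame $\left\{ N,\Omega^*/\omega,\Omega/\omega\right\}$, the derivative identities $\left(\kappa/\omega\right)'=-(\tau-\tau_\mathbb{G})/\sigma$ and $\left((\tau-\tau_\mathbb{G})/\omega\right)'=\kappa/\sigma$, the skew-symmetric system (\ref{eq_3.1}), and reading off (\ref{eq_3.2})), with the reverse implication being definitional via $\overline{T}=N$. Your only difference is organizational, in spelling out the two directions of the equivalence explicitly.
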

   
By using Theorem \ref{teo_3.1}, we get easily following corollaries for natural mate $\beta$ of a Frenet curve $\gamma$ which is a general helix or slant helix.   
   
\begin{corollary} \label{crl_3.1}
Let $\gamma$ be a unit speed Frenet curve in $\mathbb{G}$. Then $\gamma$ is a general helix if and only if the torsion $\overline{\tau}$ of natural mate $\beta$ of the curve $\gamma$ satisfies $\overline{\tau} - \overline{\tau_\mathbb{G}} = 0$ where $\overline{\tau_\mathbb{G}} = \dfrac{1}{2}\left\langle [\overline{T},\overline{N}],\overline{B} \right\rangle$.
\end{corollary}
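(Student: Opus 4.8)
The plan is to read the conclusion directly off Theorem~\ref{teo_3.1} together with the helix characterization in Theorem~\ref{teo_2.1}; no new computation is really required. First I would invoke the third line of the system~(\ref{eq_3.2}), which expresses the quantity $\overline{\tau}-\overline{\tau_\mathbb{G}}$ attached to the natural mate $\beta$ entirely in terms of the harmonic curvature $H$ of $\gamma$, namely $\overline{\tau}-\overline{\tau_\mathbb{G}} = H'/(1+H^2)$. This identity is exactly part of the content of Theorem~\ref{teo_3.1}, so I may quote it.

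Next I would observe that $1+H^2\ge 1>0$ at every point of $I$, so the rational expression $H'/(1+H^2)$ vanishes identically on $I$ if and only if $H'\equiv 0$ on $I$, i.e. if and only if $H$ is a constant function. This is the only genuine verification in the argument, and it is immediate from the non‑vanishing of the denominator.

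Finally I would apply Theorem~\ref{teo_2.1}, according to which $\gamma$ is a general helix in $\mathbb{G}$ precisely when its harmonic curvature $H$ is constant. Chaining the two equivalences yields $\gamma$ is a general helix $\iff H$ is constant $\iff H'\equiv 0 \iff \overline{\tau}-\overline{\tau_\mathbb{G}}=0$, which is the assertion of the corollary.

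The main obstacle: there really is none — the statement is essentially a reformulation of the third relation in~(\ref{eq_3.2}) once the Lancret‑type criterion of Theorem~\ref{teo_2.1} is in hand. The only subtlety worth flagging is the remark that $1+H^2$ is nowhere zero, so that the zero set of $\overline{\tau}-\overline{\tau_\mathbb{G}}$ coincides exactly with the zero set of $H'$. Should one wish to make the corollary self‑contained, the identity $\overline{\tau}-\overline{\tau_\mathbb{G}} = H'/(1+H^2)$ can be re‑derived from~(\ref{eq_2.9}), the definitions~(\ref{eq_2.14}),~(\ref{eq_2.16}),~(\ref{eq_2.17}) of $\Omega,\omega,\Omega^*$, and the bracket relations~(\ref{eq_2.8}), exactly as in the derivation of~(\ref{eq_3.1})–(\ref{eq_3.2}); but since Theorem~\ref{teo_3.1} is already established, invoking it directly is the cleaner route.
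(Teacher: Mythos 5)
Your proposal is correct and follows exactly the route the paper intends: Corollary~\ref{crl_3.1} is stated as an immediate consequence of Theorem~\ref{teo_3.1}, reading $\overline{\tau}-\overline{\tau_\mathbb{G}}=H'/(1+H^2)$ from (\ref{eq_3.2}) and combining it with the Lancret-type criterion of Theorem~\ref{teo_2.1}, with the observation that $1+H^2>0$ justifying the equivalence $H'\equiv 0 \iff \overline{\tau}-\overline{\tau_\mathbb{G}}\equiv 0$ on the interval $I$. Nothing is missing.
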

 
\begin{corollary} \label{crl_3.2}
Let $\gamma$ be a unit speed Frenet curve in $\mathbb{G}$. Then $\gamma$ is a slant helix if and only if its natural mate $\beta$ is a general helix.	
\end{corollary}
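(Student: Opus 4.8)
The plan is to read off the harmonic curvature $\overline{H}$ of the natural mate $\beta$ directly from Theorem~\ref{teo_3.1} and then compare it with the slant-helix function $\sigma$ of $\gamma$ via Theorems~\ref{teo_2.1} and \ref{teo_2.2}. The whole argument is a one-line computation wrapped between two known characterizations.

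First I would apply Definition~\ref{defn_2.2} to the curve $\beta$: its harmonic curvature is $\overline{H} = (\overline{\tau} - \overline{\tau_\mathbb{G}})/\overline{\kappa}$, where $\overline{\tau_\mathbb{G}} = \frac{1}{2}\langle[\overline{T},\overline{N}],\overline{B}\rangle$. Substituting the two identities recorded in (\ref{eq_3.2}), namely $\overline{\kappa} = \omega$ and $\overline{\tau} - \overline{\tau_\mathbb{G}} = \omega/\sigma$, gives at once $\overline{H} = 1/\sigma$; equivalently, using the closed forms $\overline{\kappa} = \kappa\sqrt{1+H^{2}}$ and $\overline{\tau}-\overline{\tau_\mathbb{G}} = H'/(1+H^{2})$, one gets $\overline{H} = H'/\bigl(\kappa(1+H^{2})^{3/2}\bigr)$, which is the reciprocal of the function $\sigma$ in (\ref{eq_2.11}). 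Note that $\overline{\kappa} = \omega = \sqrt{(\tau-\tau_\mathbb{G})^{2}+\kappa^{2}} > 0$ because $\kappa > 0$, so $\overline{H}$ is a well-defined smooth function along $\gamma$ and $\beta$ genuinely has a harmonic curvature.

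Next I would invoke Theorem~\ref{teo_2.1} applied to $\beta$: $\beta$ is a general helix in $\mathbb{G}$ if and only if $\overline{H}$ is a constant function. On the other hand, Theorem~\ref{teo_2.2} states that $\gamma$ is a slant helix if and only if $\sigma$ is a constant function. Since $\overline{H} = 1/\sigma$, chaining these two equivalences gives the claim: $\gamma$ is a slant helix $\iff \sigma$ is constant $\iff \overline{H}$ is constant $\iff \beta$ is a general helix.

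The computation itself is immediate from (\ref{eq_3.2}), so I do not expect a genuine obstacle. The only point deserving a word of care is the passage ``$\overline{H}$ constant $\iff \sigma$ constant'': one must make sure the reciprocal relation hides no degenerate case. This is handled by noting that the slant-helix condition is tested precisely on the locus where $H' \neq 0$, so $\sigma$ is there finite and nowhere zero, whence $1/\sigma$ is constant exactly when $\sigma$ is; alternatively, one may simply phrase the entire argument in terms of $\overline{H} = H'/\bigl(\kappa(1+H^{2})^{3/2}\bigr)$ and never invert anything at all.
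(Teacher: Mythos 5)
Your proof is correct and follows essentially the same route the paper intends: read off $\overline{\kappa}=\omega$ and $\overline{\tau}-\overline{\tau_\mathbb{G}}=\omega/\sigma$ from (\ref{eq_3.2}), conclude $\overline{H}=1/\sigma=H'/\bigl(\kappa(1+H^{2})^{3/2}\bigr)$, and chain the characterizations of Theorem \ref{teo_2.1} (applied to $\beta$) and Theorem \ref{teo_2.2} (applied to $\gamma$). Your extra remark about the locus $H'\neq 0$ is a sensible precaution that the paper itself leaves implicit, but it does not change the argument.
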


\begin{remark}\label{rmrk_3.2}
We remark that Theorem \ref{teo_3.1}, Corollary \ref{crl_3.1} and Corollary \ref{crl_3.2} are same from algebraic viewpoint with Theorem 5, Theorem 7 and Theorem 8 in \cite{kiziltug}, respectively.
\end{remark}

Now, we give the following characterization for natural mate $\beta$ of the curve $\gamma$ which is a rectifying curve. 

\begin{corollary} \label{crl_3.3}
Let $\gamma$ be a unit speed Frenet curve in $\mathbb{G}$ with curvature $ \kappa $ and torsion $ \tau $. Then $\gamma$ is a rectifying curve iff the curvatures $\overline{\kappa}$ and $\overline{\tau}$ of its natural mate satisfy 
\begin{eqnarray} \label{eq_3.3}
{a}{{\kappa}^2} = (\overline{\tau} - \overline{\tau_\mathbb{G}}){\overline{\kappa}^2}
\end{eqnarray}
where nonzero certain constant $\lambda$  and $\overline{\tau_\mathbb{G}} = \dfrac{1}{2}\left\langle [\overline{T},\overline{N}],\overline{B} \right\rangle$.
\end{corollary}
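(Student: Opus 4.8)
The plan is to derive the claim directly from the Frenet data of the natural mate recorded in (\ref{eq_3.2}), combined with the characterization of rectifying curves in Theorem \ref{teo_2.3}. First I would recall from (\ref{eq_3.2}) that the natural mate $\beta$ satisfies $\overline{\kappa} = \omega = \kappa\sqrt{1+H^2}$ and $\overline{\tau} - \overline{\tau_\mathbb{G}} = H'/(1+H^2)$, where $H$ denotes the harmonic curvature of $\gamma$. Multiplying these two relations gives
\begin{eqnarray*}
(\overline{\tau} - \overline{\tau_\mathbb{G}})\,\overline{\kappa}^{\,2} = \frac{H'}{1+H^2}\,\kappa^2\,(1+H^2) = H'\,\kappa^2 ,
\end{eqnarray*}
so the asserted identity (\ref{eq_3.3}) is nothing but $H'\kappa^2 = a\kappa^2$, and since $\kappa>0$ along a Frenet curve this is equivalent to $H' \equiv a$ for the constant $a$.

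Next I would invoke Theorem \ref{teo_2.3}, according to which $\gamma$ is a rectifying curve precisely when $H(s) = as + b$ with $a \neq 0$ and $b$ constant; differentiating, this holds exactly when $H'$ is a nonzero constant. Putting the two facts together: if $\gamma$ is rectifying then $H' = a$ with $a\neq 0$, whence $(\overline{\tau} - \overline{\tau_\mathbb{G}})\overline{\kappa}^{\,2} = a\kappa^2$; conversely, if that identity holds with $a\neq 0$ then $H' \equiv a$, so $H(s) = as+b$ for some constant $b$ with $a\neq 0$, and $\gamma$ is rectifying. This establishes the equivalence, with $a$ being precisely the leading coefficient of the affine function $H$.

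The argument has no genuine obstacle beyond careful bookkeeping. The only points worth a word of care are: (i) using $\kappa>0$ to cancel the factor $\kappa^2$ and pass freely between (\ref{eq_3.3}) and $H'\equiv a$; and (ii) the role of the hypothesis $a\neq 0$, which is exactly what separates a rectifying curve from a general helix — for the latter $H$, and hence $H'$, is constant by Theorem \ref{teo_2.1}, and that constant is $0$, so (\ref{eq_3.3}) would only hold with $a=0$. Once these are noted, the corollary follows immediately by substituting the expressions for $\overline{\kappa}$ and $\overline{\tau}-\overline{\tau_\mathbb{G}}$ from (\ref{eq_3.2}).
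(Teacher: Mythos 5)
Your proposal is correct and follows essentially the same route as the paper: both use the relations $\overline{\kappa}=\kappa\sqrt{1+H^2}$ and $\overline{\tau}-\overline{\tau_\mathbb{G}}=H'/(1+H^2)$ from (\ref{eq_3.2}) together with Theorem \ref{teo_2.3} to reduce (\ref{eq_3.3}) to $H'\equiv a\neq 0$. Your single multiplication handling both directions at once is just a slightly tidier packaging of the paper's two-directional computation.
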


\begin{proof}
We suppose that $ \gamma $ is rectifying curve in $ \mathbb{G} $ with arc-length parameter $ s $. Then, by Theorem \ref{teo_2.3}, the harmonic curvature of $ \gamma $ is given by $ H(s) = as + b $ where $ a \neq 0 $ and $ b $ are constants. Now, let the curvature and the torsion of the natural mate of $\gamma $ be $ \overline{{\kappa}} $ and $ \overline{{\tau}} $, respectively. Then, by the equations (\ref{eq_3.2}), 
\begin{eqnarray*}
	\overline \kappa &=& \kappa\sqrt {1 + {H^2}}  = \kappa\sqrt {1 + {{(as + b)}^2}}, \\
	\overline \tau  - \overline {{\tau_\mathbb{G}}} &=& \frac{{H'}}{{1 + {H^2}}} = \frac{a}{{1 + {{(as + b)}^2}}}.
\end{eqnarray*}
Hence, we get easily the equation (\ref{eq_3.3}) from the last equations.

On the other hand, let the curvatures of $ \gamma $ and its natural mate satisfy the the equation (\ref{eq_3.3}). Then, by using the equations (\ref{eq_3.2}), we obtain that $ H'(s)=a $ 
where $ a $ is non-zero constant and $ s $ is arc-length parameter of $ \gamma $. This means that  $ H'(s)=as+b $ and so $ \gamma $ is a rectifying curve in $ \mathbb{G} $ by Theorem \ref{teo_2.3}.
\end{proof}

Now, by using Theorem \ref{teo_3.1}, we give a result for natural mate $\beta$ of $\gamma$ which is especially a spherical curve.

\begin{corollary}\label{crl_3.4}
Let $\gamma$ be a unit speed Frenet curve in $\mathbb{G}$ with curvature $ \kappa $ and torsion $ \tau $. Then $\gamma$ is a spherical curve whose left shift is lying a sphere with radius $ r $ in  Lie algebra $\mathfrak{g}$ of $\mathbb{G} $ if and only if the curvature $\overline{\kappa}$ and torsion $\overline{\tau}$ of its natural mate satisfy 
\begin{eqnarray} \label{eq_3.4}
\frac{{\bar \kappa '}}{{\bar \kappa }} = \left( {\overline \tau   - \overline {{\tau _\mathbb{G}}} } \right)H \pm \left( {\tau  - {\tau _\mathbb{G}}} \right)\sqrt {{r^2}{\kappa ^2} - 1}.
\end{eqnarray}
\end{corollary}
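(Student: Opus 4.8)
The plan is to reduce the spherical condition furnished by Theorem~\ref{teo_2.4}(ii) to a differential relation involving only the invariants $\kappa,\tau,\tau_\mathbb{G}$ of $\gamma$ itself, and, independently, to compute the logarithmic derivative $\overline{\kappa}'/\overline{\kappa}$ from the explicit formulas (\ref{eq_3.2}) so as to recognize that (\ref{eq_3.4}) is exactly that relation. Throughout I write $H=(\tau-\tau_\mathbb{G})/\kappa$ for the harmonic curvature, and I treat the main case $\tau-\tau_\mathbb{G}\neq 0$; the degenerate case $\tau-\tau_\mathbb{G}=0$ (so $H\equiv 0$, $\overline{\kappa}=\kappa$, and $\overline{\tau}-\overline{\tau_\mathbb{G}}=0$ by (\ref{eq_3.2})) is handled separately, where (\ref{eq_3.4}) collapses to $\kappa'=0$ and the equivalence is precisely Theorem~\ref{teo_2.4}(i).

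First I would differentiate $\overline{\kappa}=\kappa\sqrt{1+H^2}$ logarithmically, obtaining
\[
\frac{\overline{\kappa}'}{\overline{\kappa}} = \frac{\kappa'}{\kappa} + \frac{HH'}{1+H^2}.
\]
Since (\ref{eq_3.2}) gives $\overline{\tau}-\overline{\tau_\mathbb{G}}=H'/(1+H^2)$, the last term equals $(\overline{\tau}-\overline{\tau_\mathbb{G}})H$. Hence (\ref{eq_3.4}) is equivalent to the mate-free relation $\kappa'/\kappa = \pm(\tau-\tau_\mathbb{G})\sqrt{r^2\kappa^2-1}$.

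Next I would show this last relation is equivalent to (\ref{eq_2.18}). Writing $\rho=1/\kappa$, so $\rho'=-\kappa'/\kappa^2$, equation (\ref{eq_2.18}) reads $\bigl(\rho'/(\tau-\tau_\mathbb{G})\bigr)^2+\rho^2=r^2$; multiplying through by $\kappa^2$ and rearranging turns it into $(\kappa'/\kappa)^2=(\tau-\tau_\mathbb{G})^2(r^2\kappa^2-1)$, which is precisely the square of the relation obtained in the previous step. Since $\gamma$ is a Frenet curve ($\kappa>0$), and on a sphere of radius $r$ equation (\ref{eq_2.18}) forces $\rho\le r$, i.e. $r^2\kappa^2-1\ge 0$, taking square roots is legitimate and introduces only the sign $\pm$ already present in (\ref{eq_3.4}). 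Reading this chain of equivalences in both directions, together with Theorem~\ref{teo_3.1}, yields the corollary.

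The one genuinely delicate point is the passage between the squared identity and (\ref{eq_3.4}): one must verify that the sign ambiguity is harmless (it is, since (\ref{eq_3.4}) carries $\pm$), that the radicand $r^2\kappa^2-1$ is nonnegative exactly on the sphere, and that the division by $\kappa$ and by $\tau-\tau_\mathbb{G}$ is valid, with isolated zeros of $\kappa'$ or of $\tau-\tau_\mathbb{G}$ handled by continuity. Everything else is the routine logarithmic differentiation of (\ref{eq_3.2}) and bookkeeping with $H$.
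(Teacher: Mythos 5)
Your proof is correct and follows essentially the same route as the paper: the logarithmic-derivative identity $\overline{\kappa}'/\overline{\kappa}=\kappa'/\kappa+(\overline{\tau}-\overline{\tau_\mathbb{G}})H$ coming from (\ref{eq_3.2}) reduces (\ref{eq_3.4}) to $\kappa'/\kappa=\pm(\tau-\tau_\mathbb{G})\sqrt{r^2\kappa^2-1}$, which is then matched against the spherical characterization of Theorem \ref{teo_2.4}. The only (harmless) difference is in the converse, where you verify (\ref{eq_2.18}) directly by squaring, while the paper differentiates once more to arrive at the equivalent condition (\ref{eq_2.19}).
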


\begin{proof}
We assume that $\gamma$ is a spherical curve whose left shift is lying a sphere with radius $ r $ in  Lie algebra $\mathfrak{g}$ of $\mathbb{G} $ with curvature $ \kappa $ and torsion $ \tau $. Then, 
\begin{eqnarray*}
\frac{{{{({\kappa ^\prime })}^2}}}{{{\kappa ^4}\left( {\tau  - {\tau _\mathbb{G}}} \right)}} + \frac{1}{{{\kappa ^2}}} = {r^2}
\end{eqnarray*}
and so
\begin{eqnarray}\label{eq_3.5}
{\kappa ^\prime } =  \pm \kappa \left( {\tau  - {\tau _\mathbb{G}}} \right)\sqrt {{r^2}{\kappa ^2} - 1}.
\end{eqnarray}
Also, by the equations (\ref{eq_3.2}) we get
\begin{eqnarray*}
\frac{{\bar \kappa'}}{{\bar \kappa}} = \frac{{\kappa'}}{\kappa} + \left( {\overline \tau  - \overline {{\tau_\mathbb{G}}} } \right)H
\end{eqnarray*}
and jointly with the equation (\ref{eq_3.5}), we obtain the equation (\ref{eq_3.4}).

On the other hand, let the curvatures of gamma and its natural mate satisfy the equation (\ref{eq_3.4}). After differentiation of $ {\bar \kappa} $ and with using $ {\overline \tau } $ in the equations (\ref{eq_3.2}), we have
\begin{eqnarray*}
\bar \kappa' = \kappa'\sqrt {1 + {H^2}}  + \kappa H\left( {\overline \tau  - \overline {{\tau_\mathbb{G}}} } \right)
\end{eqnarray*}
and from the last equation, we get
\begin{eqnarray} \label{eq_3.6}
\frac{{\kappa'}}{\kappa} = \frac{{\bar \kappa'}}{{\bar \kappa}} - H\left( {\overline \tau  - \overline {{\tau_\mathbb{G}}} } \right).
\end{eqnarray}
by using the equation (\ref{eq_3.4}) in the equation (\ref{eq_3.6}), we have
\begin{eqnarray} \label{eq_3.7}
\frac{{\kappa'}}{\kappa} =  \pm \left( {\tau - {\tau_\mathbb{G}}} \right)\sqrt {{r^2}{\kappa^2} - 1}. 
\end{eqnarray}
In that case, the equation (\ref{eq_3.7}) gives
\begin{eqnarray}\label{eq_3.8}
 \mp \frac{{\sqrt {{r^2}{\kappa^2} - 1} }}{\kappa} = {\left( {\frac{1}{\kappa}} \right)^\prime }\left( {\frac{1}{{\tau - {\tau_\mathbb{G}}}}} \right).
\end{eqnarray}
Then, after differentiation of  the equation (\ref{eq_3.8}) and bearing the equation (\ref{eq_3.7}) in mind, we obtain
\begin{eqnarray*}
{\left( {{{\left( {\frac{1}{\kappa}} \right)}^\prime }\left( {\frac{1}{{\tau - {\tau_\mathbb{G}}}}} \right)} \right)^\prime } =  - \frac{{\tau - {\tau_\mathbb{G}}}}{\kappa}.
\end{eqnarray*}
Thus, the last equation means that $ \gamma $ is a spherical curve in $ \mathbb{G} $ by the equation (\ref{eq_2.19}).
\end{proof}

\section{Spherical natural mates in $ \mathbb{G} $}

\begin{theorem}\label{teo_4.1}
Let $\gamma$ be a Frenet curve in $\mathbb{G}$ with constant curvature $\kappa = c >0$, then its natural mate $\beta$ is a spherical curve whose left shift is lying a sphere with radius $(1/c)$ in $\mathfrak{g}$. The converse holds if the torsion of the natural mate $\beta$ is not equal to the Lie group torsion of $ \beta $, that is $\overline{\tau} \neq {\overline{\tau_\mathbb{G}}}$.	
\end{theorem}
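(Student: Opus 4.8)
The plan is to transport the whole question to the Frenet data of the natural mate via Theorem \ref{teo_3.1}. By the equations (\ref{eq_3.2}) the mate $\beta$ satisfies
\begin{eqnarray*}
\overline{\kappa}=\kappa\sqrt{1+H^{2}},\qquad \overline{\tau}-\overline{\tau_\mathbb{G}}=\frac{H'}{1+H^{2}},
\end{eqnarray*}
where $H$ is the harmonic curvature of $\gamma$; and since $\overline{T}=N$ with $\gamma,\beta$ both unit speed, $\beta$ is parametrized by the same arc-length $s$ as $\gamma$, so every prime below denotes $d/ds$ and one may insert $\overline{\kappa}$ and $\overline{\tau}-\overline{\tau_\mathbb{G}}$ directly into the spherical test of Theorem \ref{teo_2.4} applied to $\beta$.

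For the direct implication I would set $\kappa\equiv c>0$. In the generic situation where $H$ is non-constant one is in case (ii) of Theorem \ref{teo_2.4} for $\beta$, and a direct computation gives $(1/\overline{\kappa})'=-HH'/\bigl(c(1+H^{2})^{3/2}\bigr)$, hence
\begin{eqnarray*}
\Bigl(\tfrac{1}{\overline{\kappa}}\Bigr)'\frac{1}{\overline{\tau}-\overline{\tau_\mathbb{G}}}=-\frac{H}{c\sqrt{1+H^{2}}},\qquad \left(\Bigl(\tfrac{1}{\overline{\kappa}}\Bigr)'\frac{1}{\overline{\tau}-\overline{\tau_\mathbb{G}}}\right)^{2}+\Bigl(\tfrac{1}{\overline{\kappa}}\Bigr)^{2}=\frac{H^{2}+1}{c^{2}(1+H^{2})}=\frac{1}{c^{2}},
\end{eqnarray*}
which is exactly (\ref{eq_2.18}) for $\beta$ with $r=1/c$, so Theorem \ref{teo_2.4} gives that the left shift of $\beta$ lies on a sphere of radius $1/c$. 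The remaining sub-case $H$ constant (equivalently $\overline{\tau}=\overline{\tau_\mathbb{G}}$) I would dispatch through case (i) of Theorem \ref{teo_2.4}: there $\overline{\kappa}=c\sqrt{1+H^{2}}$ is constant, so the left shift of $\beta$ is a circle of radius $1/\overline{\kappa}\le 1/c$, which still lies on a sphere of radius $1/c$, and the conclusion persists.

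For the converse I would assume that the left shift of $\beta$ lies on a sphere of radius $1/c$ and that $\overline{\tau}\neq\overline{\tau_\mathbb{G}}$, so that (\ref{eq_2.18}) holds for $\beta$ with $r=1/c$. Substituting $\overline{\kappa}=\kappa\sqrt{1+H^{2}}$ and $\overline{\tau}-\overline{\tau_\mathbb{G}}=H'/(1+H^{2})$, expanding $(1/\overline{\kappa})'$ by the product rule, and writing $Q:=(1/\kappa)'/H'=-\kappa'/(\kappa^{2}H')$ — which is legitimate precisely because $\overline{\tau}\neq\overline{\tau_\mathbb{G}}$ forces $H'\neq 0$ — a short computation reduces (\ref{eq_2.18}) to the identity
\begin{eqnarray*}
Q^{2}+\left(QH-\frac{1}{\kappa}\right)^{2}=\frac{1}{c^{2}}.
\end{eqnarray*}
The hard part will be to conclude from this identity, valid on all of $I$, that $\kappa'\equiv 0$ (after which $\kappa=c$ is immediate from $Q=0$). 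The route I would attempt is to differentiate the identity once more and feed in the equivalent second form (\ref{eq_2.19}) of the spherical condition for $\beta$, together with the relations (\ref{eq_3.2}), so as to eliminate the $H$-dependence and isolate $\kappa'=0$, in the same spirit as the converse part of the proof of Corollary \ref{crl_3.4}; getting this elimination to actually close is the main obstacle I anticipate.
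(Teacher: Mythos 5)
Your direct implication is correct and is essentially the paper's own computation: with $\kappa=c$ the paper verifies (\ref{eq_2.18}) and (\ref{eq_2.19}) for $\beta$ with $r=1/c$, written in terms of $\tau-\tau_\mathbb{G}=cH$ rather than $H$, after splitting off the degenerate case $\tau=\tau_\mathbb{G}$ (where $\overline{\kappa}=c$ and Theorem \ref{teo_2.4}(i) applies directly). Your split by ``$H$ constant'' versus ``$H$ non-constant'' is in fact the cleaner one, since it is $H'\neq0$ (i.e.\ $\overline{\tau}\neq\overline{\tau_\mathbb{G}}$), not $H\neq0$, that legitimizes the division; but note that your dispatch of the constant-$H$ subcase --- ``a circle of radius $1/\overline{\kappa}\le 1/c$ still lies on a sphere of radius $1/c$'' --- is a geometric assertion that the literal iff of Theorem \ref{teo_2.4}(i) does not certify (it certifies radius $1/\overline{\kappa}$, which exceeds... equals $1/c$ only when $H=0$), so that point needs an independent argument rather than a citation.

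The converse is where your proposal has a genuine gap, and it is not a routine one. Your reduction of (\ref{eq_2.18}) for $\beta$ to $Q^{2}+\left(QH-\tfrac{1}{\kappa}\right)^{2}=\tfrac{1}{c^{2}}$ with $Q=(1/\kappa)'/H'$ is algebraically correct, but the step you defer --- concluding $\kappa'\equiv0$ by differentiating and feeding in (\ref{eq_2.19}) --- cannot close from those inputs alone. Indeed, for any constant $d$, setting $H=\tan\psi$ and $\kappa=c\cos\psi/\cos(\psi+d)$ gives $Q=-\sin d/c$ and $QH-1/\kappa=-\cos d/c$, so your identity holds on all of $I$ with non-constant $\kappa$; moreover $\overline{\kappa}=c\sec(\psi+d)$, $\overline{\tau}-\overline{\tau_\mathbb{G}}=\psi'$ then satisfy (\ref{eq_2.19}) for $\beta$ as well, so no elimination using only these relations can isolate $\kappa'=0$ --- only $d=0$ returns $\kappa=c$. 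The paper's proof takes a different route that confronts this phase explicitly: from Theorem \ref{teo_2.4} it writes the separable ODE $\overline{\kappa}'/\bigl(\overline{\kappa}\sqrt{\overline{\kappa}^{2}-c^{2}}\bigr)=\pm(\overline{\tau}-\overline{\tau_\mathbb{G}})/c$, integrates it to $\overline{\kappa}=c\sec\bigl(\int(\overline{\tau}-\overline{\tau_\mathbb{G}})\,ds\bigr)$ (its (\ref{eq_4.2})), then puts $H=\tan(\theta+b)$ so that $\overline{\tau}-\overline{\tau_\mathbb{G}}=\theta'$ and $\overline{\kappa}=\kappa\sec(\theta+b)$, and concludes $\kappa=c$ by matching the two secant expressions after fixing the integration constant ($b=\theta_{0}$). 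That normalization of the constant is exactly the ingredient your reduced identity is missing; without supplying it (or an equivalent hypothesis), the second half of your proposal remains a plan with an unfillable hole along the route you describe, not a proof.
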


\begin{proof}
We suppose that $\gamma$ is a Frenet curve in $\mathbb{G}$ with constant curvature $\kappa = c >0$. Then, by using the the equations (\ref{eq_3.2}), the curvatures of its natural mate $ \beta $ are given by
\begin{eqnarray}\label{eq_4.1}
\bar \kappa = {\left( {{{\left( {\tau - {\tau_\mathbb{G}}} \right)}^2} + {c^2}} \right)^{1/2}}\,,\,\,\,\,\overline \tau  - \overline {{\tau_\mathbb{G}}}  = {{c{{\left( {\tau - {\tau_\mathbb{G}}} \right)}^\prime }} \mathord{\left/
		{\vphantom {{c{{\left( {\tau - {\tau_\mathbb{G}}} \right)}^\prime }} {\left( {{{\left( {\tau - {\tau_\mathbb{G}}} \right)}^2} + {c^2}} \right)}}} \right.
		\kern-\nulldelimiterspace} {\left( {{{\left( {\tau - {\tau_\mathbb{G}}} \right)}^2} + {c^2}} \right)}},
\end{eqnarray} 
where the harmonic curvature function of $ \gamma $ is $ H = \left( {\tau - {\tau_\mathbb{G}}} \right)/c $.

\noindent Case (1): If $ \tau = {\tau_\mathbb{G}} $, then we get
\begin{eqnarray*}
\bar \kappa = c\,,\,\,\,\,\overline \tau  - \overline {{\tau_\mathbb{G}}}  = 0.
\end{eqnarray*}
Hence, by Theorem \ref{teo_2.4}, the natural mate $ \beta $ is a spherical curve whose left shift is lying a sphere with radius $(1/c)$ in $\mathfrak{g}$.

\noindent Case (2): If $ \tau \neq {\tau_\mathbb{G}} $, then by using the equation (\ref{eq_4.1}), we get
\begin{eqnarray*}
{\left( {{{\left( {\frac{1}{{\bar \kappa}}} \right)}^\prime }\left( {\frac{1}{{\overline \tau  - \overline {{\tau_\mathbb{G}}} }}} \right)} \right)^\prime } + \overline H  =  - \frac{{c\,{{\left( {\tau - {\tau_\mathbb{G}}} \right)}^\prime }}}{{{{\left( {{{\left( {\tau - {\tau_\mathbb{G}}} \right)}^2} + {c^2}} \right)}^{3/2}}}} + \frac{{c\,{{\left( {\tau - {\tau_\mathbb{G}}} \right)}^\prime }}}{{{{\left( {{{\left( {\tau - {\tau_\mathbb{G}}} \right)}^2} + {c^2}} \right)}^{3/2}}}} = 0,
\end{eqnarray*}
and
\begin{eqnarray*}
{\left( {{{\left( {\frac{1}{{\bar \kappa}}} \right)}^\prime }\left( {\frac{1}{{\overline \tau  - \overline {{\tau_\mathbb{G}}} }}} \right)} \right)^2} + {\left( {\frac{1}{{\bar \kappa}}} \right)^2} = \frac{{{{\left( {\tau - {\tau_\mathbb{G}}} \right)}^2}}}{{{c^2}\left( {{{\left( {\tau - {\tau_\mathbb{G}}} \right)}^2} + {c^2}} \right)}} + \frac{1}{{{{\left( {\tau - {\tau_\mathbb{G}}} \right)}^2} + {c^2}}} = \frac{1}{{{c^2}}},
\end{eqnarray*}
where the harmonic curvature function of the natural mate $ \beta $ is $ \overline H  = \left( {\overline \tau  - \overline {{\tau_\mathbb{G}}} } \right)/\bar \kappa $. Hence, by Theorem \ref{teo_2.4}, the natural mate $ \beta $ is a spherical curve in $ \mathbb{G} $.

Conversely, under the case  $ \tau \neq {\tau_\mathbb{G}} $, we assume that the natural mate $ \beta $ is a spherical curve whose left shift is lying a sphere with radius $(1/c)$ in $\mathfrak{g}$. Then, by 
Theorem \ref{teo_2.4}, we have
\begin{eqnarray*}
\frac{{{{\left( {\bar \kappa'} \right)}^2}}}{{{{\left( {\overline \tau  - \overline {{\tau_\mathbb{G}}} } \right)}^2}{{\bar \kappa}^4}}} + \frac{1}{{{{\bar \kappa}^2}}} = \frac{1}{{{c^2}}},
\end{eqnarray*}
and so, we get
\begin{eqnarray*}
\frac{{\bar \kappa'}}{{\bar \kappa\sqrt {{{\bar \kappa}^2} - {c^2}} }} =  \pm \frac{{\overline \tau  - \overline {{\tau_\mathbb{G}}} }}{c}.
\end{eqnarray*}
If we integrate the last equation after put $ \bar \kappa = c\,u $, then we have
\begin{eqnarray*}
\int {\frac{{du}}{{c\,{u^2}\sqrt {1 - \left( {1/{u^2}} \right)} }} =  \pm \int {\frac{{\overline \tau  - \overline {{\tau_\mathbb{G}}} }}{c}ds} }, 
\end{eqnarray*}
and so,
\begin{eqnarray}\label{eq_4.2}
\bar \kappa = c\,\sec \left( {\int {\left( {\overline \tau  - \overline {{\tau_\mathbb{G}}} } \right)ds} } \right).
\end{eqnarray}
Now, by using the equations (\ref{eq_3.2}), the torsion $ \overline{{\tau}} $ of $ \beta $ satisfies $ \overline \tau  - \overline {{\tau_\mathbb{G}}}  = {{H'} \mathord{\left/{\vphantom {{H'} {\left( {1 + {H^2}} \right)}}} \right. \kern-\nulldelimiterspace} {\left( {1 + {H^2}} \right)}} $, where the harmonic curvature function of $ \gamma $ is $ H = \left( {\tau - {\tau_\mathbb{G}}} \right)/\kappa $. Moreover, if we take as $ H=\tan(\theta + b) $, where b is arbitrary constant, then we find $ \overline \tau  - \overline {{\tau_\mathbb{G}}}  = \theta' $. Hence, by using the equation (\ref{eq_4.2}), $ \bar \kappa = c\,\sec \left( {\theta + {\theta_0}} \right) $ such that $ \theta_0 $ is integration constant. Also, if we take as $ b = \theta_0 $, we have
\begin{eqnarray}\label{eq_4.3}
\bar \kappa = c\,\sec \left( {\theta + b} \right).
\end{eqnarray}
On the other hand, by using the equation (\ref{eq_3.2}), $ \bar \kappa = \kappa\sqrt {1 + {H^2}}$. Thus, for $ H=\tan(\theta + b) $, we obtain
\begin{eqnarray}\label{eq_4.4}
\bar \kappa = \kappa\sec \left( {\theta + b} \right).
\end{eqnarray}
Finally, by the equations (\ref{eq_4.3}) and  (\ref{eq_4.4}), it is easily seen that $ \kappa = c $.  
\end{proof}

\section{Natural mates with constant curvature in $ \mathbb{G} $}
Now, let the curvatures of a Frenet curve $ \gamma $ in $ \mathbb{G} $ be given by  
\begin{eqnarray}\label{eq_5.1}
\left\{ \begin{gathered}
\kappa = c\cos (\phi(s)), \hfill \\
\tau - {\tau_\mathbb{G}} = c\sin (\phi(s)), \hfill \\ 
\end{gathered}  \right.
\end{eqnarray}
where $ \phi(s) $ is a differentiable function and $ c $ is a non-zero positive constant. Then, by the equations (\ref{eq_2.16}) and (\ref{eq_3.2}), the curvature $ \overline{{\kappa}} $ of the natural mate $ \beta $ of $ \gamma $ is a positive constant. That is, $ \overline{{\kappa}} = c>0 $.

Now, we give the following characterization for natural mates with constant curvature.

\begin{theorem}\label{teo_5.1}
Let $\gamma$ be a Frenet curve in $\mathbb{G}$ and $ \beta $ be its natural mate. If the curvature $ \overline{{\kappa}} $ of the natural mate $ \beta $ is a positive constant $ c $, then the curvatures of $\gamma $ is given by  
\begin{eqnarray} \label{eq_5.2}
\left\{ \begin{gathered}
\kappa  = c\cos \left( {\int {\left( {\overline \tau   - \overline {{\tau _\mathbb{G}}} } \right)ds} } \right), \hfill \\
\tau  - {\tau _\mathbb{G}} = c\sin \left( {\int {\left( {\overline \tau   - \overline {{\tau _\mathbb{G}}} } \right)ds} } \right). \hfill \\ 
\end{gathered}  \right.
\end{eqnarray}
\end{theorem}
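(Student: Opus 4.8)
The plan is to read everything off the natural-mate relations (\ref{eq_3.2}) together with the formula (\ref{eq_2.16}) for the length of the extrinsic Darboux vector. Since $\overline{\kappa} = \omega$ and, by hypothesis, $\overline{\kappa} \equiv c$, squaring (\ref{eq_2.16}) immediately gives
\begin{eqnarray*}
\kappa^{2} + \left(\tau - \tau_\mathbb{G}\right)^{2} = c^{2}
\end{eqnarray*}
at every point of $\gamma$. Because $\kappa > 0$ (so in fact $0 < \kappa \le c$), this constraint lets us introduce a smooth function $\phi = \phi(s)$ with $\kappa = c\cos\phi$ and $\tau - \tau_\mathbb{G} = c\sin\phi$; in other words, the ansatz (\ref{eq_5.1}) becomes a \emph{consequence} of the hypothesis rather than an assumption. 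With this notation the harmonic curvature of $\gamma$ is simply $H = (\tau - \tau_\mathbb{G})/\kappa = \tan\phi$.

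Next I would compute $\overline{\tau} - \overline{\tau_\mathbb{G}}$ in terms of $\phi$. By the third line of (\ref{eq_3.2}) we have $\overline{\tau} - \overline{\tau_\mathbb{G}} = H'/(1 + H^{2})$; substituting $H = \tan\phi$, so that $H' = \phi'\sec^{2}\phi$ and $1 + H^{2} = \sec^{2}\phi$, everything cancels and we obtain the clean identity
\begin{eqnarray*}
\overline{\tau} - \overline{\tau_\mathbb{G}} = \phi'.
\end{eqnarray*}
Integrating gives $\phi(s) = \int\left(\overline{\tau} - \overline{\tau_\mathbb{G}}\right)ds + b$ for some constant $b$; fixing the additive constant of the indefinite integral (equivalently, fixing the branch of $\phi$ at a base point) so that $b = 0$ yields $\phi = \int\left(\overline{\tau} - \overline{\tau_\mathbb{G}}\right)ds$. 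Substituting this back into $\kappa = c\cos\phi$ and $\tau - \tau_\mathbb{G} = c\sin\phi$ gives exactly (\ref{eq_5.2}), which is the assertion.

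The computation is short, so the only point that needs care is the existence of the smooth angle function $\phi$ realizing both $\kappa = c\cos\phi$ and $\tau - \tau_\mathbb{G} = c\sin\phi$ simultaneously — that is, that the planar curve $s \mapsto \left(\kappa(s)/c,\,(\tau - \tau_\mathbb{G})(s)/c\right)$ on the unit circle admits a smooth angular lift — together with the harmless normalization of the integration constant. If one prefers to bypass the lifting argument, one can argue directly: differentiating $\kappa^{2} + (\tau - \tau_\mathbb{G})^{2} = c^{2}$ gives $\kappa\kappa' + (\tau - \tau_\mathbb{G})(\tau - \tau_\mathbb{G})' = 0$, while a short manipulation of (\ref{eq_3.2}) shows $\overline{\tau} - \overline{\tau_\mathbb{G}} = \left(\kappa(\tau - \tau_\mathbb{G})' - (\tau - \tau_\mathbb{G})\kappa'\right)/c^{2}$, which is precisely the angular speed of the unit-circle curve above; this again identifies $\int(\overline{\tau} - \overline{\tau_\mathbb{G}})\,ds$ with the angle, after fixing one constant. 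Beyond this bookkeeping there is no genuine obstacle, and the statement may be seen as the natural converse companion to the computation carried out in the proof of Theorem \ref{teo_4.1}.
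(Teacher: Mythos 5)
Your proposal is correct and follows essentially the same route as the paper's own proof: both extract the relation $\kappa^{2}+(\tau-\tau_\mathbb{G})^{2}=c^{2}$ from $\overline{\kappa}=\omega=c$, rewrite $\overline{\tau}-\overline{\tau_\mathbb{G}}=H'/(1+H^{2})$ as the derivative of an angle (the paper integrates $\left((\tau-\tau_\mathbb{G})/c\right)'/\sqrt{1-\left((\tau-\tau_\mathbb{G})/c\right)^{2}}$ to an arcsine, you introduce the lift $\phi$ directly), and then recover $\kappa$ and $\tau-\tau_\mathbb{G}$ as $c\cos$ and $c\sin$ of $\int(\overline{\tau}-\overline{\tau_\mathbb{G}})\,ds$. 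Your explicit attention to the smooth angular lift and the normalization of the integration constant is a minor refinement of, not a departure from, the paper's argument.
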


\begin{proof}
We suppose that  the curvature $ \overline{{\kappa}} $ of the natural mate $ \beta $ is a constant $ c>0 $. Then, by the equations (\ref{eq_3.2}), we have
\begin{eqnarray}\label{eq_5.3}
{\left( {\tau - {\tau_\mathbb{G}}} \right)^2} + {\kappa^2} = {c^2},
\end{eqnarray}
such that $ \overline{{\kappa}} = \omega$. Also, we get 
\begin{eqnarray}\label{eq_5.4}
\overline \tau  - \overline {{\tau_\mathbb{G}}}  = \frac{{{{\left( {{{\left( {\tau - {\tau_\mathbb{G}}} \right)} \mathord{\left/{\vphantom {{\left( {\tau - {\tau_\mathbb{G}}} \right)} c}} \right.\kern-\nulldelimiterspace} c}} \right)}^\prime }}}{{\sqrt {1 - {{\left( {{{\left( {\tau - {\tau_\mathbb{G}}} \right)} \mathord{\left/{\vphantom {{\left( {\tau - {\tau_\mathbb{G}}} \right)} c}} \right.\kern-\nulldelimiterspace} c}} \right)}^2}} }},
\end{eqnarray}
by the equations (\ref{eq_3.2}) jointly with (\ref{eq_5.3}). Moreover, we have $\tau - {\tau_\mathbb{G}} = c\sin \left( {\int {\left( {\overline \tau  - \overline {{\tau_\mathbb{G}}} } \right)ds} } \right) $ after integrating of the equation (\ref{eq_5.4}). Hence, by using the equation (\ref{eq_5.3}), it is easily seen that $ \kappa = c \cos\left({\int{\left( \overline{{\tau}}-\overline{{\tau _\mathbb{G}}}\right)ds}}\right) $.
\end{proof}

\begin{remark}\label{rmrk_5.1}
Theorem \ref{teo_5.1} means that the function $ \phi $ in (\ref{eq_5.1}), must be given by $ \phi(s)={\int {\left( {\overline \tau  - \overline {{\tau_\mathbb{G}}} } \right)ds} } $ when the curvature of the natural mate $\beta$ is a non-zero constant.
\end{remark}

Now, we give the following characterization for a spherical curve $ \gamma $ in $ \mathbb{G} $ whose the curvature of its natural mate $\beta$ is a non-zero constant.

\begin{theorem} \label{teo_5.2}
Let $\gamma$ be a Frenet curve in $\mathbb{G}$ with arc-length parameter $s$, and $ \beta $ be its natural mate whose the curvature $ \overline{{\kappa}} $ is a non-zero constant $ c $. Then $\gamma$ is a spherical curve in $ \mathbb{G} $ if and only if there is a positive constant $a \geq c$ such that the torsion $\overline{\tau}$ of $\beta$ satisfies 
	\begin{eqnarray} \label{eq_5.6}
		\overline{\tau} - \overline\tau_\mathbb{G} = \pm \dfrac{{c^{2}}\sqrt{a^{2}-c^{2}}{\cos(cs)}}{c^{2}+(a^{2}-c^{2}){\sin^{2}(cs)}}
	\end{eqnarray}
\end{theorem}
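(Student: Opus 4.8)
The plan is to convert both implications into a single scalar ODE for the harmonic curvature $H=(\tau-\tau_\mathbb{G})/\kappa$ of $\gamma$, using the constancy of $\overline\kappa$ to eliminate everything else. Since $\overline\kappa=c$ is a non-zero constant, Theorem \ref{teo_5.1} gives $\kappa=c\cos\phi$ and $\tau-\tau_\mathbb{G}=c\sin\phi$ with $\phi=\int(\overline\tau-\overline{\tau_\mathbb{G}})\,ds$; hence $H=\tan\phi$ and, differentiating, $\overline\tau-\overline{\tau_\mathbb{G}}=\phi'=H'/(1+H^2)$. A direct computation from $\kappa=c\cos\phi$ and $\tau-\tau_\mathbb{G}=c\sin\phi$ then yields
\[
\left(\frac{1}{\kappa}\right)'\frac{1}{\tau-\tau_\mathbb{G}}=\frac{H'}{c^2},\qquad\left(\frac{1}{\kappa}\right)^2=\frac{1+H^2}{c^2}.
\]
Substituting these into the spherical characterization of Theorem \ref{teo_2.4}: equation (\ref{eq_2.18}) becomes $(H')^2+c^2H^2=c^2(c^2r^2-1)$, i.e. the left-hand side is constant, and its differentiated form (\ref{eq_2.19}) becomes simply $H''+c^2H=0$. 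Thus, under $\overline\kappa\equiv c$, the curve $\gamma$ is a spherical curve with left shift on a sphere of radius $r$ if and only if $H$ solves $H''+c^2H=0$, in which case $(H')^2+c^2H^2=c^2(c^2r^2-1)$.

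For the direct implication, assume $\gamma$ is spherical of radius $r$. Then $H$ solves $H''+c^2H=0$, so $H(s)=R\sin(cs+\delta)$ for constants $R\ge 0$ and $\delta$; choosing the origin of the arc-length parameter suitably we may take $H(s)=\pm R\sin(cs)$. Comparing $(H')^2+c^2H^2=c^2R^2$ with the constant value above gives $R^2=c^2r^2-1$, and from $r^2\ge(1/\kappa)^2=(1+H^2)/c^2\ge 1/c^2$ we get $r\ge 1/c$. Setting $a=c^2r$ we have $a\ge c$ and $a^2-c^2=c^2R^2$, so $R=\sqrt{a^2-c^2}/c$, whence
\[
\overline\tau-\overline{\tau_\mathbb{G}}=\frac{H'}{1+H^2}=\pm\frac{Rc\cos(cs)}{1+R^2\sin^2(cs)}=\pm\frac{c^2\sqrt{a^2-c^2}\cos(cs)}{c^2+(a^2-c^2)\sin^2(cs)},
\]
which is (\ref{eq_5.6}).

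For the converse, suppose (\ref{eq_5.6}) holds for some $a\ge c$, and put $b=\sqrt{a^2-c^2}/c$. One checks that $\frac{d}{ds}\arctan(b\sin(cs))$ equals the right-hand side of (\ref{eq_5.6}), so choosing the constant of integration to vanish gives $\phi=\pm\arctan(b\sin(cs))$ and therefore $H=\tan\phi=\pm b\sin(cs)=\pm\frac{\sqrt{a^2-c^2}}{c}\sin(cs)$. This $H$ visibly satisfies $H''+c^2H=0$, so by the reduction above $\gamma$ is a spherical curve, and substituting back into (\ref{eq_2.18}) identifies its radius as $r=a/c^2$.

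The only step needing genuine care — rather than genuine difficulty — is the bookkeeping in the converse: (\ref{eq_5.6}) determines $\phi'$ but not $\phi$, and only the antiderivative equal to a pure $\arctan(b\sin(cs))$ (equivalently, only the placement of the arc-length origin that makes $H$ a multiple of $\sin(cs)$ with no $\cos(cs)$ part) produces a solution of $H''+c^2H=0$; this is precisely the freedom in the arc-length origin already implicit in Theorem \ref{teo_5.1}, so it is legitimate to use it. One should also dispatch the degenerate cases separately: if $\tau-\tau_\mathbb{G}\equiv 0$ then $\gamma$ falls under case (i) of Theorem \ref{teo_2.4}, and the boundary value $a=c$ in (\ref{eq_5.6}) corresponds to $\overline\tau-\overline{\tau_\mathbb{G}}\equiv 0$, i.e. to $\gamma$ being a curve of constant curvature and constant $\tau-\tau_\mathbb{G}$.
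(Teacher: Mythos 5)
Your argument is correct and rests on the same two pillars as the paper's proof --- Theorem \ref{teo_5.1} to write $\kappa=c\cos\phi$, $\tau-\tau_\mathbb{G}=c\sin\phi$ with $\phi=\int(\overline\tau-\overline{\tau_\mathbb{G}})\,ds$, and the spherical characterization of Theorem \ref{teo_2.4} --- but you organize the computation differently. The paper substitutes \eqref{eq_5.2} into \eqref{eq_2.18}, separates variables and integrates $c(\overline\tau-\overline{\tau_\mathbb{G}})\sec^2\phi/\sqrt{a^2-c^2\sec^2\phi}=\pm c$ to an $\arcsin$, then differentiates back; you instead convert the spherical condition, under $\overline\kappa\equiv c$, into the linear equation $H''+c^2H=0$ with first integral $(H')^2+c^2H^2=c^2(c^2r^2-1)$, so that both implications reduce to solving or verifying a harmonic oscillator. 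This is cleaner: it yields the radius relation $r=a/c^2$ in both directions, and your derivation of $a\geq c$ from $r\geq 1/\kappa$ is simpler than the paper's. The delicate point you flag --- that \eqref{eq_5.6} prescribes only $\phi'$, and the converse needs the antiderivative for which $H$ is a pure multiple of $\sin(cs)$ --- is exactly the step the paper performs silently in passing from \eqref{eq_5.6} to \eqref{eq_5.8}, so your write-up is at least as careful as the original; note, however, that your justification via the ``freedom in the arc-length origin'' is not quite the right account, since the integration constant in $\phi$ is pinned down by $\gamma$ itself through $\kappa=c\cos\phi$, while shifting $s$ would also shift the phase of $\cos(cs)$ in \eqref{eq_5.6}: strictly, the converse (in your version and in the paper's) holds up to this phase normalization. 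The shared reliance on $\tau-\tau_\mathbb{G}\neq 0$ where \eqref{eq_2.18} is invoked, and your separate treatment of the degenerate cases $\tau-\tau_\mathbb{G}\equiv 0$ and $a=c$ via case (i) of Theorem \ref{teo_2.4}, match the level of rigor of the paper.
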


\begin{proof}
We suppose that $\gamma$ is a spherical curve in $ \mathbb{G} $ with the curvature $ \kappa $ and torsion $ \tau $, and also its natural mate is a Frenet curve with constant curvature $ \overline{{\kappa}}=c>0 $ and torsion $ \overline{{\tau}} $. Then, by (\ref{eq_2.18}) and  (\ref{eq_5.2}), there exist a positive constant $ r $ such that
\begin{eqnarray*}
\frac{1}{{{c^4}}}{\sec ^2}\left( {\int {\left( {\overline \tau  - \overline {{\tau_\mathbb{G}}} } \right)ds} } \right)\left( {{c^2} + {{\left( {\overline \tau  - \overline {{\tau_\mathbb{G}}} } \right)}^2}{{\sec }^2}\left( {\int {\left( {\overline \tau  - \overline {{\tau_\mathbb{G}}}} \right)ds} } \right)} \right) = {r^2}.
\end{eqnarray*}
Now, if we take as $ a = c^2 r $ in the previous equation, we have
\begin{eqnarray*}
{\left( {\overline \tau  - \overline {{\tau_\mathbb{G}}} } \right)^2}{\sec ^2}\left( {\int {\left( {\overline \tau  - \overline {{\tau_\mathbb{G}}} } \right)ds} } \right) + {c^2} = {a^2}{\cos ^2}\left( {\int {\left( {\overline \tau  - \overline {{\tau_\mathbb{G}}} } \right)ds} } \right),
\end{eqnarray*}
or equivalently,
\begin{eqnarray*}
{\left( {\overline \tau  - \overline {{\tau_\mathbb{G}}} } \right)^2}{\sec ^2}\left( {\int {\left( {\overline \tau  - \overline {{\tau_\mathbb{G}}} } \right)ds} } \right) = {a^2}{\cos ^2}\left( {\int {\left( {\overline \tau  - \overline {{\tau_\mathbb{G}}} } \right)ds} } \right) - {c^2}.
\end{eqnarray*}
The last equation implies $ a \geq c $, and so we get
\begin{eqnarray*}
\left( {\overline \tau  - \overline {{\tau_\mathbb{G}}} } \right)\sec \left( {\int {\left( {\overline \tau  - \overline {{\tau_\mathbb{G}}} } \right)ds} } \right) =  \pm \sqrt {{a^2}{{\cos }^2}\left( {\int {\left( {\overline \tau  - \overline {{\tau_\mathbb{G}}} } \right)ds} } \right) - {c^2}}  =  \pm \frac{{\sqrt {{a^2} - {c^2}{{\sec }^2}\left( {\int {\left( {\overline \tau  - \overline {{\tau_\mathbb{G}}} } \right)ds} } \right)} }}{{\sec \left( {\int {\left( {\overline \tau  - \overline {{\tau_\mathbb{G}}} } \right)ds} } \right)}},
\end{eqnarray*}
thus,
\begin{eqnarray*}
\frac{{c\left( {\overline \tau  - \overline {{\tau_\mathbb{G}}} } \right){{\sec }^2}\left( {\int {\left( {\overline \tau  - \overline {{\tau_\mathbb{G}}} } \right)ds} } \right)}}{{\sqrt {{a^2} - {c^2}{{\sec }^2}\left( {\int {\left( {\overline \tau  - \overline {{\tau_\mathbb{G}}} } \right)ds} } \right)} }} =  \pm c.
\end{eqnarray*}
After integrating by the last equation, we obtain
\begin{eqnarray*}
\arcsin \left( {\frac{c}{{\sqrt {{a^2} - {c^2}} }}\tan \left( {\int {\left( {\overline \tau  - \overline {{\tau_\mathbb{G}}} } \right)ds} } \right)} \right) =  \pm cs + {s_0},
\end{eqnarray*}
where $ s_0 $ is arbitrary constant. Thus, after applying a suitable translation with respect to $ s $, we have
\begin{eqnarray*}
\tan \left( {\int {\left( {\overline \tau  - \overline {{\tau_\mathbb{G}}} } \right)ds} } \right) =  \pm \frac{{\sqrt {{a^2} - {c^2}} }}{c}\sin (cs),
\end{eqnarray*}
or equivalently,
\begin{eqnarray*}
\int {\left( {\overline \tau  - \overline {{\tau_\mathbb{G}}} } \right)ds}  = \arctan \left( { \pm \frac{{\sqrt {{a^2} - {c^2}} }}{c}\sin (cs)} \right).
\end{eqnarray*}
Thus, after differentiating the previous equation, we obtain the equation $( \ref{eq_5.6}) $.

Conversely, let the curvature $ \overline{{\kappa}} $ of the natural mate $ \beta $ be a non-zero constant $ c $ and its torsion $ \overline{{\tau}} $ satisfy the equation (\ref{eq_5.6}). Then, by Theorem \ref{teo_5.1}, the curvatures of the curve $ \gamma $ are given by
\begin{eqnarray*}
\kappa = c\cos \left( {\int {\left( {\overline \tau  - \overline {{\tau_\mathbb{G}}} } \right)ds} } \right)\,,\,\,\,\,\tau - {\tau_\mathbb{G}} = c\sin \left( {\int {\left( {\overline \tau  - \overline {{\tau_\mathbb{G}}} } \right)ds} } \right).
\end{eqnarray*}
Thus, we have
\begin{eqnarray} \label{eq_5.7}
{\left( {\frac{1}{\kappa}} \right)^\prime }\left( {\frac{1}{{\tau - {\tau_\mathbb{G}}}}} \right) = \frac{{\left( {\overline \tau  - \overline {{\tau_\mathbb{G}}} } \right){{\sec }^2}\left( {\int {\left( {\overline \tau  - \overline {{\tau_\mathbb{G}}} } \right)ds} } \right)}}{{{c^2}}}\,,\,\,\,\,H = \tan \left( {\int {\left( {\overline \tau  - \overline {{\tau_\mathbb{G}}} } \right)ds} } \right),
\end{eqnarray}
where $ H $ is the harmonic curvature function of $ \gamma $. 
Moreover, by the equation (\ref{eq_5.6}), we get
\begin{eqnarray}\label{eq_5.8}
\tan \left( {\int {\left( {\overline \tau  - \overline {{\tau_\mathbb{G}}} } \right)ds} } \right) =  \pm \frac{{\sqrt {{a^2} - {c^2}} \sin (cs)}}{c},
\end{eqnarray} 
and so,
\begin{eqnarray}\label{eq_5.9}
{\sec ^2}\left( {\int {\left( {\overline \tau  - \overline {{\tau_\mathbb{G}}} } \right)ds} } \right) = \frac{{{c^2} + \left( {{a^2} - {c^2}} \right){{\sin }^2}(cs)}}{{{c^2}}}.
\end{eqnarray}
Now, by putting the equations (\ref{eq_5.6}) and (\ref{eq_5.9}) in the equation (\ref{eq_5.7}), we obtain
\begin{eqnarray*}
{\left( {\frac{1}{\kappa}} \right)^\prime }\left( {\frac{1}{{\tau - {\tau_\mathbb{G}}}}} \right) =  \pm \frac{{\sqrt {{a^2} - {c^2}} \cos (cs)}}{{{c^2}}}.
\end{eqnarray*}
By differentiating of the last equation and bearing the equations (\ref{eq_5.7}) and (\ref{eq_5.8}) in mind, we get
\begin{eqnarray*}
{\left( {{{\left( {\frac{1}{\kappa}} \right)}^\prime }\left( {\frac{1}{{\tau - {\tau_\mathbb{G}}}}} \right)} \right)^\prime } =  \mp \frac{{\sqrt {{a^2} - {c^2}} \sin (cs)}}{c} =  - H.
\end{eqnarray*}
Consequently, we obtain the equation (\ref{eq_2.19}). That is, $ \gamma $ is a spherical curve in $ \mathbb{G} $.
\end{proof}

Now, we give the following result which is obtained by Corollary \ref{crl_3.4} for a spherical curve in $ \mathbb{G} $ whose natural mate has non-zero constant curvature. 

\begin{corollary}\label{crl_5.2}
Let $\gamma$ be a Frenet curve in $\mathbb{G}$ with curvature $ \kappa $ and torsion $ \tau $, and also $ \beta $ be its natural mate with non-zero constant curvature $ \overline{{\kappa}} = c>0 $ and torsion $ \overline{{\tau}} $. Then $\gamma$ is a spherical curve whose left shift is lying a sphere with radius $(1/r)$ in $\mathfrak{g}$ if and only if $ \tau - {\tau_\mathbb{G}} = 0 $ or $ {\overline \tau  - \overline {{\tau_\mathbb{G}}} } =   \mp \kappa\sqrt {{r^2}{\kappa^2} - 1} $.
\end{corollary}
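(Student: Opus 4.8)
The plan is to deduce this at once from the spherical characterization of Corollary \ref{crl_3.4}, exploiting the extra hypothesis that $\overline{\kappa}$ is constant. First I would recall that, by Corollary \ref{crl_3.4}, $\gamma$ is a spherical curve whose left shift lies on a sphere of radius $(1/r)$ precisely when
\[
\frac{\overline{\kappa}'}{\overline{\kappa}} = \left(\overline{\tau}-\overline{\tau_\mathbb{G}}\right)H \pm \left(\tau-\tau_\mathbb{G}\right)\sqrt{r^2\kappa^2-1},
\]
where $H=(\tau-\tau_\mathbb{G})/\kappa$ is the harmonic curvature of $\gamma$ (equation (\ref{eq_3.4})). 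Since by assumption $\overline{\kappa}\equiv c>0$, we have $\overline{\kappa}'=0$, so the left-hand side vanishes and the identity reduces to $0=(\overline{\tau}-\overline{\tau_\mathbb{G}})H \pm (\tau-\tau_\mathbb{G})\sqrt{r^2\kappa^2-1}$.

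Next I would substitute $\tau-\tau_\mathbb{G}=\kappa H$ (which is just (\ref{eq_2.10})) into the second term and factor out $H$, obtaining
\[
0 = H\left[\left(\overline{\tau}-\overline{\tau_\mathbb{G}}\right) \pm \kappa\sqrt{r^2\kappa^2-1}\right].
\]
This product vanishes iff $H=0$, i.e. $\tau-\tau_\mathbb{G}=0$, or $\overline{\tau}-\overline{\tau_\mathbb{G}}=\mp\kappa\sqrt{r^2\kappa^2-1}$, which is exactly the stated dichotomy; combined with Corollary \ref{crl_3.4} this yields both implications at once in the non-degenerate branch. For the converse in the branch $\overline{\tau}-\overline{\tau_\mathbb{G}}=\mp\kappa\sqrt{r^2\kappa^2-1}$, I would simply reverse this computation: substituting back shows equation (\ref{eq_3.4}) holds with $\overline{\kappa}'=0$, so Corollary \ref{crl_3.4} gives that $\gamma$ is spherical. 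In the branch $\tau-\tau_\mathbb{G}=0$, I would argue separately that $H\equiv0$ forces $\overline{\kappa}=\kappa\sqrt{1+H^2}=\kappa$ by (\ref{eq_3.2}), hence $\kappa\equiv c$, and then Theorem \ref{teo_2.4}(i) gives directly that $\gamma$ is spherical (with left shift on the sphere of radius $1/c$, consistent with $\sqrt{r^2\kappa^2-1}=0$).

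The algebra is entirely routine; the only delicate point — and the one I expect to be the real obstacle — is the degenerate branch $\tau-\tau_\mathbb{G}=0$. There equation (\ref{eq_3.4}) collapses to the vacuous identity $0=0$ and no longer pins down the sphere radius, so Corollary \ref{crl_3.4} cannot be invoked as a black box; one has to notice that constancy of $\kappa$ is forced and fall back on part (i) of Theorem \ref{teo_2.4}, keeping the correspondence between the radius and $r$ (and the $\pm/\mp$ signs) coherent across the two cases.
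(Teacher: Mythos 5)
Your proposal is correct and follows essentially the same route as the paper: set $\overline{\kappa}'=0$ in equation (\ref{eq_3.4}), substitute $\tau-\tau_\mathbb{G}=\kappa H$, and factor to obtain $\left(\tau-\tau_\mathbb{G}\right)\left[\left(\overline{\tau}-\overline{\tau_\mathbb{G}}\right)\pm\kappa\sqrt{r^{2}\kappa^{2}-1}\right]=0$, reading the dichotomy off from Corollary \ref{crl_3.4}. Your separate treatment of the degenerate branch $\tau-\tau_\mathbb{G}=0$ via Theorem \ref{teo_2.4}(i) is in fact more explicit than the paper, which only remarks parenthetically that this case means $\kappa=1/r$ (and shares with your write-up the same looseness about whether the sphere radius is $r$ or $1/r$).
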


\begin{proof}
We assume that $ \gamma $ is a spherical curve whose left shift is lying a sphere with radius $(1/r)$ in $\mathfrak{g}$. Then, by Corollary \ref{crl_3.4}, the equation (\ref{eq_3.4}) is satisfied as a necessary and sufficient condition. Also, since the natural mate $ \beta $ of $ \gamma $ has non-zero constant curvature $ \overline{{\kappa}} = c>0 $, we obtain
\begin{eqnarray*}
\left( {\tau - {\tau_\mathbb{G}}} \right)\left( {\left( {\overline \tau  - \overline {{\tau_\mathbb{G}}} } \right) \pm \kappa\sqrt {{r^2}{\kappa^2} - 1} } \right) = 0.
\end{eqnarray*}
by using the equation (\ref{eq_3.4}). Consequently,  $ \gamma $ is a spherical curve in $\mathbb{G}$ iff either $ \tau = \tau_\mathbb{G} $ (i.e. $ \kappa = \frac{1}{r} $ ) or $ \overline \tau  - \overline {{\tau_\mathbb{G}}}  =  \mp \kappa\sqrt {{r^2}{\kappa^2} - 1}  $
\end{proof}

\section{Conjugate mates for Frenet curves in $ \mathbb{G} $}
In this section, we introduce conjugate mate of a Frenet curve in $ \mathbb{G} $. Moreover, we obtain corollaries for some special conjugate mate of a Frenet curve  in $ \mathbb{G} $. Especially, we give a characterization for the natural mate of a Frenet curve which is satisfying the condition $ {\tau-{\tau_\mathbb{G}}}=c \neq 0 $ where c is non-zero constant. 

\begin{definition} \label{defn_6.1}
	Let $\gamma:I\subset{\mathbb{R}} \to \mathbb{G}$ be a unit speed Frenet curve in $\mathbb{G}$ with the Frenet-Serret apparatus $ \{T,N,B,\kappa,\tau \}$. A unit speed curve ${\gamma^*}:I\subset{\mathbb{R}} \to \mathbb{G}$ with Frenet-Serret apparatus $ \left\{ {{T^*},{N^*},{B^*},{\kappa ^*},{\tau ^*}} \right\} $, is called the conjugate mate of the curve $\gamma$ if the curve ${\gamma^*}$ is tangent to the binormal vector of the curve $\gamma$ \mbox{(i.e. ${T}^*=B$)}.
\end{definition}

Let $ \gamma:I\subset{\mathbb{R}} \to \mathbb{G} $ be a unit speed Frenet curve in $\mathbb{G}$ with the Frenet-Serret apparatus $ \{T,N,B,\kappa,\tau \}$ and $ \tau_\mathbb{G}= \dfrac{1}{2}{\left\langle [T,N],B \right\rangle} $. Then, it is easily seen that $\left\{ {B,\, - {\text{sign}}(\tau  - {\tau _\mathbb{G}})N,\,{\text{sign}}(\tau  - {\tau _\mathbb{G}})T} \right\}$ is an orthonormal basis in Lie algebra $ \mathfrak{g} $ of $ \mathbb{G} $ along the curve $\gamma$ where $ {\tau-{\tau_\mathbb{G}}} \neq 0 $. By using equations (\ref{eq_2.9}), we have the following Frenet equations
\begin{eqnarray}\label{eq_6.1}
\left[ {\begin{array}{*{20}{c}}
	{B'} \\ 
	{ - {\text{sign}}(\tau  - {\tau _\mathbb{G}})N'} \\ 
	{{\text{sign}}(\tau  - {\tau _\mathbb{G}})T'} 
	\end{array}} \right] = \left[ {\begin{array}{*{20}{c}}
	0&{\left| {\tau  - {\tau _\mathbb{G}}} \right|}&0 \\ 
	{ - \left| {\tau  - {\tau _\mathbb{G}}} \right|}&0&\kappa  \\ 
	0&{ - \kappa }&0 
	\end{array}} \right]\left[ {\begin{array}{*{20}{c}}
	B \\ 
	{ - {\text{sign}}(\tau  - {\tau _\mathbb{G}})N} \\ 
	{{\text{sign}}(\tau  - {\tau _\mathbb{G}})T} 
	\end{array}} \right].
\end{eqnarray}
Under the condition $ {\tau-{\tau_\mathbb{G}}} \neq 0 $, the equations (\ref{eq_6.1}) means that there exists a unit speed Frenet curve $ {\gamma^*} $  in $\mathbb{G}$ with Frenet-Serret apparatus $ \left\{ {{T^*},{N^*},{B^*},{\kappa ^*},{\tau ^*}} \right\} $ which is satisfying the following equations 
\begin{eqnarray} \label{eq_6.2}
\left\{ {\begin{array}{*{20}{l}}
	{{T^*} = B,\;\,\,{N^*} =  - {\text{sign}}(\tau  - {\tau _\mathbb{G}})N,\;\,\,{B^*} = {\text{sign}}(\tau  - {\tau _\mathbb{G}})T,} \\ 
	{{\kappa ^*} = \left| {\tau  - {\tau _\mathbb{G}}} \right|,} \\ 
	{{\tau ^*} - {\tau _\mathbb{G}}^* = \kappa ,} 
	\end{array}} \right.
\end{eqnarray}
where ${\tau_\mathbb{G}}^* = \dfrac{1}{2}\left\langle [{T}^*,{N}^*],{B}^* \right\rangle$. Thus, we have ${T}^*=B $ along the curve $ \gamma $ by the equations (\ref{eq_6.2}).
That is, the curve $ {\gamma^*} $ is the conjugate mate of the curve $ \gamma $ by Definition \ref{defn_6.1}. Moreover, 
\begin{eqnarray*}
{\tau _\mathbb{G}}^* = \frac{1}{2}\left\langle {\left[ {B,\, - {\text{sign}}(\tau  - {\tau _\mathbb{G}})N} \right],\,{\text{sign}}(\tau  - {\tau _\mathbb{G}})T} \right\rangle  = {\tau _\mathbb{G}},
\end{eqnarray*}
and so,
\begin{eqnarray} \label{eq_6.3}
{\tau ^*} = \kappa  + {\tau _\mathbb{G}}.
\end{eqnarray}
Thus, we obtain the following theorem for conjugate mate of a Frenet curve in $ \mathbb{G} $.

\begin{theorem} \label{teo_6.1}
Let $\gamma$ be a unit speed Frenet curve in $\mathbb{G}$ with Frenet-Serret apparatus $ \{T,N,B,\kappa,\tau \}$ and $ \tau_\mathbb{G}= \dfrac{1}{2}{\left\langle [T,N],B \right\rangle} $. Then $\gamma^*$ is conjugate mate of the curve $\gamma$ iff there exists a unit speed curve $\gamma^*$ in $\mathbb{G}$ with Frenet-Serret apparatus $ \left\{ {{T^*},{N^*},{B^*},{\kappa ^*},{\tau ^*}} \right\} $ which is given by the equations (\ref{eq_6.2}) and (\ref{eq_6.3}), where $ {\tau-{\tau_\mathbb{G}}} \neq 0 $.
\end{theorem}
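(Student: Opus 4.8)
The plan is to mirror the derivation that yielded Theorem~\ref{teo_3.1}, now with the orthonormal frame $\left\{ {B,\, - \sign(\tau  - {\tau _\mathbb{G}})N,\,\sign(\tau  - {\tau _\mathbb{G}})T} \right\}$ in place of $\{N,\Omega^*/\omega,\Omega/\omega\}$. First I would check that, under the standing hypothesis $\tau-\tau_\mathbb{G}\neq 0$, this triple is a positively oriented orthonormal basis of $\mathfrak{g}$ along $\gamma$: orthonormality is immediate from orthonormality of $\{T,N,B\}$, while the orientation follows from a short sign count, since the reordering $T,N,B\mapsto B,N,T$ is odd and the two factors $-\sign(\tau-\tau_\mathbb{G})$ and $\sign(\tau-\tau_\mathbb{G})$ contribute another $-1$, so the scalar triple product remains $+1$. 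This orientation check is what legitimizes treating the resulting object as a genuine Frenet frame, and it is the step one should not skip.

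For the forward implication, suppose $\gamma^*$ is the conjugate mate, i.e. $T^*=B$. Differentiating with the usual derivative $'$ and using the system~(\ref{eq_2.9}) gives $(T^*)'=B'=-(\tau-\tau_\mathbb{G})N$; comparing with $(T^*)'=\kappa^*N^*$ and using $\kappa^*>0$ forces $\kappa^*=|\tau-\tau_\mathbb{G}|$ and $N^*=-\sign(\tau-\tau_\mathbb{G})N$, the first two lines of~(\ref{eq_6.2}). Differentiating $N^*$ again via~(\ref{eq_2.9}) yields $(N^*)'=-\sign(\tau-\tau_\mathbb{G})\left(-\kappa T+(\tau-\tau_\mathbb{G})B\right)$; matching this against $(N^*)'=-\kappa^*T^*+(\tau^*-\tau_\mathbb{G}^*)B^*$ and using the already-known $\kappa^*$ and $T^*$ identifies $B^*=\sign(\tau-\tau_\mathbb{G})T$ and $\tau^*-\tau_\mathbb{G}^*=\kappa$, completing~(\ref{eq_6.2}). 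Conversely, if a unit speed curve $\gamma^*$ with apparatus~(\ref{eq_6.2})--(\ref{eq_6.3}) exists, then $T^*=B$, so by Definition~\ref{defn_6.1} it is the conjugate mate of $\gamma$; and existence is supplied by the fundamental theorem of curves applied to the linear system~(\ref{eq_6.1}), whose coefficient matrix has the skew-symmetric Frenet shape with positive off-diagonal entries $|\tau-\tau_\mathbb{G}|$ and $\kappa$.

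It remains to identify $\tau^*$ itself, and the only genuinely delicate computation is evaluating $\tau_\mathbb{G}^*=\tfrac12\langle[T^*,N^*],B^*\rangle$. Substituting $T^*=B$, $N^*=-\sign(\tau-\tau_\mathbb{G})N$, $B^*=\sign(\tau-\tau_\mathbb{G})T$ and using bilinearity of the bracket, the two sign factors combine to $-1$, so $\tau_\mathbb{G}^*=-\tfrac12\langle[B,N],T\rangle=\tfrac12\langle[N,B],T\rangle$; the identity $[N,B]=2\tau_\mathbb{G}T$ from Proposition~\ref{prp2_01} then gives $\tau_\mathbb{G}^*=\tau_\mathbb{G}$, whence $\tau^*=\kappa+\tau_\mathbb{G}$, which is~(\ref{eq_6.3}). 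The expected obstacle is nothing conceptual but the consistent propagation of the $\sign(\tau-\tau_\mathbb{G})$ factors through the bracket and through the Frenet comparisons; once those signs are tracked uniformly, the remainder is the same ODE and fundamental-theorem packaging already used for the natural mate.
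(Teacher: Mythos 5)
Your proposal is correct and follows essentially the same route as the paper: differentiate the frame $\left\{ B,\,-\sign(\tau-\tau_\mathbb{G})N,\,\sign(\tau-\tau_\mathbb{G})T \right\}$ via (\ref{eq_2.9}) to obtain the skew-symmetric system (\ref{eq_6.1}), read off the apparatus (\ref{eq_6.2}), and compute $\tau_\mathbb{G}^*=\tau_\mathbb{G}$ from Proposition \ref{prp2_01} to get (\ref{eq_6.3}). Your explicit orientation check and the two-way matching of Frenet equations are slightly more detailed than the paper's presentation, but they are refinements of the same argument rather than a different approach.
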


By using Theorem \ref{teo_6.1}, we get easily following results for conjugate mate ${\gamma^*}$ of a Frenet curve $\gamma$ which is a general helix or slant helix.   

\begin{corollary} \label{crl_6.1}
Let $\gamma$ be a unit speed Frenet curve in $\mathbb{G}$. Then $\gamma$ is a general helix if and only if the conjugate mate ${\gamma^*}$ of $\gamma$ is a general helix.
\end{corollary}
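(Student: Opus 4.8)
The plan is to reduce the statement to the harmonic-curvature characterization of general helices, Theorem \ref{teo_2.1}: a unit speed curve in $\mathbb{G}$ is a general helix exactly when its harmonic curvature function is constant. So the corollary follows once we express the harmonic curvature $H^*$ of the conjugate mate $\gamma^*$ in terms of the data $\{\kappa,\tau,\tau_\mathbb{G}\}$ of $\gamma$ and check that $H^*$ is constant precisely when $H=(\tau-\tau_\mathbb{G})/\kappa$ is.

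First I would invoke Theorem \ref{teo_6.1}. Since $\gamma$ admits a conjugate mate we are in the regime $\tau-\tau_\mathbb{G}\neq 0$, and the Frenet--Serret apparatus of $\gamma^*$ is given by (\ref{eq_6.2})--(\ref{eq_6.3}): $\kappa^*=|\tau-\tau_\mathbb{G}|$, $\tau_\mathbb{G}^*=\tau_\mathbb{G}$, and $\tau^*-\tau_\mathbb{G}^*=\kappa$. Hence, by Definition \ref{defn_2.2}, the harmonic curvature of $\gamma^*$ is
\[
H^*=\frac{\tau^*-\tau_\mathbb{G}^*}{\kappa^*}=\frac{\kappa}{|\tau-\tau_\mathbb{G}|}=\frac{1}{|H|}.
\]

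Next comes the only point needing a little care: because $\tau-\tau_\mathbb{G}$ is continuous and nowhere zero on $I$, $\sign(\tau-\tau_\mathbb{G})$ is constant, so $H$ has constant sign and $|H|=\kappa^{-1}|\tau-\tau_\mathbb{G}|$ is a smooth positive function; in particular $H\neq 0$ everywhere, so $1/|H|$ makes sense. Consequently $H$ is constant $\iff$ $|H|$ is constant $\iff$ $H^*=1/|H|$ is constant. Applying Theorem \ref{teo_2.1} to $\gamma$ and to $\gamma^*$ — the latter being a Frenet curve by Theorem \ref{teo_6.1}, so that Theorem \ref{teo_2.1} is applicable to it — turns ``$H$ constant'' into ``$\gamma$ is a general helix'' and ``$H^*$ constant'' into ``$\gamma^*$ is a general helix,'' which yields the claimed equivalence.

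I do not anticipate any real obstacle: the computation of $H^*$ is immediate from (\ref{eq_6.2})--(\ref{eq_6.3}), and the only subtlety is the sign bookkeeping ensuring that constancy of $H$ and of $|H|$ are equivalent, together with the observation that $\gamma^*$ genuinely qualifies as a Frenet curve — both already secured by the standing hypothesis $\tau-\tau_\mathbb{G}\neq 0$ and by Theorem \ref{teo_6.1}.
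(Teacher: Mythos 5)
Your proposal is correct and follows essentially the same route as the paper: both compute the harmonic curvature of the conjugate mate from (\ref{eq_6.2})--(\ref{eq_6.3}), obtaining $H^*=\kappa/\lvert\tau-\tau_\mathbb{G}\rvert=\sign(\tau-\tau_\mathbb{G})/H$, and then conclude via the harmonic-curvature characterization of general helices (Theorem \ref{teo_2.1}). Your explicit remark that $\sign(\tau-\tau_\mathbb{G})$ is constant on $I$ (so that constancy of $H$ and of $H^*$ are genuinely equivalent) is a small point the paper leaves implicit, but it is the same argument.
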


\begin{proof}
Let the harmonic curvatures of $ \gamma $ and $ \gamma^* $ be $ H $ and $ H^* $, respectively. Then, by using (\ref{eq_6.2}), we have
\begin{eqnarray} \label{eq_6.4}
{H^*} = \frac{{{\tau ^*} - {\tau _\mathbb{G}}^*}}{{{\kappa ^*}}} = \frac{\kappa }{{\left| {\tau  - {\tau _\mathbb{G}}} \right|}} = {\text{sign}}(\tau  - {\tau _\mathbb{G}})\frac{1}{H}.
\end{eqnarray}
Thus, the proof is clear.
\end{proof}

\begin{corollary} \label{crl_6.2}
Let $\gamma$ be a unit speed Frenet curve in $\mathbb{G}$. Then $\gamma$ is a slant helix if and only if the conjugate mate ${\gamma^*}$ of $\gamma$ is a slant helix.
\end{corollary}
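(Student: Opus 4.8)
The plan is to apply the slant helix characterization of Theorem~\ref{teo_2.2} to both $\gamma$ and its conjugate mate $\gamma^*$, and then to compare the two resulting functions using the curvature/torsion relations already recorded in Theorem~\ref{teo_6.1} and in the proof of Corollary~\ref{crl_6.1}.

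First I would fix notation. Since $T^* = B$ and $\gamma$ is unit speed, the curve $\gamma^*$ is unit speed with the same arc-length parameter $s$, so every derivative below is taken with respect to $s$. Write $\varepsilon := \sign(\tau - \tau_\mathbb{G})$; on the connected interval on which the conjugate mate is defined one has $\tau - \tau_\mathbb{G} \neq 0$, hence $\varepsilon$ is a \emph{constant} function. By the equations (\ref{eq_6.2}) we have $\kappa^* = |\tau - \tau_\mathbb{G}| > 0$, and since $H = (\tau - \tau_\mathbb{G})/\kappa$ with $\kappa > 0$ this gives $\varepsilon = \sign H$ and $\kappa^* = \kappa\,|H| = \kappa\,\varepsilon H$. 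Moreover equation (\ref{eq_6.4}) records $H^* = \varepsilon/H$, where $H^*$ denotes the harmonic curvature of $\gamma^*$.

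Next I would compute the slant helix function $\sigma^*$ of $\gamma^*$ from (\ref{eq_2.11}) in terms of $\sigma$. Since $\varepsilon$ is constant, $(H^*)' = -\varepsilon H'/H^2$; also $1 + (H^*)^2 = (1+H^2)/H^2$, so $\bigl(1+(H^*)^2\bigr)^{3/2} = (1+H^2)^{3/2}/|H|^3 = \varepsilon(1+H^2)^{3/2}/H^3$. Substituting these into the definition of $\sigma^*$ and using $\varepsilon^2 = 1$ gives
\[
\sigma^* \;=\; \frac{\kappa^*\,\bigl(1+(H^*)^2\bigr)^{3/2}}{(H^*)'}
\;=\; \frac{(\kappa\varepsilon H)\cdot \varepsilon(1+H^2)^{3/2}/H^3}{-\varepsilon H'/H^2}
\;=\; -\varepsilon\,\frac{\kappa(1+H^2)^{3/2}}{H'}
\;=\; -\varepsilon\,\sigma .
\]
Since $\varepsilon$ is a nonzero constant, $\sigma^*$ is a constant function if and only if $\sigma$ is one; by Theorem~\ref{teo_2.2} this is precisely the assertion that $\gamma^*$ is a slant helix if and only if $\gamma$ is a slant helix.

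The only place calling for care is the sign bookkeeping: one must notice that $\kappa^* = \kappa\,|H| = \kappa\,\varepsilon H$ rather than simply $\kappa H$, and that $\varepsilon$, being constant, passes freely through the differentiation in $(H^*)'$ and through the $3/2$-power. Once these points are settled the computation is a one-line simplification, so I do not anticipate any genuine obstacle beyond this sign accounting.
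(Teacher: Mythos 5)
Your proposal is correct and follows essentially the same route as the paper: compute the slant-helix function $\sigma^*$ of $\gamma^*$ from Theorem~\ref{teo_2.2} using the relations (\ref{eq_6.2}) and (\ref{eq_6.4}), and observe it is a constant multiple of $\sigma$. Your more careful sign bookkeeping yields $\sigma^* = -\varepsilon\,\sigma$ with $\varepsilon=\sign(\tau-\tau_\mathbb{G})$ constant, whereas the paper simply writes $\sigma^*=-\sigma$; the discrepancy is only the constant factor $\varepsilon$, which does not affect the equivalence.
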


\begin{proof}
Let harmonic curvatures of $ \gamma $ and $ \gamma^* $ be $ H $ and $ H^* $, respectively. Then, by using (\ref{eq_6.2}) and (\ref{eq_6.4}) , we get
\begin{eqnarray*}
{\sigma ^*} = \frac{{{\kappa ^*}{{\left( {{{\left( {{H^*}} \right)}^2} + 1} \right)}^{3/2}}}}{{{{\left( {{H^*}} \right)}^\prime }}} = \frac{{ - \kappa {{\left( {{H^2} + 1} \right)}^{3/2}}}}{{H'}} =  - \sigma.
\end{eqnarray*}
Hence, the proof is clear.
\end{proof}

Now, we give the following results by Theorem \ref{teo_3.1} and Theorem \ref{teo_6.1} for involute-evolute curves (see \cite{okuyucu}) and Bertrand curve couple (see \cite{okuyucu_2016})  . 

\begin{corollary}\label{crl_6.3}
Let $\gamma:I\subset{\mathbb{R}} \to \mathbb{G}$ be a unit speed Frenet curve with $ {\tau-{\tau_\mathbb{G}}} \neq 0 $. Then there exists a unique pair of a unit speed curve $\beta:I\subset{\mathbb{R}} \to \mathbb{G}$	and a Frenet curve ${\gamma}^*:I\subset{\mathbb{R}} \to \mathbb{G}$, such that the curves $\gamma$, $\beta$ and ${\gamma}^*$ are mutually orthogonal curves (i.e. involute-evolute curves).
\end{corollary}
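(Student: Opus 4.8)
The plan is to realise the pair explicitly and then read off the orthogonality directly from the Frenet frame of $\gamma$. Take $\beta$ to be the natural mate of $\gamma$, which exists by Theorem~\ref{teo_3.1} (and the construction preceding it), and take $\gamma^*$ to be the conjugate mate of $\gamma$, which exists by Theorem~\ref{teo_6.1}. The hypothesis $\tau-\tau_\mathbb{G}\neq 0$ enters precisely in the second step: it is what guarantees, via (\ref{eq_6.2}), that $\kappa^*=|\tau-\tau_\mathbb{G}|>0$ and $\tau^*-{\tau_\mathbb{G}}^*=\kappa>0$, so that $\gamma^*$ is a genuine Frenet curve; the natural mate $\beta$ requires no extra hypothesis since $\kappa>0$ by assumption. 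This settles existence.

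Next I would verify the mutual orthogonality. By Definition~\ref{defn_3.1} and Definition~\ref{defn_6.1}, made explicit in (\ref{eq_3.2}) and (\ref{eq_6.2}), one has $\overline T=N$ and $T^*=B$ along $\gamma$. Hence, at each value of the common arc-length parameter $s$, the three velocity fields
\[
\gamma'(s)=T(s),\qquad \beta'(s)=N(s),\qquad (\gamma^*)'(s)=B(s)
\]
are exactly the members of the Frenet frame $\{T,N,B\}$ of $\gamma$, which is orthonormal; therefore $\langle\gamma',\beta'\rangle=\langle\gamma',(\gamma^*)'\rangle=\langle\beta',(\gamma^*)'\rangle=0$, i.e.\ $\gamma$, $\beta$, $\gamma^*$ are mutually orthogonal. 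Moreover, $\overline T=N$ is precisely the relation defining an involute-evolute couple in the sense of \cite{okuyucu}, and $T^*=B$ together with $N^*=-\sign(\tau-\tau_\mathbb{G})N$ and $B^*=\sign(\tau-\tau_\mathbb{G})T$ from (\ref{eq_6.2}) exhibits the remaining two pairs as involute-evolute couples; this accounts for the parenthetical terminology.

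For uniqueness I would argue that in any triple $\gamma,\beta,\gamma^*$ of mutually orthogonal unit speed curves carrying the same parameter $s$, the three velocities form an orthonormal basis at every $s$; since $\gamma'=T$ is prescribed, the pair $(\beta',(\gamma^*)')$ is determined only up to a rotation in the normal plane of $\gamma$, and the extra involute-evolute requirement --- that $\beta'$ be collinear with the principal normal $N$ and $(\gamma^*)'$ with the binormal $B$ of $\gamma$ --- forces $\beta'=N$ and $(\gamma^*)'=B$. By uniqueness of integral curves, $\beta$ and $\gamma^*$ are then determined once their initial points are fixed, so the pair is unique and coincides with the natural and conjugate mates constructed above. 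The one subtlety here is not computational but a matter of formulation: one must specify which normalisations (initial points, orientations) are held fixed so that uniqueness has content, and check that orthogonality together with the involute-evolute condition really forces the velocities to be $N$ and $B$ rather than an arbitrary orthonormal pair completing $T$; granting that, the corollary follows at once from Theorems~\ref{teo_3.1} and~\ref{teo_6.1} and the orthonormality of $\{T,N,B\}$.
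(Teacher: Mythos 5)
Your construction --- take $\beta$ to be the natural mate and $\gamma^*$ the conjugate mate, with $\tau-\tau_\mathbb{G}\neq 0$ guaranteeing the latter, and read the mutual orthogonality of $\gamma'，=T$, $\beta'=N$, $(\gamma^*)'=B$ off the orthonormal Frenet frame --- is exactly how the paper obtains this corollary, which it states as an immediate consequence of Theorem~\ref{teo_3.1} and Theorem~\ref{teo_6.1} without a separate written proof. Your uniqueness discussion goes beyond what the paper records and correctly isolates the only delicate point: uniqueness has content only once the involute--evolute condition is read as pinning the tangents to $N$ and $B$ (rather than an arbitrary rotating orthonormal pair in the normal plane) and initial points are fixed, which is the sense implicitly intended here.
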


\begin{corollary}\label{crl_6.4}
Let $\gamma$ be a unit speed Frenet curve and ${\gamma}^*$ be its conjugate mate in $\mathbb{G}$. Then  $\gamma$ and ${\gamma}^*$ are Bertrand curve couple in $\mathbb{G}$.
\end{corollary}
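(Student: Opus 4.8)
The plan is to extract the Frenet--Serret data of the conjugate mate $\gamma^{*}$ directly from Theorem~\ref{teo_6.1} and to match it against the notion of a Bertrand curve couple in $\mathbb{G}$ used in \cite{okuyucu_2016}: two unit speed Frenet curves, equipped with a regular parameter correspondence, form a Bertrand couple when their principal normal vector fields are linearly dependent at corresponding points (equivalently, when they share a common principal normal line). We keep the standing hypothesis $\tau-\tau_\mathbb{G}\neq 0$, under which $\gamma^{*}$ is defined.

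First I would quote (\ref{eq_6.2}) and (\ref{eq_6.3}): the Frenet--Serret apparatus of $\gamma^{*}$ is
\[
 T^{*}=B,\qquad N^{*}=-\sign(\tau-\tau_\mathbb{G})\,N,\qquad B^{*}=\sign(\tau-\tau_\mathbb{G})\,T,\qquad \kappa^{*}=|\tau-\tau_\mathbb{G}|>0,
\]
together with $\tau^{*}=\kappa+\tau_\mathbb{G}$. Since $\kappa^{*}>0$ the curve $\gamma^{*}$ is a genuine Frenet curve, and the relation $N^{*}=-\sign(\tau-\tau_\mathbb{G})\,N$ says exactly that the principal normal of $\gamma^{*}$ at the parameter value $s$ is a nonzero scalar multiple of the principal normal $N(s)$ of $\gamma$. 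Hence the principal normals of $\gamma$ and $\gamma^{*}$ are linearly dependent at corresponding points, which is precisely the defining condition for $(\gamma,\gamma^{*})$ to be a Bertrand curve couple in $\mathbb{G}$; this is the core of the argument.

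Next I would add the complementary observation that pins down the classical Bertrand picture, namely that the angle between corresponding tangents is constant: from $T^{*}=B$ we get $\langle T^{*},T\rangle=\langle B,T\rangle=0$ along $\gamma$, so the tangent fields of $\gamma$ and $\gamma^{*}$ are everywhere orthogonal, a constant angle $\pi/2$ (equivalently, the whole frame $\{T^{*},N^{*},B^{*}\}$ is a relabelling of $\{T,N,B\}$). This is compatible with Corollary~\ref{crl_6.3}, where $\gamma$, $\beta$ and $\gamma^{*}$ were shown to be mutually orthogonal, and it certifies that $\gamma^{*}$ is a genuine Bertrand mate rather than a reparametrisation of $\gamma$.

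The step I expect to be the only delicate one is the bookkeeping around the definition. The conjugate mate $\gamma^{*}$ is realised as the integral curve of (an extension of) the binormal field $B$, not as a pointwise offset of $\gamma$, so "common principal normal line'' must be read in the sense of \cite{okuyucu_2016}, i.e. as linear dependence of the principal normal vectors under the parameter correspondence $s\leftrightarrow s$; if instead one prefers the curvature--torsion characterisation of Bertrand curves in $\mathbb{G}$, one would substitute $\kappa^{*}=|\tau-\tau_\mathbb{G}|$ and $\tau^{*}-{\tau_\mathbb{G}}^{*}=\kappa$ from (\ref{eq_6.2}) into that criterion. Once the convention is fixed, equation (\ref{eq_6.2}) makes the proof immediate, with the orthogonality of the tangents as the only extra ingredient.
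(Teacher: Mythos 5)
Your proposal is correct and matches the paper's (implicit) reasoning: the paper states Corollary~\ref{crl_6.4} with no separate proof, treating it as immediate from Theorem~\ref{teo_6.1}, since the relation $N^{*}=-\sign(\tau-\tau_\mathbb{G})\,N$ in (\ref{eq_6.2}) gives the linear dependence of principal normals that defines a Bertrand couple in the sense of \cite{okuyucu_2016}. Your additional remarks on the orthogonality of tangents and on reading the Bertrand condition at corresponding parameter values are consistent with the paper and do not change the argument.
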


Now, we give the following characterization for the natural mate of a Frenet curve which is satisfying the condition $ {\tau-{\tau_\mathbb{G}}}=c \neq 0 $ where c is non-zero constant. 

\begin{theorem}\label{teo_6.2}
Let $\gamma$ be a unit speed Frenet curve with Frenet-Serret apparatus $ \{T,N,B,\kappa,\tau \}$ and $ \beta $ be its natural mate in $\mathbb{G}$. If ${\tau-{\tau_\mathbb{G}}}=c \neq 0 $ is a non-zero constant such that $ \tau_\mathbb{G}= \dfrac{1}{2}{\left\langle [T,N],B \right\rangle} $ then the natural mate $\beta$ is a spherical curve whose left shift is lying a sphere with radius $(1/c)$ in $\mathfrak{g}$.
\end{theorem}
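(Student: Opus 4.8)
The plan is to read off the Frenet data of $\beta$ from the natural-mate formulas (\ref{eq_3.2}) and then verify the spherical characterization of Theorem \ref{teo_2.4} directly for $\beta$ with $r=1/c$; this parallels the proof of Theorem \ref{teo_4.1}, with the roles of $\kappa$ and $\tau-\tau_\mathbb{G}$ interchanged. Since $\tau-\tau_\mathbb{G}=c$ is a nonzero constant, the harmonic curvature of $\gamma$ is $H=c/\kappa$ and the length of the extrinsic Darboux vector is $\omega=\sqrt{c^2+\kappa^2}$, so by (\ref{eq_3.2}) we have $\overline{\kappa}=\omega=\sqrt{c^2+\kappa^2}$ and $\overline{\tau}-\overline{\tau_\mathbb{G}}=H'/(1+H^2)$. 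Substituting $H=c/\kappa$, hence $1+H^2=(c^2+\kappa^2)/\kappa^2$ and $H'=-c\kappa'/\kappa^2$, I would obtain $\overline{\tau}-\overline{\tau_\mathbb{G}}=-c\kappa'/(c^2+\kappa^2)$.

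In the generic case $\kappa'\neq 0$ (equivalently $\overline{\tau}\neq\overline{\tau_\mathbb{G}}$), I would then compute $(1/\overline{\kappa})'=-\kappa\kappa'(c^2+\kappa^2)^{-3/2}$ and hence
\begin{eqnarray*}
\left(\frac{1}{\overline{\kappa}}\right)'\frac{1}{\overline{\tau}-\overline{\tau_\mathbb{G}}} &=& \frac{\kappa}{c\sqrt{c^2+\kappa^2}},\\
\left(\left(\frac{1}{\overline{\kappa}}\right)'\frac{1}{\overline{\tau}-\overline{\tau_\mathbb{G}}}\right)^2+\left(\frac{1}{\overline{\kappa}}\right)^2 &=& \frac{\kappa^2}{c^2(c^2+\kappa^2)}+\frac{1}{c^2+\kappa^2}=\frac{1}{c^2},
\end{eqnarray*}
which is exactly (\ref{eq_2.18}) applied to $\beta$ with $r=1/c$. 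By Theorem \ref{teo_2.4}(ii) this would show that $\beta$ is a spherical curve whose left shift lies on a sphere of radius $1/c$ in $\mathfrak{g}$.

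The main obstacle I anticipate is the degenerate case in which $\kappa$ is constant: then $\overline{\tau}-\overline{\tau_\mathbb{G}}=0$ and $\overline{\kappa}=\sqrt{c^2+\kappa^2}$ is constant, so Theorem \ref{teo_2.4}(ii) does not apply and $\beta$ is a circle. I would dispose of it by noting that, using the freedom in the initial condition of the left shift (Proposition \ref{prp_2.01}), this circle can be inscribed in a central sphere of radius $1/c\geq 1/\sqrt{c^2+\kappa^2}$. An alternative that sidesteps the case split entirely is to invoke the conjugate mate: by (\ref{eq_6.2}) the conjugate mate $\gamma^*$ has constant curvature $\kappa^*=|c|$, so Theorem \ref{teo_4.1} applies to $\gamma^*$ and yields that its natural mate is spherical of radius $1/|c|$; since the natural mate of $\gamma^*$ is an integral curve of $N^*=-\sign(\tau-\tau_\mathbb{G})\,N$, it coincides with $\beta$ up to orientation, and orientation reversal affects neither sphericality nor the radius of the sphere.
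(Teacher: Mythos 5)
Your proposal is correct, and its primary argument takes a genuinely different route from the paper. You verify the spherical criterion of Theorem \ref{teo_2.4} directly for $\beta$: with $H=c/\kappa$ the formulas (\ref{eq_3.2}) give $\overline{\kappa}=\sqrt{c^2+\kappa^2}$ and $\overline{\tau}-\overline{\tau_\mathbb{G}}=-c\kappa'/(c^2+\kappa^2)$, and your algebra leading to $\bigl((1/\overline{\kappa})'\,(\overline{\tau}-\overline{\tau_\mathbb{G}})^{-1}\bigr)^2+(1/\overline{\kappa})^2=1/c^2$ checks out, which is (\ref{eq_2.18}) with $r=1/|c|$ (the paper writes $1/c$, tacitly taking $c>0$). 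The paper instead passes through the conjugate mate: it shows via (\ref{eq_6.5})--(\ref{eq_6.7}) that $\beta$ is congruent to the natural mate $\overline{\gamma^*}$ of $\gamma^*$, observes that $\kappa^*=|\tau-\tau_\mathbb{G}|=|c|$ is constant, and then invokes Theorem \ref{teo_4.1}; this is exactly the ``alternative'' you sketch at the end, so you have in effect reproduced the paper's proof as a backup while offering a shorter, self-contained computation as your main one. What each buys: your direct route avoids the congruence bookkeeping and makes the radius $1/|c|$ visible immediately, while the paper's route recycles Theorem \ref{teo_4.1} and displays the natural/conjugate-mate duality that the paper wants to emphasize. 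The only delicate point is the non-generic behaviour of $\kappa'$: at points where $\kappa'=0$ one has $\overline{\tau}=\overline{\tau_\mathbb{G}}$ and Theorem \ref{teo_2.4}(ii) does not literally apply (your displayed quotient extends continuously, but strictly you argue only where $\kappa'\neq0$), and when $\kappa$ is constant the left shift of $\beta$ is a circle of radius $1/\sqrt{c^2+\kappa^2}<1/|c|$, so placing it on the central sphere of radius $1/|c|$ via the initial-condition freedom of Proposition \ref{prp_2.01} is a sensible reading, though it sits slightly at odds with the literal ``iff'' of Theorem \ref{teo_2.4}(i). This caveat is not a defect relative to the paper: the paper's appeal to Theorem \ref{teo_4.1} silently runs into the same degenerate sub-case, which you at least flag and address explicitly.
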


\begin{proof}
Let $ \{T,N,B,\kappa,\tau \}$ be the Frenet-Serret apparatus of a unit speed Frenet curve $ \gamma $ and $\tau_\mathbb{G}$ be the its Lie torsion in $\mathbb{G} $. We suppose that ${\tau-{\tau_\mathbb{G}}}=c \neq 0 $ is a non-zero constant. By Theorem \ref{teo_6.1}, the conjugate mate ${\gamma}^*$ of the curve $ \gamma $ has the Frenet-Serret apparatus $ \left\{ {{T^*},{N^*},{B^*},{\kappa ^*},{\tau ^*}} \right\} $ which is satisfying by the equations (\ref{eq_6.2}) and (\ref{eq_6.3}) where ${\tau_\mathbb{G}}^* = \dfrac{1}{2}\left\langle [{T}^*,{N}^*],{B}^*\right\rangle$. 

Now, let $ \left\{ {\overline {{T}} ,\overline {{N}} ,\overline {{B}} ,\overline {{\kappa}} ,\overline {{\tau}} } \right\}$ be the Frenet-Serret apparatus of the natural mate $ \beta $ of $ \gamma $ and $ \left\{ {\overline {{T^*}} ,\overline {{N^*}} ,\overline {{B^*}} ,\overline {{\kappa^*}} ,\overline {{\tau^*}} } \right\} $ be the Frenet-Serret apparatus of the natural mate $ \overline{\gamma^*} $ of the conjugate mate $ \gamma^* $ with the equations (\ref{eq_3.2}). Then, by using Theorem \ref{teo_3.1} jointly with the equations (\ref{eq_2.16}), (\ref{eq_6.2}) and (\ref{eq_6.4}), we get
\begin{eqnarray}
\overline {{\kappa^*}}  &=& \sqrt {{{\left( {{\tau^*} - {\tau_\mathbb{G}}^*} \right)}^2} + {\kappa^*}}  = \sqrt {{\kappa^2} + {{\left( {\tau - {\tau_\mathbb{G}}} \right)}^2}}  = \overline \kappa \label{eq_6.5} \\
\overline {{\tau^*}}  - \overline {{\tau_\mathbb{G}}^*}  &=& \frac{{{{\left( {{H^*}} \right)}^\prime }}}{{1 + {{\left( {{H^*}} \right)}^2}}} =  - \sign\left( {\tau - {\tau_\mathbb{G}}} \right)\frac{{H'}}{{1 + {H^2}}} =  - \sign\left( {\tau - {\tau_\mathbb{G}}} \right)\left( {\overline \tau  - \overline {{\tau_\mathbb{G}}} } \right) \label{eq_6.6}
\end{eqnarray}
where $ \overline{\tau_\mathbb{G}} = \dfrac{1}{2}\left\langle [\overline{T},\overline{N}],\overline{B} \right\rangle $. Moreover, bearing Theorem \ref{teo_3.1} and Theorem \ref{crl_6.1} in mind, jointly with (\ref{eq_2.14}), (\ref{eq_2.16}) and (\ref{eq_2.17}), we obtain 
\begin{eqnarray}\label{eq_6.7}
\overline {{T^*}}  =  - \sign\left( {\tau  - {\tau _\mathbb{G}}} \right)\overline T \,,\,\,\,\,\overline {{N^*}}  =  - \sign\left( {\tau  - {\tau _\mathbb{G}}} \right)\overline N \,,\,\,\,\,\overline {{B^*}}  = \overline B.
\end{eqnarray} 
Thus, we conclude that the curve $ \beta $ and the curve $ \overline{\gamma^*} $ are congruent from the equations (\ref{eq_6.5}), (\ref{eq_6.6}) and (\ref{eq_6.7}). Also, $ {\tau  - {\tau _\mathbb{G}}} = c $ is non-zero constant from hypothesis, so the curvature $ {{\kappa ^*}}$ of the Frenet curve $ \overline{\gamma^*} $ is non-zero constant $ \left| c \right|  > 0 $ by using (\ref{eq_6.2}). Finally, if we apply Theorem \ref{teo_4.1} and take into account that the curves $ \beta $ and $ \overline{\gamma^*} $ are congruent, the proof is completed.
\end{proof}

Finally, when $ \mathbb{G} $ is Euclidean 3-space as a commutative group  (i.e. $ \tau_\mathbb{G} = 0 $), we give graphics of some special curves jointly with their natural mate and conjugate mate. In addition, we should also mentioned that the graphics are obtained by method of numerical solution in Mathematica.

\begin{figure}[H] 
	\centering
	\subfloat[]{%
		\includegraphics[width=0.31\textwidth,keepaspectratio]{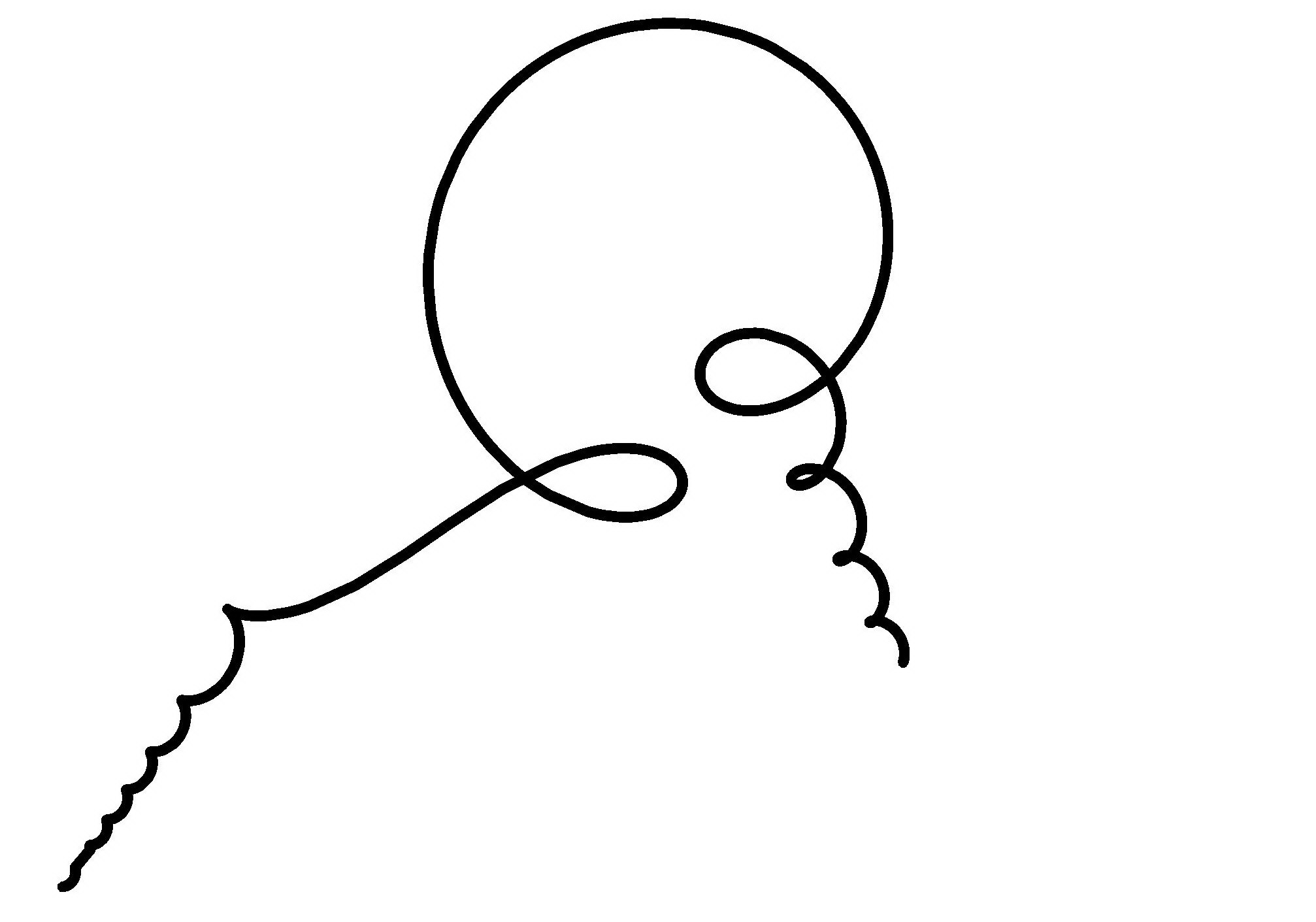}
		\label{fig1a}}
	\quad
	\subfloat[]{%
		\includegraphics[width=0.31\textwidth,keepaspectratio]{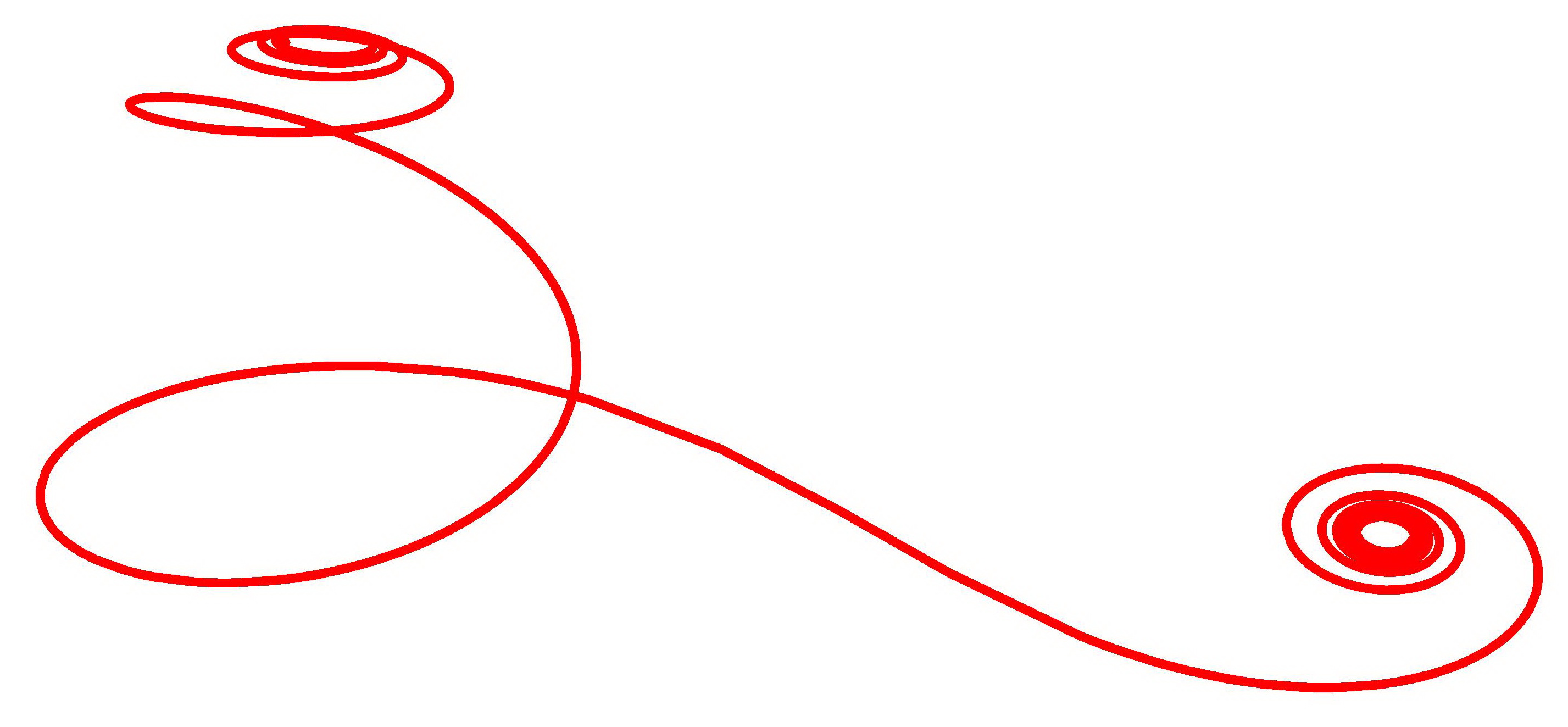}
		\label{fig1b}}
	\quad
	\subfloat[]{%
		\includegraphics[width=0.20\textwidth,keepaspectratio]{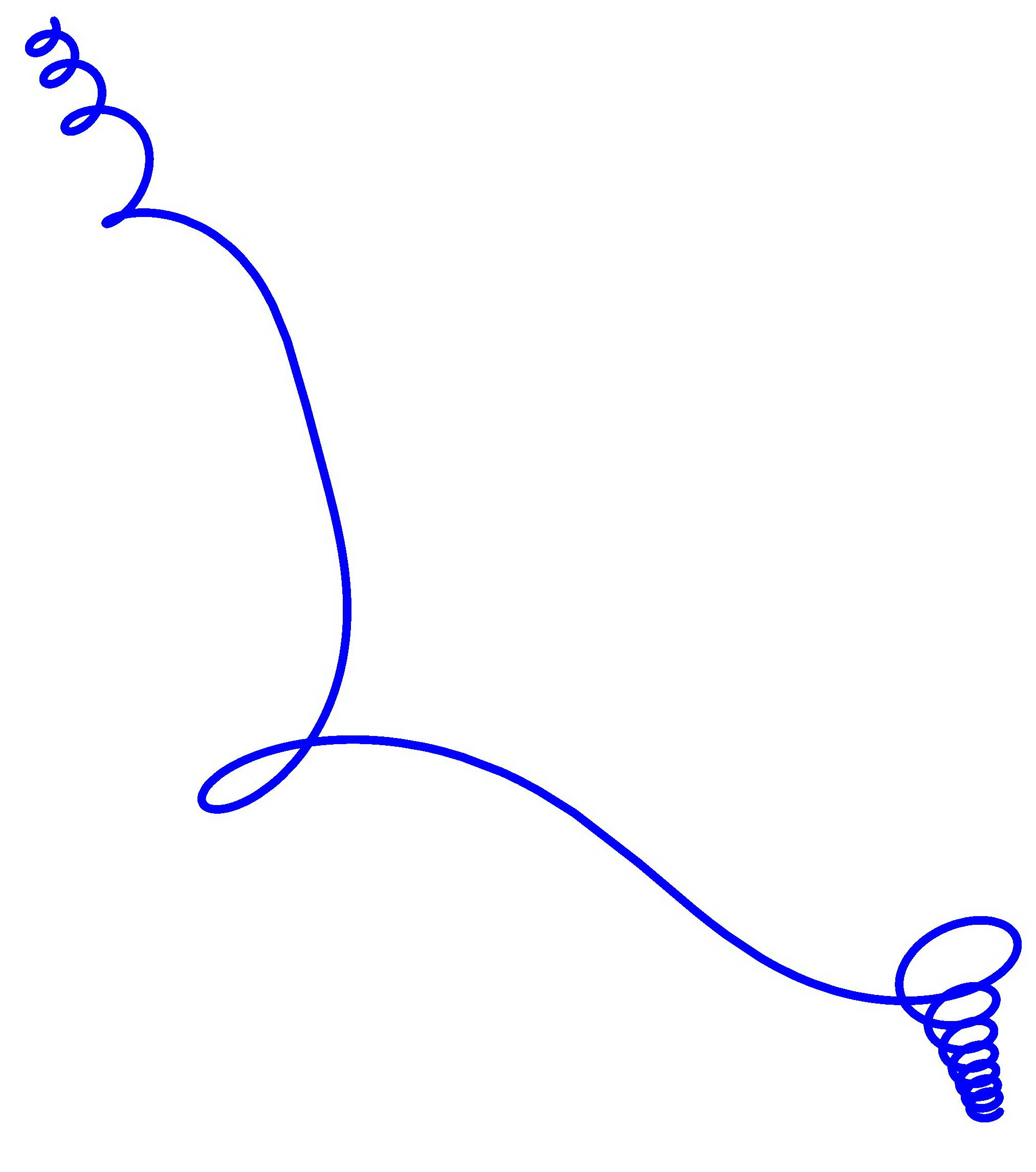}
		\label{fig1c}}
	\caption{\textbf{(a)} rectifying curve $ \gamma$ with curvature $ \kappa (s) = s - 1$ and torsion  $ \tau(s) = {s^2} + s - 2 $ ~\textbf{(b)} its natural mate $ \beta $ with $ \overline \kappa (s) = \sqrt {{{\left( {s - 1} \right)}^2}\left( {{s^2} + 4s + 5} \right)}  $ and $ \overline \tau (s) = 1/\left( {{s^2} + 4s + 5} \right)$~\textbf{(c)} its conjugate mate $ \gamma^*$ with $ {\kappa^*}(s) = \left| {{s^2} + s - 2} \right| $ and $ {\tau^*}(s) = s - 1 $.}
\end{figure}

\begin{figure}[H] 
	\centering
	\subfloat[]{%
		\includegraphics[width=0.13\textwidth,keepaspectratio]{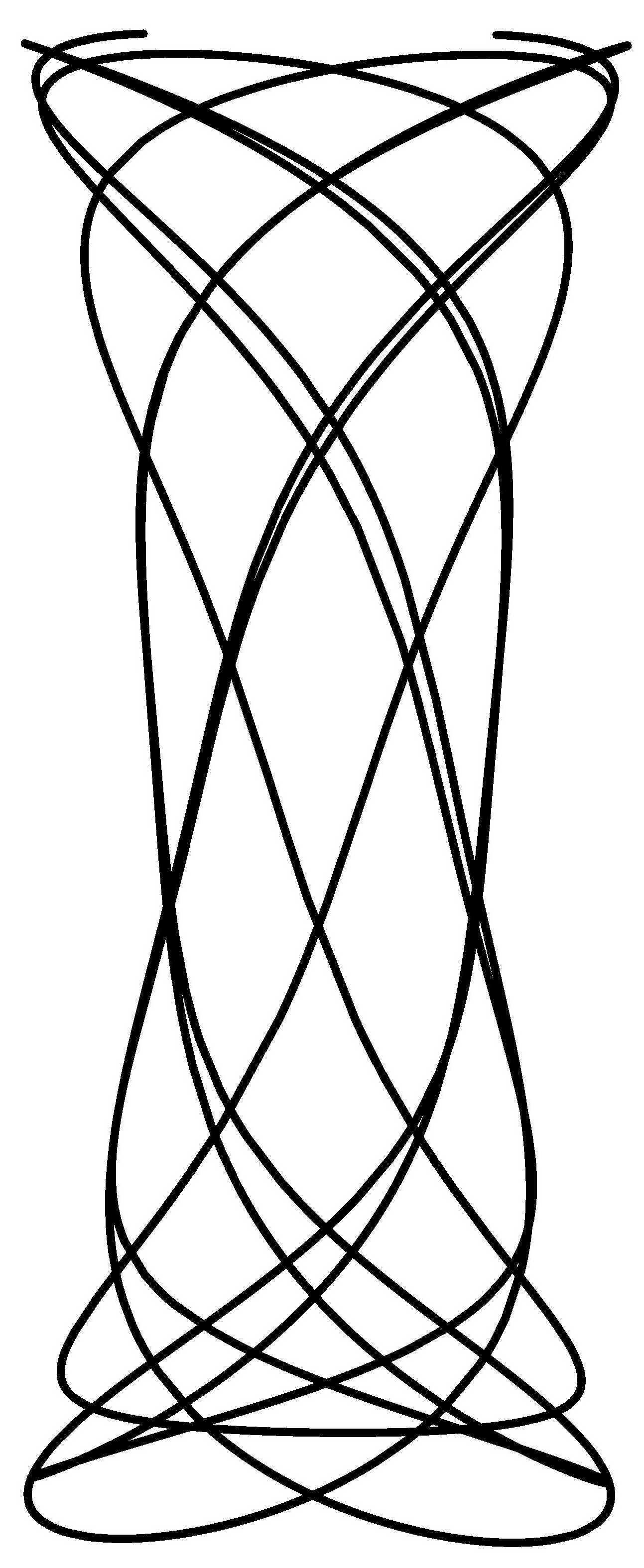}
		\label{fig2a}}
	\quad
	\subfloat[]{%
		\includegraphics[width=0.35\textwidth,keepaspectratio]{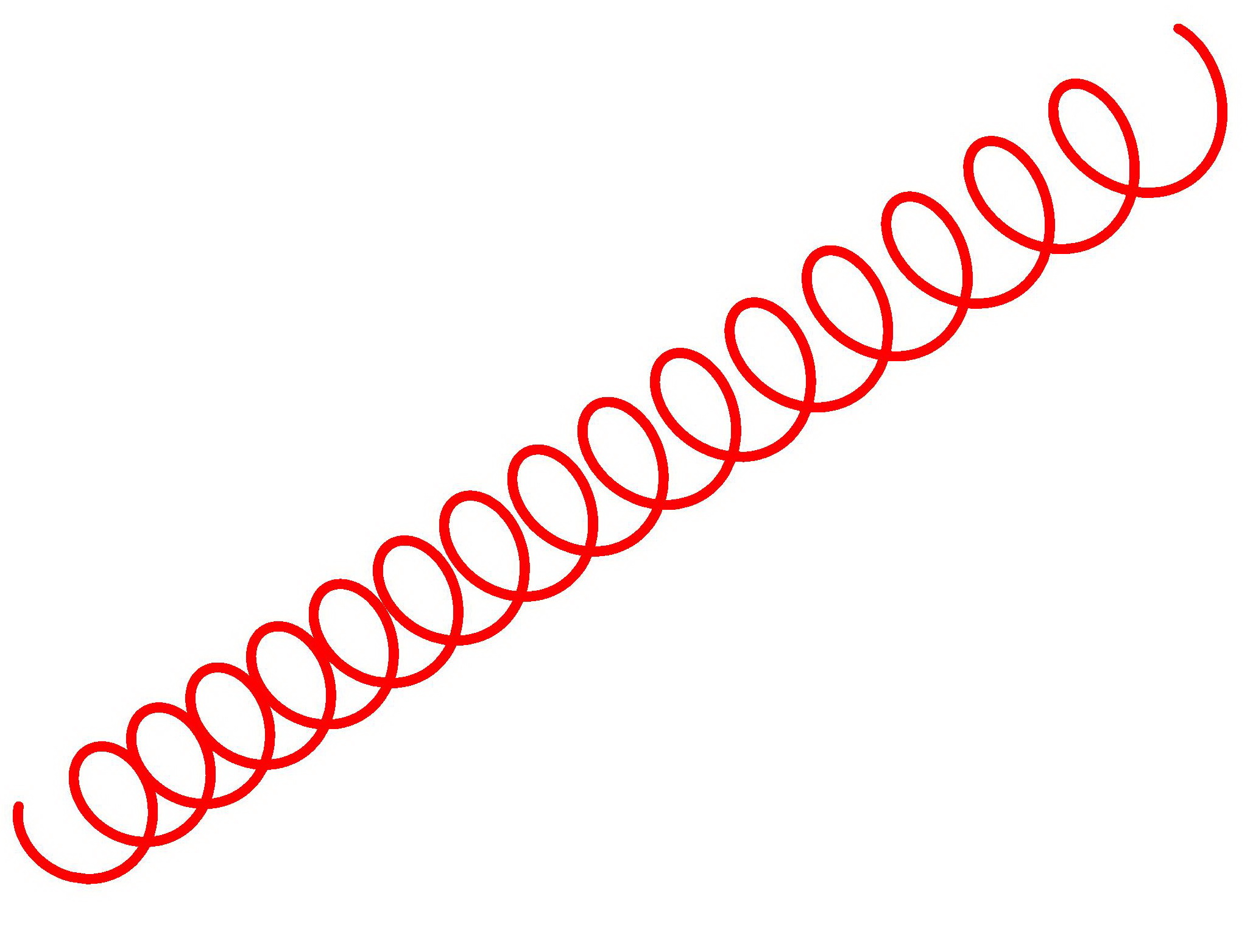}
		\label{fig2b}}
	\quad
	\subfloat[]{%
		\includegraphics[width=0.13\textwidth,keepaspectratio]{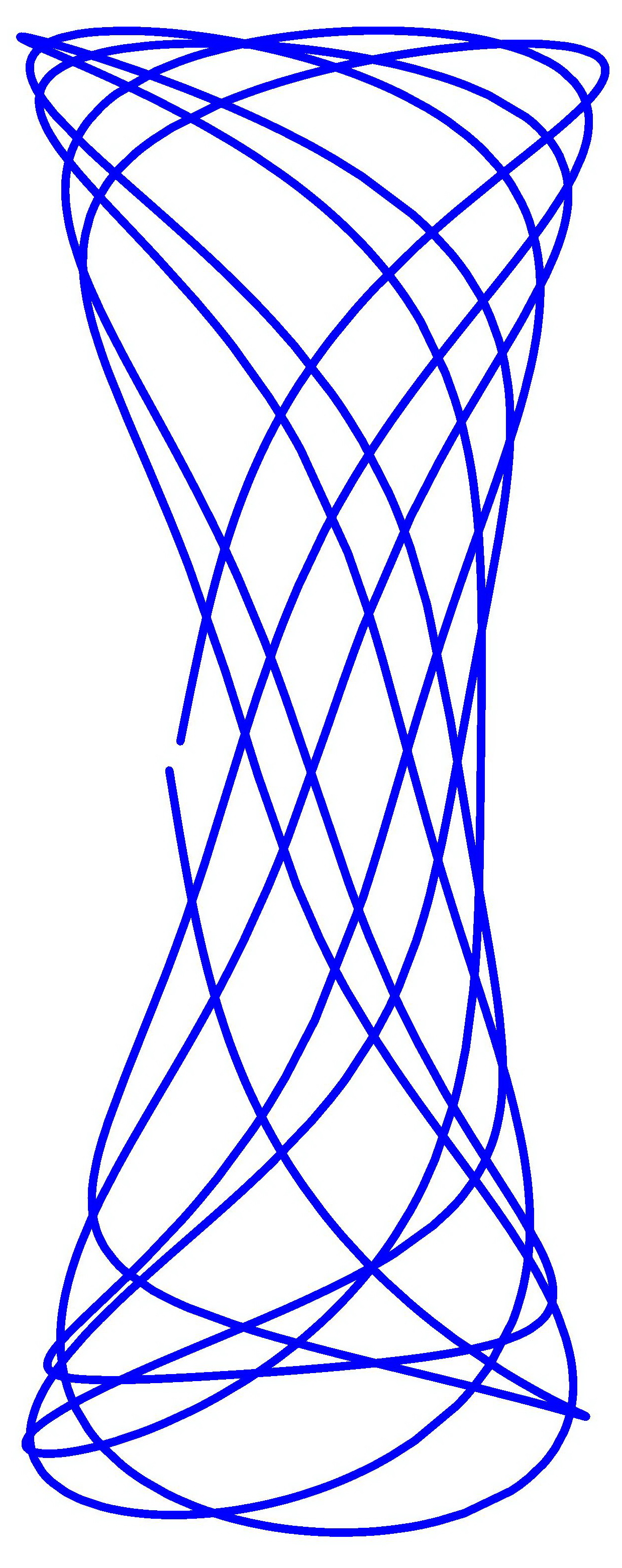}
		\label{fig2c}}
	\caption{\textbf{(a)} slant helix $ \gamma $ with $ \kappa (s) = 3 \cos(s)$ and $ \tau(s) = 3 \sin(s)$~\textbf{(b)} general helix natural mate $ \beta $ with $ \overline \kappa (s) = 3 $ and $ \overline \tau (s) = 1$~\textbf{(c)} slant helix conjugate mate $ \gamma^*$  with $ {\kappa^*}(s) = \left| 3 \sin(s) \right| $ and $ {\tau^*}(s) = 3 \cos(s) $.}
\end{figure}

\begin{figure}[H] 
	\centering
	\subfloat[]{%
		\includegraphics[width=0.26\textwidth,keepaspectratio]{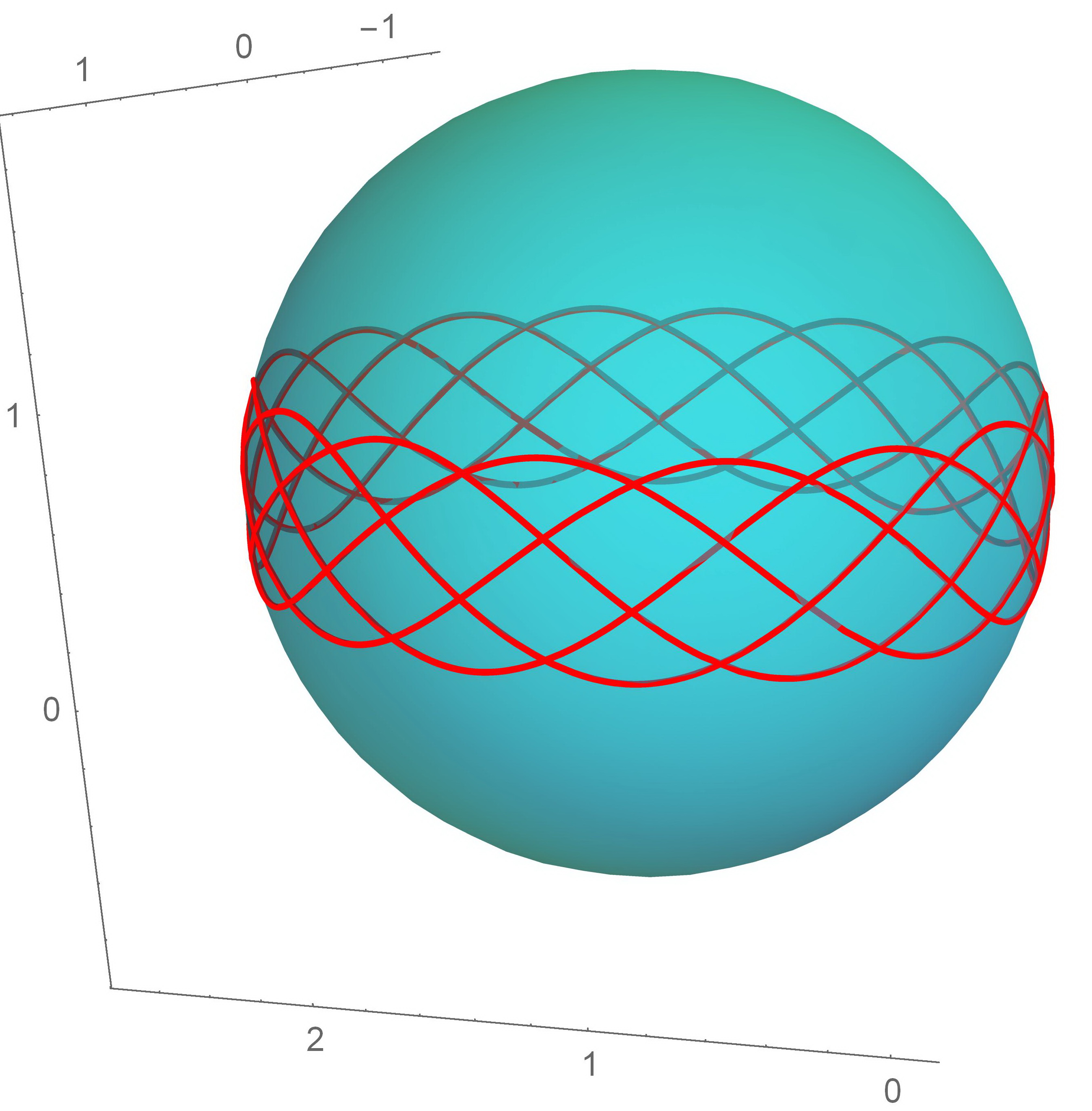}
		\label{fig3a}}
	\quad
	\subfloat[]{%
		\includegraphics[width=0.28\textwidth,keepaspectratio]{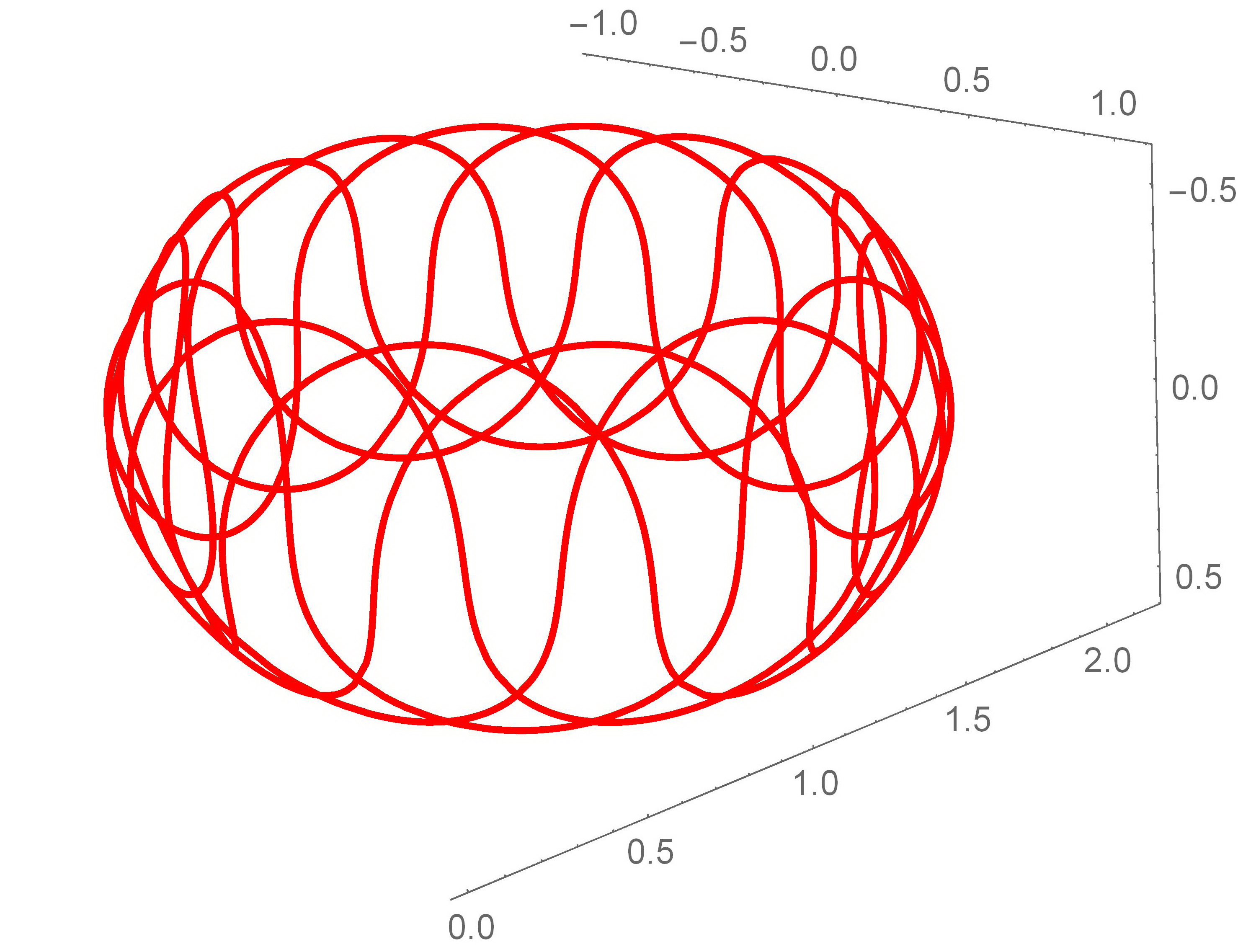}
		\label{fig3b}}
	\quad
	\subfloat[]{%
		\includegraphics[width=0.27\textwidth,keepaspectratio]{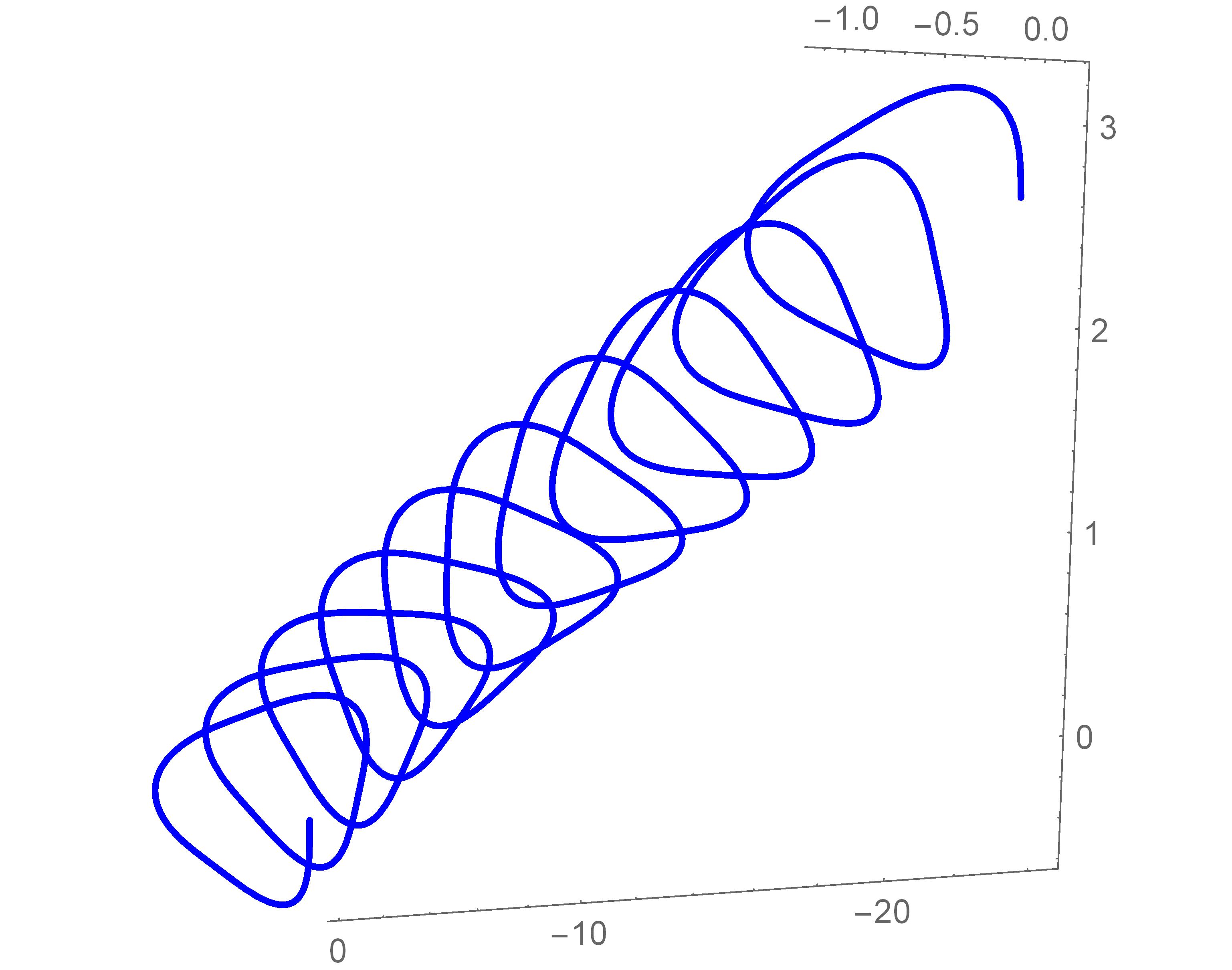}
		\label{fig3c}}
	\caption{\textbf{(a)} spherical curve $ \gamma$  with $ \kappa (s) = 2{\left( {1 + 7{{\sin }^2}(2s)} \right)^{-1/2}} $ and $ \tau(s) = 2\sqrt 7 \sin (2s){\left( {1 + 7{{\sin }^2}(2s)} \right)^{ - 1/2}}$ ~\textbf{(b)} Salkowski natural mate $ \beta $  with $ \overline \kappa (s) = 2 $ and $ \overline \tau (s) = \left( {4\sqrt 7 \cos (2s)} \right)/\left( {9 - 7\cos (4s)} \right) $~\textbf{(c)} conjugate mate $ \gamma^*$  with $ {\kappa^*}(s) = \left| {2\sqrt 7 \sin (2s){{\left( {1 + 7{{\sin }^2}(2s)} \right)}^{ - 1/2}}} \right|$ and $ {\tau^*}(s) = 2{\left( {1 + 7{{\sin }^2}(2s)} \right)^{ - 1/2}}$.}
\end{figure}

\begin{figure}[H] 
	\centering
	\subfloat[]{%
		\includegraphics[width=0.08\textwidth,keepaspectratio]{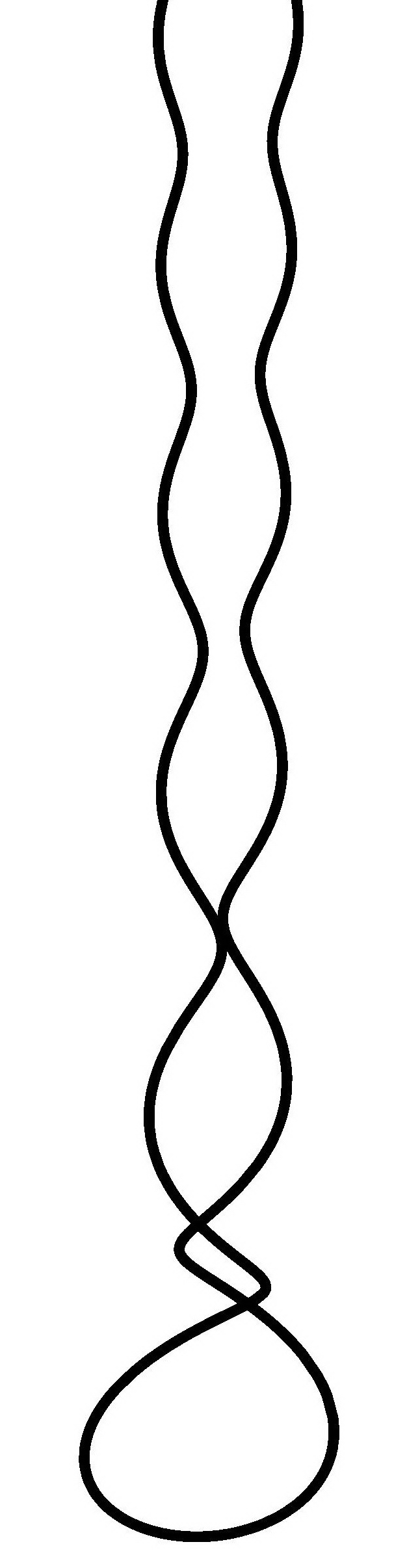}
		\label{fig4a}}
	\quad
	\subfloat[]{%
		\includegraphics[width=0.33\textwidth,keepaspectratio]{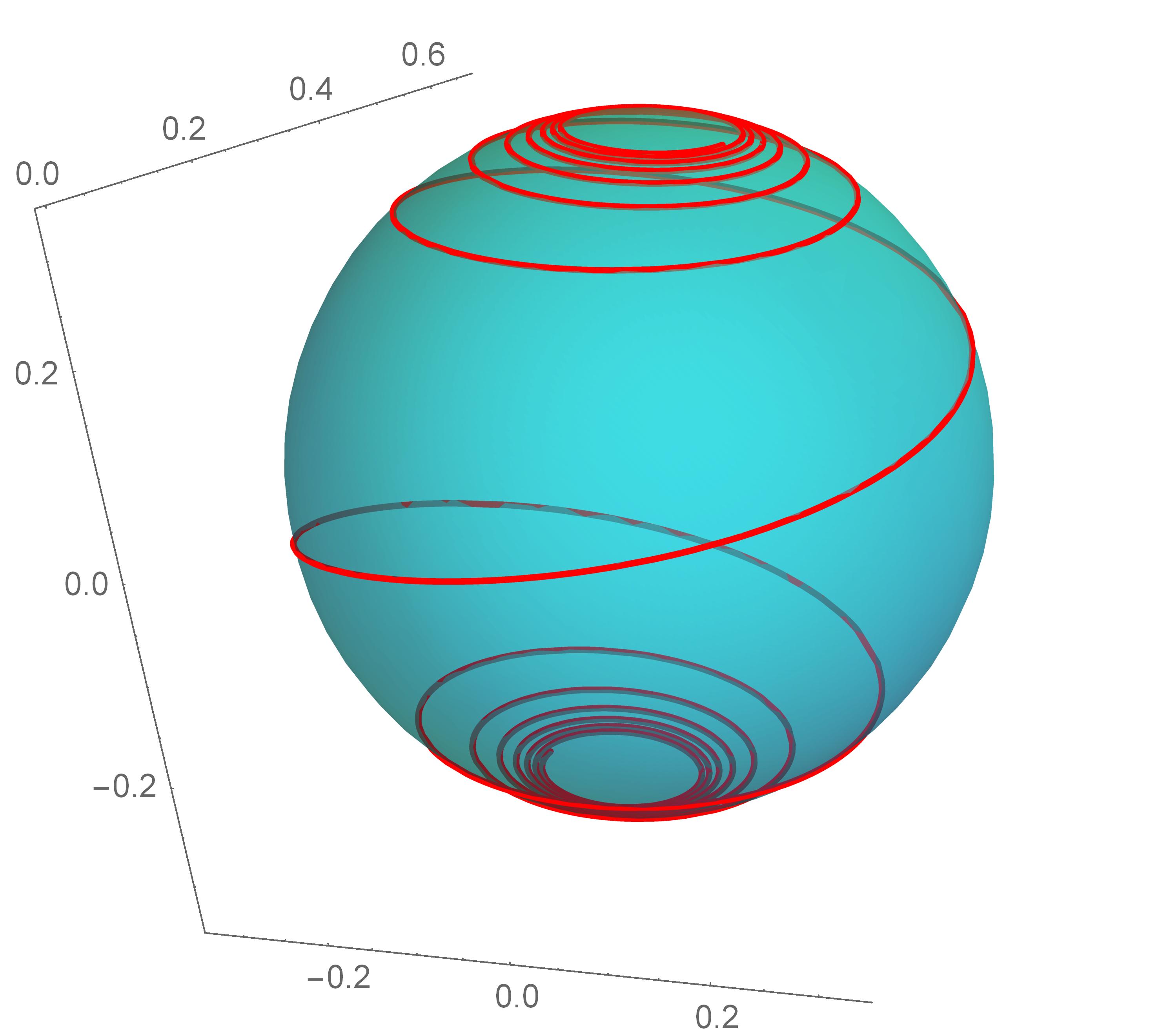}
		\label{fig4b}}
	\quad
	\subfloat[]{%
		\includegraphics[width=0.20\textwidth,keepaspectratio]{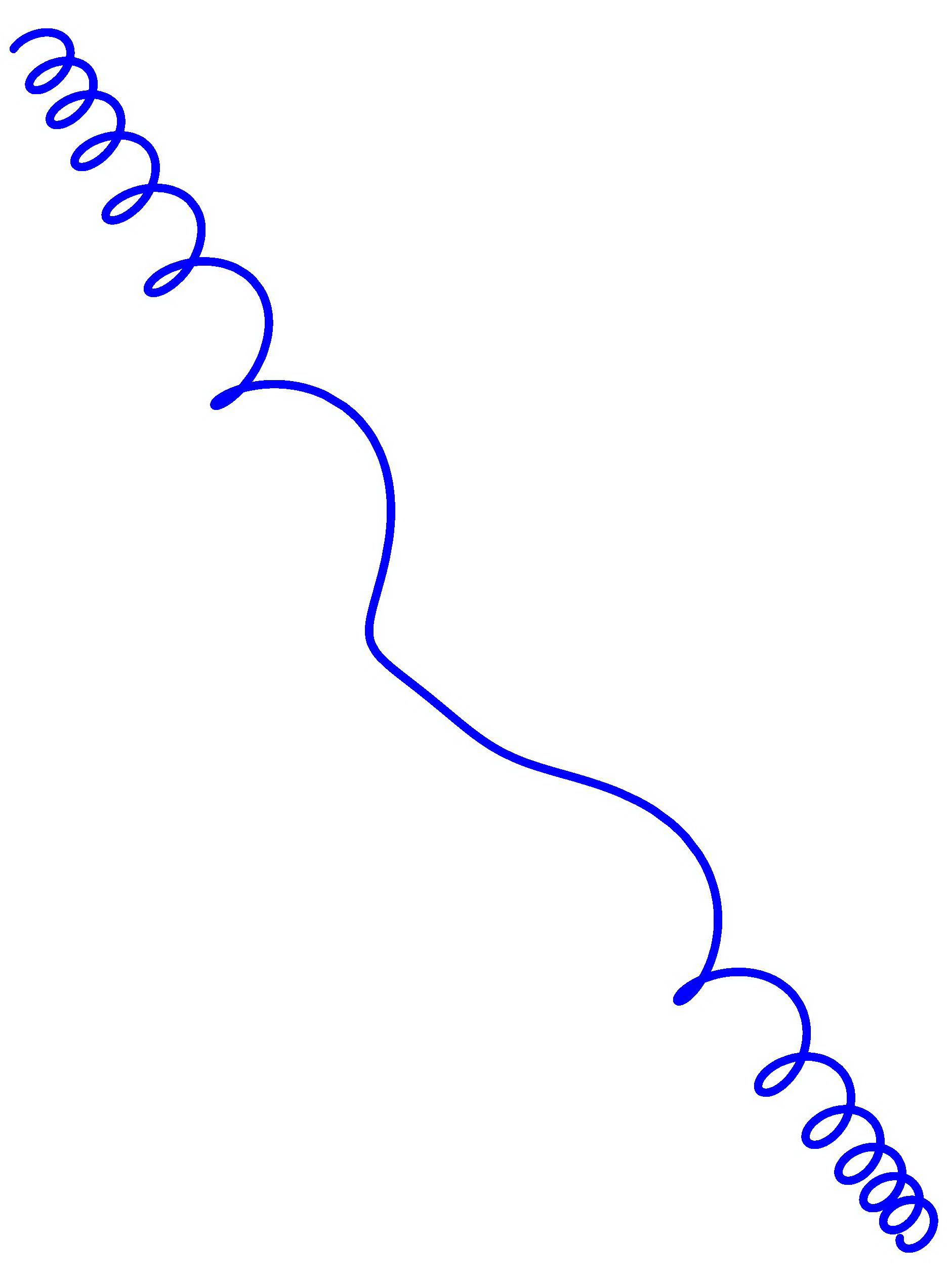}
		\label{fig4c}}
	\caption{\textbf{(a)} Salkowski curve $ \gamma$ with $ \kappa (s) = 3 $ and $ \tau(s) = 2s$ ~\textbf{(b)} spherical natural mate $ \beta $ with $ \overline \kappa (s) = \sqrt{9+4s^2} $ and $ \overline \tau (s) = {6}/{(9+4s^2)} $~\textbf{(c)} anti-Salkowski conjugate mate $ \gamma^*$ with $ {\kappa^*}(s) = \left| 2s \right| $ and $ {\tau^*}(s) = 3 $.}
\end{figure}

\begin{figure}[H] 
	\centering
	\subfloat[]{%
		\includegraphics[width=0.30\textwidth,keepaspectratio]{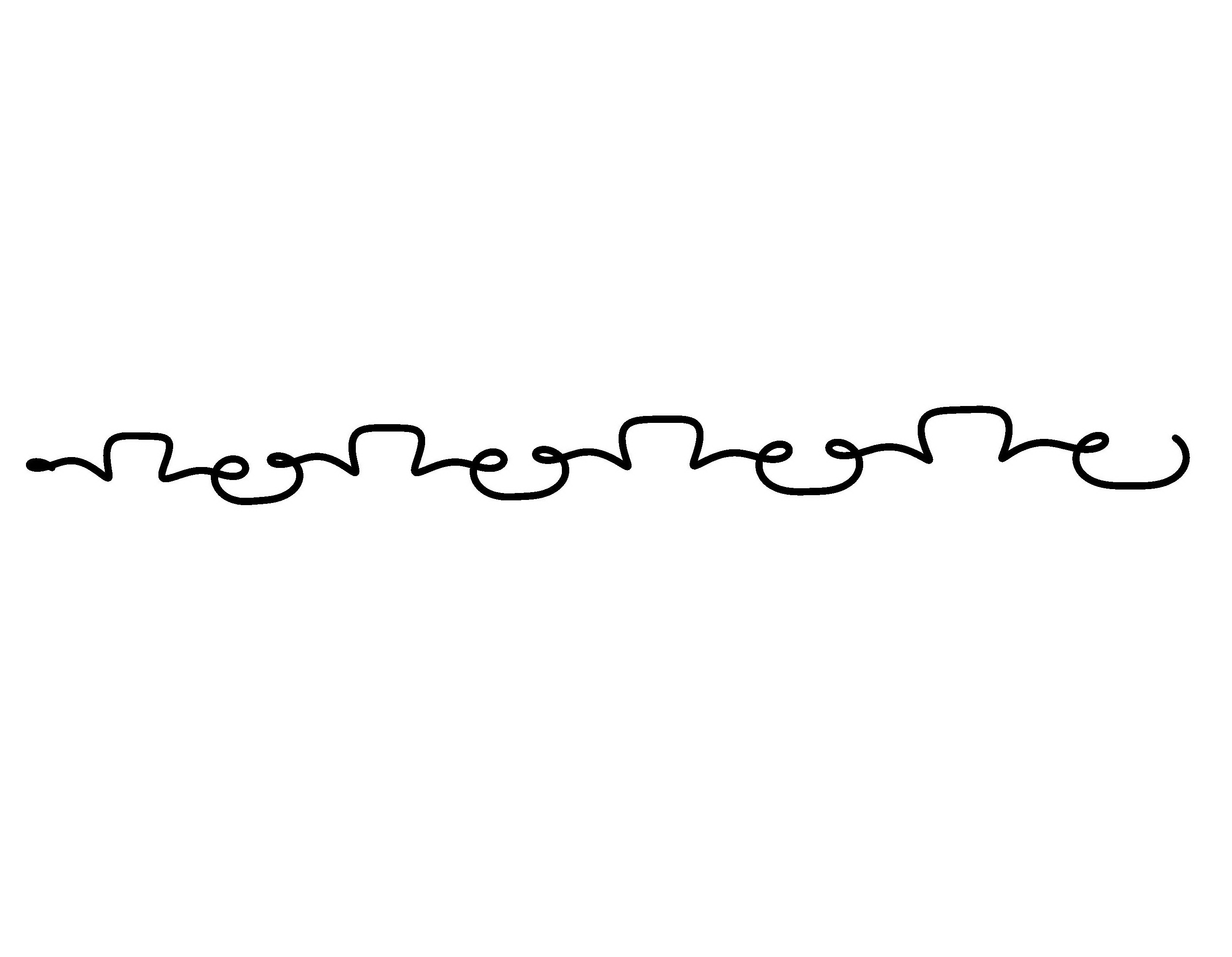}
		\label{fig5a}}
	\quad
	\subfloat[]{%
		\includegraphics[width=0.31\textwidth,keepaspectratio]{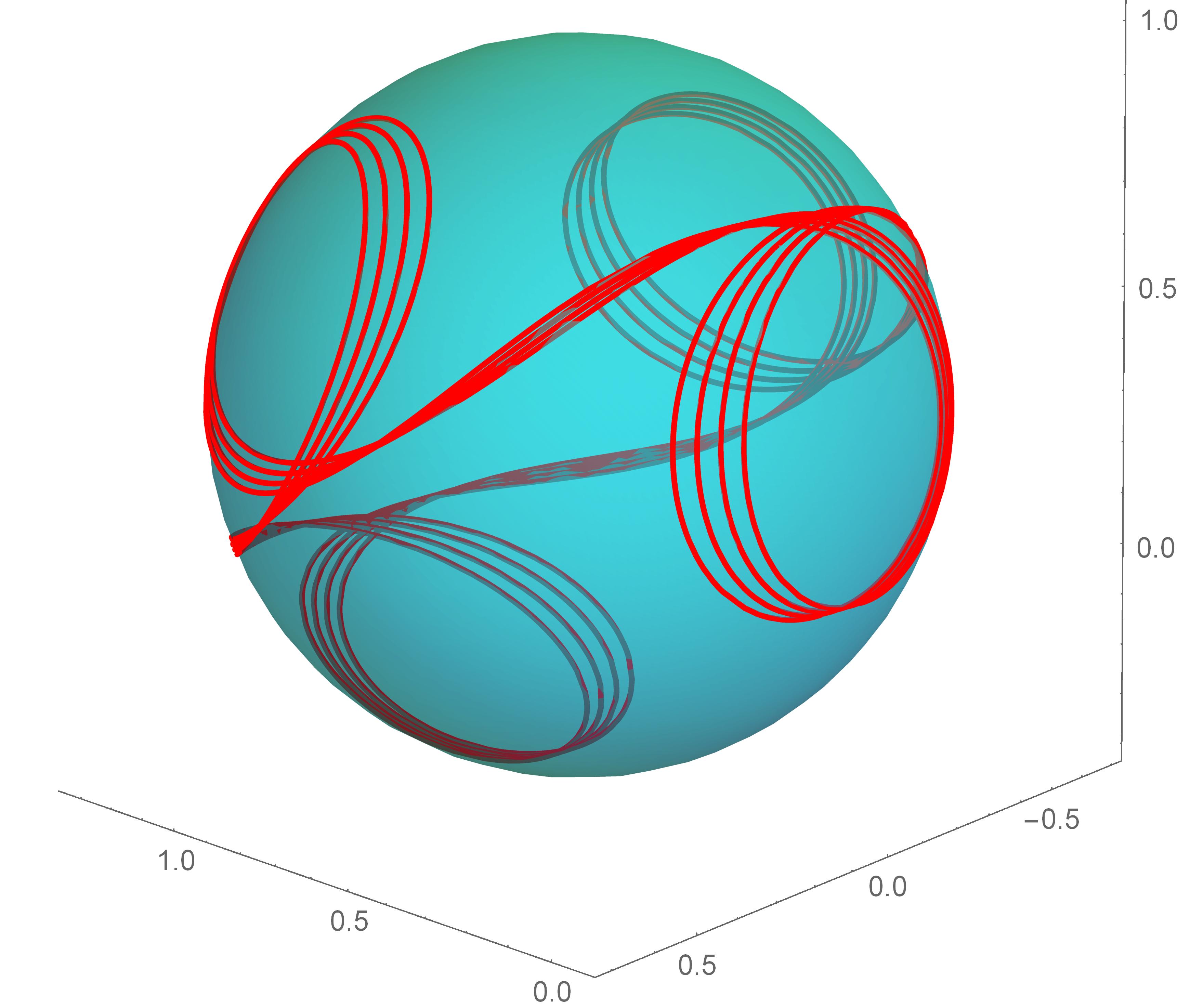}
		\label{fig5b}}
	\quad
	\subfloat[]{%
		\includegraphics[width=0.30\textwidth,keepaspectratio]{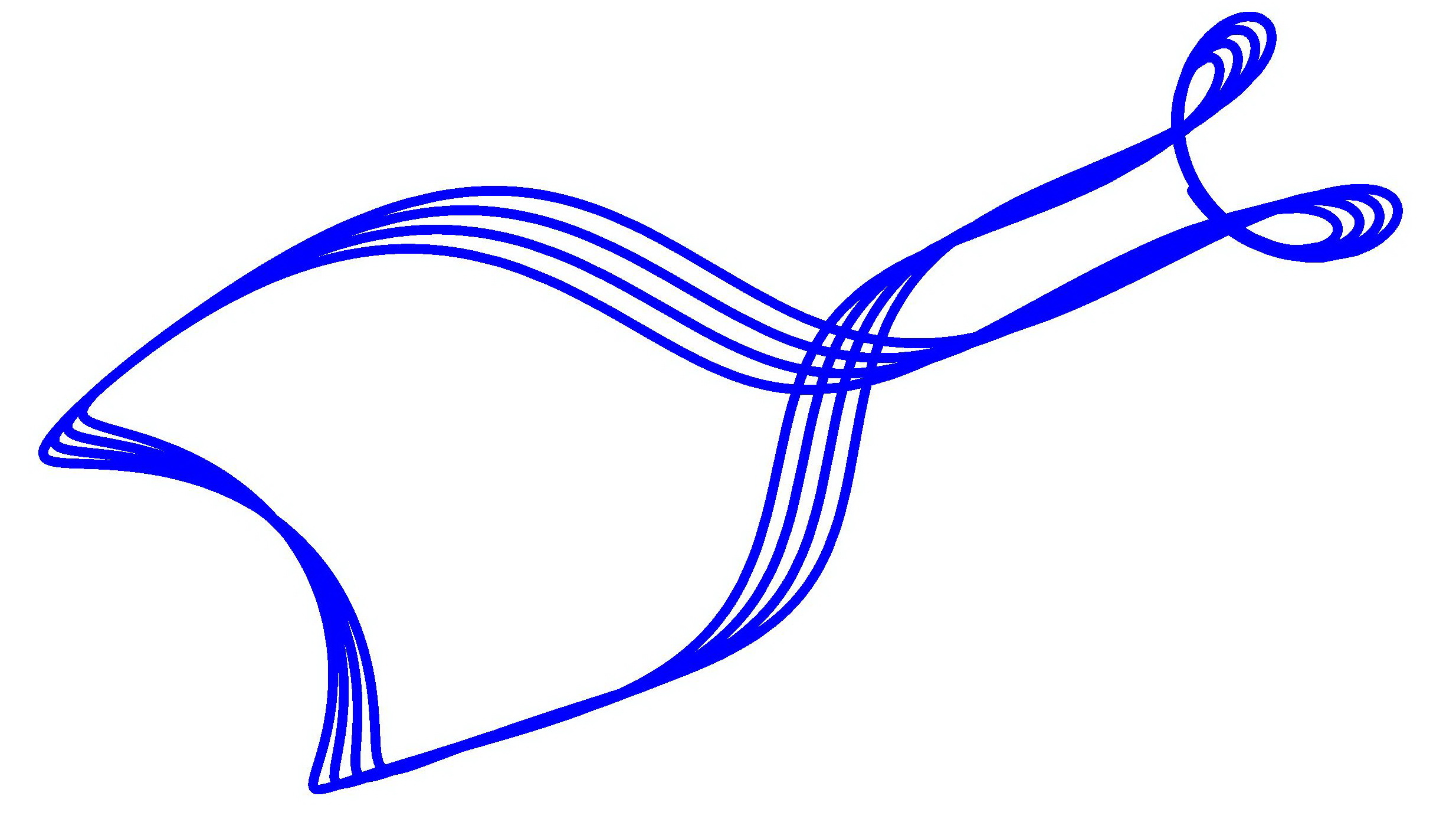}
		\label{fig5c}}
	\caption{\textbf{(a)} anti-Salkowski curve $ \gamma $ with $ \kappa (s) = 3 \cos(s)  $ and $ \tau(s) = \sqrt{2} $~\textbf{(b)} spherical natural mate $ \beta $ with $ \overline \kappa (s) = \sqrt{2+9s^2}$ and $ \overline \tau (s) = {3\sqrt{2}\sin(s)}/({2+9{\cos^2(s)}}) $~\textbf{(c)} Salkowski conjugate mate $ \gamma^*$ with $ {\kappa^*}(s) = \sqrt{2} $ and $ {\tau^*}(s) = 3 \cos(s)$.}
\end{figure}


\end{document}